\documentclass{scrartcl}

\usepackage{amsmath,amsfonts,amsthm,amssymb}%
\usepackage[linesnumbered,vlined,boxed,fillcomment]{algorithm2e}
\SetKw{In}{ in }
\SetKw{And}{ and }
\SetKw{All}{ all }
\SetKw{Or}{ or }
\SetKw{Break}{break}
\SetKw{Continue}{continue}
\SetKw{KwFrom}{from }
\SetKwInput{KwIn}{In}
\SetKwInput{KwOut}{Out}
\makeatletter
\let\oldnl\nl
\newcommand{\nonl}{\renewcommand{\nl}{\let\nl\oldnl}}
\makeatother

\usepackage{enumerate}%
\usepackage{tikz}%
\usepackage{subcaption}

\usepackage[numbers,sort&compress]{natbib}

\usepackage{hyperref}%
\hypersetup{
  colorlinks=true,
  breaklinks=true,
  urlcolor= blue,
  linkcolor= blue,
  bookmarksopen=true,
  pdftitle={fast Puiseux},
  pdfauthor={Adrien Poteaux},
}


\newcommand{\hbb}[1]{\ensuremath{\mathbb{#1}}}
\newcommand{\Ai}{\hbb{A}}

\newcommand{\Fi}{\hbb{F}}
\newcommand{\Ki}{\hbb{K}}
\newcommand{\Ni}{\hbb{N}}
\newcommand{\Qi}{\hbb{Q}}
\newcommand{\Ri}{\hbb{R}}
\newcommand{\Zi}{\hbb{Z}}

\newcommand{\hcal}[1]{\ensuremath{\mathcal{#1}}}

\newcommand{\Nc}{\hcal{N}}
\newcommand{\Oc}{\hcal{O}}
\newcommand{\Rc}{\hcal{R}}

\newcommand{\Ns}{\Nc^{\star}}
\newcommand{\Nn}[1][n]{\Nc_{#1}}


\theoremstyle{plain}
\newtheorem{thm}{Theorem}
\newtheorem{prop}{Proposition}
\newtheorem{cor}{Corollary}
\newtheorem{lem}{Lemma}
\theoremstyle{definition}
\newtheorem{dfn}{Definition}
\newtheorem{idea}{Idea}
\newtheorem{xmp}{Example}
\theoremstyle{remark}
\newtheorem{rem}{Remark}

\newcommand{\arnp}{\textnormal{\texttt{Half-RNP}}}
\newcommand{\hrnp}{\textnormal{\texttt{Half-RNP3}}}
\newcommand{\wrnp}{\textnormal{\texttt{MonicRNP}}}
\newcommand{\rnp}{\textnormal{\texttt{RNP}}}
\newcommand{\wdrnp}{\textnormal{\texttt{MonicRNP3}}}
\newcommand{\drnp}{\textnormal{\texttt{RNP3}}}
\newcommand{\wpt}{\textnormal{\texttt{WPT}}}
\newcommand{\quorem}{\textnormal{\texttt{QuoRem}}}
\newcommand{\quo}{\textnormal{\texttt{Quo}}}
\newcommand{\hensel}{\textnormal{\texttt{Hensel}}}
\newcommand{\onestep}{\textnormal{\texttt{HenselStep}}}
\newcommand{\monic}{\textnormal{\texttt{Monic}}}
\newcommand{\bezout}{\textnormal{\texttt{B\'ezout}}}
\newcommand{\factor}{\textnormal{\texttt{Factor}}}
\newcommand{\sqrfree}{\textnormal{\texttt{SQR-Free}}}
\newcommand{\PrimElt}{\textnormal{\texttt{Primitive}}}
\newcommand{\PrimEltB}{\textnormal{\texttt{BivTrigSet}}}
\newcommand{\RNP}{\textnormal{\texttt{RNPuiseux}}}
\newcommand{\NewPol}{\textnormal{\texttt{Polygon-Data}}}
\newcommand{\norm}{\textnormal{\texttt{NormRPE}}}
\newcommand{\Split}{\textnormal{\texttt{Split}}}
\newcommand{\ReducePol}{\textnormal{\texttt{ReducePol}}}
\newcommand{\rmCP}{\textnormal{\texttt{removeCriticalPairs}}}
\newcommand{\desing}{\textnormal{\texttt{Desingularise}}}


%
\newcommand{\assign}{{\;\leftarrow{}\;}}%

\renewcommand{\O}{\textrm{\Oc}}
\newcommand{\Ot}{\O\tilde\,\,}
\newcommand{\M}{\textup{\textsf{M}}}
\newcommand{\An}{\textup{\textsf{A}}_s}
\newcommand{\dt}{D}
\newcommand{\dx}{{d_X}}
\newcommand{\dy}{{d_Y}}
\newcommand{\height}[1]{\mbox{\rmfamily\upshape ht}(#1)}
\newcommand{\Mg}{M_{\gamma}}
\newcommand{\dg}{d_{P}}


\newcommand{\res}[1][Y]{\text{Res}_{#1}}
\newcommand\disc[1][Y]{{\text{Disc}_{#1}}} 
\newcommand{\val}[1][X]{\ensuremath{\upsilon_{#1}}}
\newcommand{\vF}[1][]{{\ensuremath{\delta_{#1}}}}
\newcommand{\vRF}[1][]{\vF[#1]}
\newcommand{\vinf}[1][F]{\ensuremath{v}}

\newcommand{\eps}{\epsilon}
\newcommand{\lc}[2]{\mbox{\rmfamily lc}_{#1}(#2)}
\newcommand{\algclos}[1]{\overline{#1}}
\newcommand{\tronc}[2]{{\lceil #1 \rceil}^{#2}}
\newcommand{\kgh}{\kappa}
\newcommand{\Gh}{\widehat{G}}
\newcommand{\Gt}{\tilde{G}}
\newcommand{\Ht}{\tilde{H}}
\newcommand{\Ut}{\tilde{U}}
\newcommand{\Vt}{\tilde{V}}
\newcommand{\Ft}{\tilde{F}}
\newcommand{\edge}[3]{#1\,a+#2\,b=#3}
\newcommand{\RPE}{RPE}
\newcommand{\Z}{\underline Z}





\setlength{\parindent}{0pt}
\setlength{\parskip}{2mm}

\begin{document}

\title{Computing Puiseux series: a fast divide and conquer algorithm}

\author{%
  Adrien Poteaux,\\%
  {CRIStAL, Universit\'e de Lille}\\%
  {UMR CNRS 9189, B\^atiment M3}\\%
  {59655 Villeneuve d'Ascq, France}\\%
  \texttt{adrien.poteaux@univ-lille.fr}
  \and Martin Weimann\\%
  {GAATI\footnote{Current delegation. Permanent position at LMNO,
     University of Caen-Normandie, BP 5186, 14032 Caen Cedex,
     France.}
, Universit\'e de Polyn\'esie Fran\c{c}aise}\\%
  {UMR CNRS 6139, BP 6570}\\%
  {98702 Faa'a, Polyn\'esie Fran\c{c}aise}\\%
  \texttt{martin.weimann@upf.pf}%
}

\maketitle
\begin{abstract}
  \noindent
  Let $F\in \Ki[X,Y]$ be a polynomial of total degree $\dt$ defined
  over a perfect field $\Ki$ of characteristic zero or greater than
  $\dt$. Assuming $F$ separable with respect to $Y$, we provide an
  algorithm that computes all Puiseux series of $F$ above
  $X=0$ in less than $\Ot(\dt\,\vRF)$ operations in $\Ki$, where
  $\vRF$ is the valuation of the resultant of $F$ and its partial
  derivative with respect to $Y$. To this aim, we use a divide and
  conquer strategy and replace univariate factorisation by dynamic
  evaluation. As a first main corollary, we compute the irreducible
  factors of $F$ in $\Ki[[X]][Y]$ up to an arbitrary precision $X^N$
  with $\Ot(\dt(\vRF+N))$ arithmetic operations. As a second main
  corollary, we compute the genus of the plane curve defined by $F$
  with $\Ot(\dt^3)$ arithmetic operations and, if $\Ki=\Qi$, with
  $\Ot((h+1)\dt^3)$ bit operations using probabilistic algorithms,
  where $h$ is the logarithmic height of $F$.
\end{abstract}


\section{Introduction.}
\label{sec:intro}
This paper provides complexity results for computing Puiseux series of
a bivariate polynomial with coefficients over a perfect field of
characteristic zero or big enough.
\paragraph{Context and main results.} In this paper, $\Ki$ denotes a
perfect field (e.g. $\Ki$ iss a finite or number field), $p$ its
characteristic, $X$ and $Y$ two indeterminates over $\Ki$ and
$F\in\Ki[X,Y]$ a bivariate polynomial primitive and separable in $Y$.
We denote $\dt$ the total degree of $F$, $\dx=\deg_X(F)$ and
$\dy=\deg_Y(F)$; we always assume $p=0$ or $p>\dy$. Let $\algclos\Ki$
be the algebraic closure of $\Ki$ and $\vRF=\val(R_F)$ the
$X$-valuation of the resultant $R_F=\res(F,F_Y)$ of $F$ and its
$Y$-derivative $F_Y$. With our assumption on $p$, the Puiseux theorem
states that for any $x_0\in \algclos{\Ki}$, the roots of $F$ (viewed
as a univariate polynomial in $Y$) may be expressed as fractional
Laurent power series in $(X-x_0)$ with coefficients in
$\algclos{\Ki}$. These are the (classical) \emph{Puiseux
  series}\footnote{terms written \textit{in italics} in this
  introduction are defined in Section \ref{sec:defs} or
  \ref{ssec:trigsets}.} of $F$ above $x_0$, fundamental objects of the
theory of algebraic curves \cite{Wa50,BrKn86}. Many applications are
given in \cite{PoRy15,PoRy12}.

For the computation of \emph{singular parts} of Puiseux series (that
contain the relevant information about the singularities of the
associated curve; remaining terms can be computed up to an arbitrary
precision in quasi-linear time via Newton iterations), we get:

\begin{thm}\label{thm:puidV}%
  There exists an algorithm\footnote{our algorithms are Las Vegas, due
    to the computation of primitive elements; they should become
    deterministic via the preprint \cite{HoLe18}. See Remark
    \ref{rem:deterministic} and Sections \ref{ssec:previous} and
    \ref{ssec:compD5}} that computes singular parts of Puiseux series
  of $F$ above $x_0=0$ in an expected $\Ot(\dy\,\vRF)$ arithmetic
  operations over $\Ki$.
\end{thm}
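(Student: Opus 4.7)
The plan is to adapt the classical rational Newton--Puiseux algorithm \RNP{} in a divide-and-conquer fashion so that the global cost becomes linear in $\dy$ instead of quadratic. At each node of the recursion I would start from a local polynomial, compute its Newton polygon via \Npol{}, and for each edge of slope $-q/p$ form the characteristic polynomial $\phi$ of the edge. In the classical approach one factors $\phi$ over $\Ki$ to separate the roots $\xi$ and then recurses on each branch after the ramification $X\to X^p$, $Y\to X^q(Y+\xi)$. The first replacement is to substitute univariate factorisation by dynamic evaluation (D5): work with the squarefree factorisation of $\phi$, extend scalars by $\Ki[Z]/\phi_i(Z)$, and split only when a non-invertible element is detected. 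This removes the irreducible factorisation cost at every node while preserving the recursion structure.

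Next, rather than iterating through edges and roots one by one, I would use the \balancedfacto{} / \firstfactors{} subroutines to produce, in a single step, a balanced analytic factorisation $F \equiv F_1\cdots F_s \bmod X^N$ with $\sum_i \deg_Y F_i=\dy$, where the $F_i$ correspond to clusters of Puiseux series sharing a common initial segment. Each $F_i$ is processed recursively. The precision $N$ at every node is chosen adaptively: it is essentially the local contribution $\delta_{\mathrm{loc}}$ of the restricted resultant, which is the minimal precision needed to separate the clusters through the Newton polygon. With this choice, power series arithmetic, Euclidean tools, and the Newton polygon computation at each node all cost $\Ot(\dy_{\mathrm{loc}}\,\delta_{\mathrm{loc}})$.

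The complexity accounting then rests on the classical additivity
\begin{equation*}
  \vRF \;=\; \sum_i \val \res(F_i,F_{i,Y}) \;+\; 2\sum_{i<j} \val \res(F_i,F_j),
\end{equation*}
applied iteratively along the recursion tree. I would show that $\sum_v \dy_v\,\delta_v \le \dy\,\vRF$, up to the logarithmic factors absorbed in $\Ot$, where $v$ ranges over the nodes of the tree and $\dy_v$, $\delta_v$ denote respectively the local $Y$-degree and the local resultant valuation at $v$. Combined with the quasilinear per-node cost, this yields the announced $\Ot(\dy\,\vRF)$ bound.

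The main obstacle is precisely this telescoping. Two difficulties coexist. First, the local precision $N$ used at each node must be tight enough for the node cost to remain within $\Ot(\dy_{\mathrm{loc}}\,\delta_{\mathrm{loc}})$, which requires proving that truncating $F$ modulo $X^N$ does not alter the singular Puiseux data being computed at that level; this is the certification step that legitimises working with analytic truncations rather than with exact polynomials. Second, the Las Vegas aspect enters through the primitive element computations needed after each dynamic evaluation splitting; one must check that these randomised steps do not destroy the balance, so that the expected per-node cost stays quasilinear and the global $\dy\cdot\vRF$ bound survives averaging.
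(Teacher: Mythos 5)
Your skeleton — a divide-and-conquer on the analytic factorisation with dynamic evaluation replacing univariate factorisation and a telescoping cost argument — is the right shape and overlaps Ideas \ref{id:fact}--\ref{id:D5}. But the one technical lever the paper's proof actually turns on is missing: how to \emph{produce} the split at each node within the claimed cost. You invoke \balancedfacto{}/\firstfactors{} to obtain a balanced multi-factor decomposition $F\equiv F_1\cdots F_s\bmod X^N$ with $N\approx \delta_{\mathrm{loc}}$, but such subroutines do not exist in the paper, and no argument is given that such a split is computable in $\Ot(\dy_{\mathrm{loc}}\delta_{\mathrm{loc}})$. Lifting a factorisation that is not coprime modulo $X$ (Lemma \ref{lem:hensel}, Proposition \ref{prop:factolift}) requires knowing it to precision $2\kgh+1$ before lifting can even start, where $\kgh$ is the maximal pairwise lifting order; for a degree-balanced split $\kgh$ is not controlled by $\vRF/\dy$ at all — in Example \ref{xmp:hensel-sharp}, $F=Y^d+(Y-X^{d/2})^2$, the two branches separate only at order $\Theta(\vRF)=\Theta(d^2)$. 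Moreover, running the Newton--Puiseux recursion with the full truncation $\vRF$ already costs $\Ot(\dy^2\vRF)$ at the root (Proposition \ref{prop:arnp-complexity}), and resultant additivity alone does not reduce this.

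The paper instead performs an \emph{asymmetric} two-way split designed precisely to bound the lifting order. Proposition \ref{prop:vi-V} shows that at least half of the Puiseux series (weighted by $e_if_i$) satisfy $v_i=\val(F_Y(S))<2\vRF/\dy$; by Corollary \ref{cor:Ni-vi} their singular parts are computable with truncation order $O(\vRF/\dy)$, and Theorem \ref{thm:half-Puiseux} says \arnp{} recovers them in $\Ot(\dy\vRF)$ after the careful certification of truncated Newton polygons and Puiseux transforms in Lemma \ref{lem:tronc}. Grouping these cheap expansions into a monic factor $G$ via \norm{} and letting $H=F/G$, Proposition \ref{prop:bound-kgh} and Corollary \ref{cor:bound-kgh} give $\kgh(G,H)<2\vRF/\dy$, so the B\'ezout relation of \cite{MoSc16} and the Hensel lift up to $X^{\vRF}$ also cost $\Ot(\dy\vRF)$; recursing on $H$ with $\deg_Y H\le\dy/2$ at the \emph{same} precision $\vRF$ halves the degree each level and gives the total by Lemma \ref{lem:M}. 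Without the half-series/low-truncation lemma and the lifting-order bound, your scheme has no mechanism to avoid paying $\Theta(\dy\vRF)$ per branch; with them, your telescoping is unnecessary, because degree-halving already suffices. You would also need the separate analysis for roots above $(0,\infty)$ (Proposition \ref{prop:prec-infty}) and the quasi-linear D5 subroutine bounds of Section \ref{ssec:compD5}, neither of which is a routine detail.
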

Here we use the classical $\Ot$ notation that omits logarithmic factors
(see Section \ref{ssec:comp}). This improves the bound
$\Ot(\dy^2\,\vRF)$ of \cite{PoRy15}. From that we deduce:
\begin{thm}\label{thm:puid3}
  There exists an algorithm that computes the singular part of Puiseux
  series of $F$ above \emph{all} critical points in an expected
  $\Ot(\dy^2\dx)\subset\Ot(\dt^3)$ arithmetic operations.
\end{thm}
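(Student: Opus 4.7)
The plan is to deduce Theorem \ref{thm:puid3} from Theorem \ref{thm:puidV} by aggregating the cost of the single-point algorithm over all critical points of $F$. Recall that a critical point is an $x_0\in\algclos{\Ki}$ where $F(x_0,Y)$ has a multiple root, equivalently a root of the resultant $R_F=\res(F,F_Y)$. After translating $X\mapsto X+x_0$, the quantity $\vRF$ appearing in Theorem \ref{thm:puidV} centered at $x_0$ is exactly the multiplicity $v_{x_0}(R_F)$ of $x_0$ as a root of $R_F$.

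First I would use the standard degree bound $\deg R_F\le (2\dy-1)\dx$ for the resultant of a degree-$\dy$ polynomial and its derivative, which gives the key identity
\[
\sum_{x_0\text{ critical}} v_{x_0}(R_F)\ =\ \deg R_F\ \le\ 2\dy\dx.
\]
Summing Theorem \ref{thm:puidV} applied at each critical point then yields
\[
\sum_{x_0}\Ot\bigl(\dy\,v_{x_0}(R_F)\bigr)\ =\ \Ot\bigl(\dy\deg R_F\bigr)\ =\ \Ot(\dy^2\dx),
\]
and the bound $\Ot(\dy^2\dx)\subset\Ot(\dt^3)$ is immediate from $\dx,\dy\le \dt$.

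The main obstacle is that one cannot realise this sum by a naive loop over the critical points, because they typically lie in a non-trivial algebraic extension of $\Ki$ and factoring $R_F$ over $\Ki$ would destroy the target complexity. The fix is the dynamic evaluation strategy announced in the abstract (and developed in the algorithmic sections of the paper): instead of factoring the squarefree part $\tilde R_F$ of $R_F$, one runs the algorithm of Theorem \ref{thm:puidV} in parallel over the product ring $\Ki[X]/(\tilde R_F)$ and splits it lazily (via a D5-style triangular decomposition) whenever a zero-divisor is encountered during an inversion. Since each split is charged to the subalgebra where it actually occurs, the cumulated arithmetic cost telescopes to the cost one would pay if all critical points were rational, namely $\Ot(\dy\cdot\deg R_F)$.

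Finally I would observe that the preprocessing needed to set things up, namely computing $R_F$ and its squarefree part, fits within the same bound: the resultant and its squarefree part can be obtained in $\Ot(\dy^2\dx)$ operations by standard fast subresultant and squarefree algorithms, so the overall expected cost of the global algorithm is indeed $\Ot(\dy^2\dx)$.
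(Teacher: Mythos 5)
Your high-level accounting is right: the target follows from $\sum_{x_0} v_{x_0}(R_F)=\deg R_F\le (2\dy-1)\dx$ together with the single-point bound $\Ot(\dy\,\vRF)$, and dynamic evaluation is indeed how the paper avoids factoring $R_F$. But there is a genuine gap in the way you invoke dynamic evaluation.

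You propose to run the single-point algorithm \emph{once}, over the product ring $\Ki[X]/(\tilde R_F)$ where $\tilde R_F$ is the square-free \emph{part} of $R_F$, and you argue that D5 splittings ``telescope'' so the total cost is as if all critical points were rational. The telescoping argument is correct for the cost of the splittings themselves (this is exactly what the arithmetic-time machinery of Definition \ref{dfn:An} and Theorem \ref{thm:An} delivers), but it says nothing about the $X$-truncation bound, and that is where your argument breaks. The algorithm of Theorem \ref{thm:puidV} run at a critical point $x_0$ uses a truncation order proportional to $v_{x_0}(R_F)$, and this multiplicity varies across critical points. If you run a single D5 instance over $\Ki[X]/(\tilde R_F)$, you are forced to pick one truncation order for the whole product ring, namely $\max_{x_0} v_{x_0}(R_F)$, and the resulting cost is $\Ot\bigl(\dy\cdot\deg(\tilde R_F)\cdot\max_{x_0} v_{x_0}(R_F)\bigr)$, not $\Ot\bigl(\dy\cdot\sum_{x_0} v_{x_0}(R_F)\bigr)$. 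These can differ by a factor as large as $\dy\dx$: take $R_F$ with one root of high multiplicity $M\approx \deg R_F$ and the rest simple, so that $\deg(\tilde R_F)$ and $\max v_{x_0}$ are both close to $\deg R_F$.

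The missing ingredient is the square-free \emph{factorisation} $R_F=\prod_k Q_k^{n_k}$, not merely the square-free part. This is precisely the paper's Algorithm \desing{}: it groups critical points by their multiplicity in $R_F$ and makes a separate D5 call $\drnp(F,Q_k,n_k)$ over each $\Ki[X]/(Q_k)$ with its own truncation order $n_k$. Proposition \ref{prop:D5rnp3} bounds each such call by $\Ot(\deg(Q_k)\,\dy\,n_k)$, and only then does the sum collapse to $\Ot(\dy\,\deg R_F)\subset\Ot(\dy^2\dx)$ via $\sum_k \deg(Q_k)\,n_k + \vRF[\tilde F]=\dx(2\dy-1)$. (Note also the term $\vRF[\tilde F]$ accounting for the critical fiber at $X=\infty$ via the reciprocal polynomial, which your sketch omits; the paper's \desing{} treats it explicitly.) Computing the square-free factorisation, rather than just the square-free part, still fits in $\Ot(\dy^2\dx)$, so this repair does not cost you anything asymptotically.
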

This improves the bound $\Ot(\dy^2\dx^3)\subset\Ot(\dt^5)$ of
\cite{PoRy08,PoRy11}; note that \cite[Proposition 12]{PoRy15} suggests
a bound $\Ot(\dy^3\dx)\subset\Ot(\dt^4)$.  Via the Riemann-Hurwitz
formula, we get:
\begin{cor}\label{cor:g}
  Assuming $p=0$ or $p>\dt$, there exists an algorithm that computes
  the genus of a given geometrically irreducible algebraic plane curve
  over $\Ki$ of degree $\dt$ in an expected $\Ot(\dt^3)$ arithmetic
  operations.
\end{cor}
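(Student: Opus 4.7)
The plan is to apply the Riemann--Hurwitz formula to the covering $\pi:\tilde C \to \mathbb{P}^1_{\algclos{\Ki}}$ obtained by composing the normalisation $\tilde C$ of the projective closure of $\{F=0\}$ with projection onto the $X$-axis. Since $F$ is geometrically irreducible and separable in $Y$ and the hypothesis $p=0$ or $p>\dt\geq\dy$ ensures tame ramification, Riemann--Hurwitz yields
\[
g \;=\; 1-\dy + \frac{1}{2}\sum_P (e_P-1),
\]
where $P$ runs over the closed points of $\tilde C$ and $e_P$ is the ramification index of $\pi$ at $P$.

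Ramification is supported above the finitely many critical values of $\pi$: the roots of $R_F=\res(F,F_Y)$ in $\algclos\Ki$, together with possibly $X=\infty$. Above each affine critical value the ramification indices are precisely the denominators of the supports of the Puiseux series: a rational Puiseux expansion with ramification index $e$ and residual degree $f$ represents $f$ geometric branches of $\tilde C$, each contributing $e-1$ to the sum. In particular, the ramification data needed to evaluate $\sum_P(e_P-1)$ can be read off directly from the singular parts of the Puiseux series output by Theorem \ref{thm:puid3}.

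First I would apply Theorem \ref{thm:puid3} to $F$ to obtain the singular parts of all Puiseux series above the affine critical values in $\Ot(\dy^2\dx)\subset\Ot(\dt^3)$ operations, and extract the $e_P$ at those places. To handle ramification above $X=\infty$, I would apply the same algorithm to a transformed polynomial $\tilde F$ sending $X=\infty$ to the origin (for instance the numerator of $F(1/X,Y)$, preceded if necessary by a generic linear change of coordinates so that total degree is preserved and $\dy$ does not grow), again in $\Ot(\dt^3)$ operations. Summing $e_P-1$ over all closed points of $\tilde C$ and substituting into the formula above then yields $g$.

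The main obstacle is careful bookkeeping rather than a new idea: one must check that the output of the algorithm of Theorem \ref{thm:puid3} makes it cheap to recover $\sum_P(e_P-1)$ at the level of geometric branches (via the ramification index and residual degree attached to each rational Puiseux expansion, as defined in Section \ref{ssec:trigsets}), choose the transformation used to reach $X=\infty$ so as to stay within the $\Ot(\dt^3)$ bound of Theorem \ref{thm:puid3}, and verify that the postprocessing cost is negligible compared to the Puiseux computation itself.
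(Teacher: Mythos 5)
Your proposal is correct and follows essentially the same route as the paper: apply the Riemann--Hurwitz formula to the ramification data (ramification indices $e_{ki}$ and residual degrees $f_{ki}$) extracted from the D5-desingularisation produced by Theorem \ref{thm:puid3}. The only bookkeeping remark is that the separate treatment of $X=\infty$ you sketch is already internal to that theorem: Algorithm \desing{} runs \drnp{} on the reciprocal polynomial $X^{\dx}F(X^{-1},Y)$ precisely to cover $X=\infty$, so no extra coordinate change is required.
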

Moreover, using the reduction criterion of \cite{PoRy08,PoRy12}, we
can bound the bit complexity of the genus computation (here
$\height{P}$ stands for the maximum between the logarithm of the
denominator of $P$, and the logarithm of the infinite norm of its
numerator):
\begin{cor}\label{cor:gMC}
  Let $\Ki=\Qi(\gamma)$ be a number field, $0<\eps<1$ a real
  number and $F\in\Ki[X,Y]$. Denote $\Mg$ the minimal polynomial of
  $\gamma$ and $w$ its degree. Then there exists a Monte Carlo
  algorithm that computes the genus of the curve $F(X,Y)=0$ with
  probability of error less than $\eps$ and an expected number of word
  operations in:
  \[
  \Ot(\dy^2\dx{}w^2\log^2\epsilon^{-1}[\height{\Mg} +
  \height{F} +1]).
  \]
\end{cor}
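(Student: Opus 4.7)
The plan is to reduce the problem to a purely arithmetic computation over a finite residue field, by picking a random prime $p$, applying the reduction criterion of \cite{PoRy08,PoRy12} to the coefficients of $F$ and to the minimal polynomial $\Mg$, and then running the algorithm of Corollary~\ref{cor:g} over the quotient $\Fi_p[T]/(\bar\Mg)$, where $\bar\Mg$ is the image of $\Mg$ modulo $p$. Since our algorithm already handles non-field bases through dynamic evaluation, irreducibility of $\bar\Mg$ is not required, and a random $p$ is only required to be ``lucky'' in the sense of the reduction criterion.

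The first step is to extract from the reduction criterion of \cite{PoRy08,PoRy12} an explicit nonzero integer $\Delta\in\Zi$ whose prime divisors constitute the set of unlucky primes, so that for $p\nmid\Delta$ the geometric genus of $F=0$ over $\Ki$ coincides with that computed by the algorithm on the mod-$p$ reduction. A standard height estimate shows that $\log|\Delta|$ is polynomial in $\dy,\dx,\height{F},\height{\Mg}$; in particular a bound of the form $\log|\Delta|=\Ot(\dy\dx(\height{F}+\height{\Mg}+1))$ is enough for our purpose. Drawing $p$ uniformly at random from an interval of primes of bit size $\Theta(\log|\Delta|+\log\eps^{-1})$ then ensures, via the prime number theorem, that $p$ is lucky with probability at least $1-\eps$; primality can be tested at negligible cost.

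Next, I would evaluate the bit cost of running the $\Ot(\dy^2\dx)$ arithmetic operations delivered by Corollary~\ref{cor:g} over the degree-$w$ extension $\Fi_p[T]/(\bar\Mg)$. Using fast polynomial arithmetic, each such operation costs $\Ot(w\log p)$ bit operations. Multiplying the operation count by the per-operation bit cost and substituting the bound on $\log p$ produces the claimed shape, up to the additional factors of $w$ and $\log\eps^{-1}$ that appear when the expected cost of the Las~Vegas primitive-element subroutine underlying Theorem~\ref{thm:puid3} is carried through the analysis and converted into a Monte Carlo guarantee with error bound $\eps$.

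The main obstacle will be twofold: first, lifting the reduction criterion from the base case $\Ki=\Qi$ to the number-field setting $\Ki=\Qi(\gamma)$ with a sharp enough bound on $\log|\Delta|$; second, verifying that every ingredient used inside Corollary~\ref{cor:g} — valuations of resultants, Newton-polygon combinatorics, and recursive Puiseux expansions — is genuinely preserved under a lucky reduction, so that the mod-$p$ run returns the correct genus. Granted those two technical points, the announced bit-complexity bound follows by routinely multiplying the arithmetic operation count of Corollary~\ref{cor:g} by the per-operation bit cost, and combining with the probability estimate on $p$.
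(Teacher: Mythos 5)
Your high-level strategy — reduce modulo a well-chosen prime $p$ via the good-reduction criterion of \cite{PoRy08,PoRy12}, run the arithmetic algorithm of Corollary~\ref{cor:g} over $\Fi_p[T]/(\bar\Mg)$ (dynamic evaluation absorbing the possible reducibility of $\bar\Mg$), and convert the operation count into a bit/word bound — is precisely what the paper's proof amounts to; it consists of the single sentence ``Corollaries \ref{cor:gMC} and \ref{cor:gLV} follow from \cite{PoRy08,PoRy12}, where the authors show that we can reduce $F$ modulo a well chosen small prime within the given bit complexities.''

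There is, however, a concrete error in your parameterization of the random prime. You propose sampling $p$ from an interval of primes of bit size $\Theta(\log|\Delta|+\log\eps^{-1})$, where $\log|\Delta|=\Ot(\dy\dx(\height F+\height{\Mg}+1))$. With $\log p\sim\log|\Delta|$, each arithmetic operation in $\Fi_p[T]/(\bar\Mg)$ costs $\Ot(w\log p)=\Ot(w\,\dy\dx(\height F+\height{\Mg}+1))$ word operations, and multiplying by the $\Ot(\dy^2\dx)$ operation count of Corollary~\ref{cor:g} yields a total of $\Ot(\dy^3\dx^2\,w\,(\height F+\height{\Mg}+1))$, which overshoots the announced bound by a factor on the order of $\dy\dx/(w\log^2\eps^{-1})$. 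The point of the good-reduction criterion is that it is the \emph{number} of available primes, not their \emph{size}, that must scale with $\log|\Delta|$: since $\Delta$ has at most $\log_2|\Delta|$ distinct prime factors, sampling uniformly among $\Omega(\log|\Delta|/\eps)$ primes makes the unlucky event have probability $\le\eps$, and by the prime number theorem those primes may all be taken of bit size $\Ot(\log\log|\Delta|+\log\eps^{-1})$, i.e.\ logarithmic in $|\Delta|$. You should also note, as you do at the end, that this cheap bookkeeping alone does not by itself produce the precise $w^2\log^2\eps^{-1}(\height{\Mg}+\height F+1)$ factor — that accounting (including the cost of actually reducing $F$ and $\Mg$ modulo $p$, the contribution of the degree-$w$ extension, and the amplification scheme turning the Las Vegas primitive-element step into a Monte Carlo guarantee) is precisely what is deferred to \cite{PoRy08,PoRy12}, so the proposal still leaves the claimed constants unverified.
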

With the same notations as in Corollary \ref{cor:gMC}, we have:
\begin{cor}\label{cor:gLV}
  Assuming that the degree of the square-free part of the resultant
  $\res(F,F_Y)$ is known, there exists a Las Vegas algorithm that
  computes the genus of the curve $F(X,Y)=0$ with an expected number
  of word operations in:
  \[
    \Ot(\dy^2\dx{}w^2 [\height{\Mg} + \height{F}+1]).
  \]
\end{cor}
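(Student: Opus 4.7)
The plan is to convert the Monte Carlo algorithm of Corollary \ref{cor:gMC} into a Las Vegas one by inserting a single cheap verification test, made possible by the a priori knowledge of $d^{\star}:=\deg(\sqrfree(\res(F,F_Y)))$. The $\log^2\epsilon^{-1}$ factor in Corollary \ref{cor:gMC} comes solely from having to pick enough random primes so that the probability of hitting a bad one drops below $\epsilon$; if instead we can \emph{certify} that a given prime $p$ is good, a standard Las Vegas wrapper lets us retry until certification succeeds and absorb that factor into a constant.

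First, I would pick a random prime $p$ in the range prescribed by the reduction criterion of \cite{PoRy08,PoRy12} and compute $\overline F:=F\bmod p$, then $\overline R:=\res(\overline F,\overline F_Y)$ and its square-free part. I accept $p$ if and only if $\deg(\sqrfree(\overline R))=d^{\star}$; if accepted, I launch the Monte Carlo algorithm of Corollary \ref{cor:gMC} on $\overline F$ and return its output, otherwise I restart. The key fact is that specialisation of roots always gives $\deg(\sqrfree(\overline R))\leq d^{\star}$, with equality characterising exactly the primes for which the singular geometry (and hence the genus) is preserved by reduction. The test thus has no false positives: every accepted prime is a good prime in the sense of the reduction criterion, and the overall algorithm is Las Vegas.

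For the complexity, the fraction of bad primes in the chosen range is bounded above by a constant strictly less than $1$, since the bad primes must divide a fixed integer of bit-size polynomially bounded in $\height{F}$, $\height{\Mg}$ and $\dt$. Hence the expected number of outer iterations is $O(1)$. Each iteration costs one reduction of $F$, one resultant, one square-free factorisation modulo $p$, plus one call to the Monte Carlo algorithm with fixed constant $\epsilon$; all of these are dominated by the bound of Corollary \ref{cor:gMC} with $\log^2\epsilon^{-1}$ replaced by $O(1)$, yielding exactly the announced bit complexity $\Ot(\dy^2\dx\,w^2[\height{\Mg}+\height{F}+1])$.

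The main obstacle will be aligning the equality $\deg(\sqrfree(\overline R))=d^{\star}$ \emph{exactly} with the notion of ``good prime'' used in \cite{PoRy08,PoRy12}: one must argue that when equality holds, the Newton-Puiseux algorithm run modulo $p$ returns the same combinatorial data (Newton polygons, characteristic exponents, ramification indices) as in characteristic zero, which is precisely what the Riemann-Hurwitz formula needs to recover the genus. A secondary point is that the other randomised ingredients inside the Monte Carlo algorithm (notably the primitive element computations, see Remark \ref{rem:deterministic}) are already Las Vegas, so prime selection is the only source of one-sided error to be removed.
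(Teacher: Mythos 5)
Your proposal is correct and follows the same route as the paper's (very terse) proof, which simply defers to the good-reduction criterion of \cite{PoRy08,PoRy12}: knowing $\deg(\sqrfree(\res(F,F_Y)))$ in advance lets one \emph{certify} that a candidate prime is good (the square-free degree of the reduced resultant can only drop), so the retry-until-certified wrapper turns the Monte Carlo algorithm into a Las Vegas one and removes the $\log^2\epsilon^{-1}$ factor. The one thing you leave implicit is that the certificate also requires the routine side conditions of the criterion in \cite{PoRy08,PoRy12} — $p>\dy$, $p\nmid\lc{Y}{F}$, and preservation of $\deg R_F$ — but these are cheaply checked alongside the square-free degree test and do not change the analysis.
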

Finally, our algorithm induces a fast analytic factorisation of $F$:
\begin{thm}\label{thm:anfact}
  There exists an algorithm that computes the irreducible analytic
  factors of $F$ in $\Ki[[X]][Y]$ with precision $N\in\Ni$ in an
  expected $\Ot(\dy(\vRF+N))$ arithmetic operations in $\Ki$, plus the
  cost of one univariate factorisation of degree at most $\dy$.
\end{thm}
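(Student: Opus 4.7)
The approach is to combine Theorem \ref{thm:puidV} with fast multifactor Hensel lifting. Recall that the irreducible factors $F_1,\ldots,F_r\in\Ki[[X]][Y]$ of $F$ are in bijection with the Galois orbits of the Puiseux series of $F$ above $X=0$; concretely, each factor can be written as $F_i=\prod_\sigma(Y-\sigma(S_i))$, with $\sigma$ running over the Galois orbit of a representative series $S_i$.

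The algorithm proceeds in three stages. First, factor $F(0,Y)\in\Ki[Y]$; this supplies the initial coprime partition needed to launch Hensel lifting and is the single univariate factorisation mentioned in the statement. Second, apply Theorem \ref{thm:puidV} to compute the singular parts of all Puiseux series of $F$ above $X=0$ in $\Ot(\dy\vRF)$ operations over $\Ki$. From the rational Puiseux expansions attached to each Galois orbit, reconstruct an approximate factor $\tilde F_i\in\Ki[X][Y]$ at a precision $X^M$ of order $\vRF$ by expanding the product $\prod_\sigma(Y-\sigma(S_i))$ via a subproduct tree, which fits in the same budget.

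Third, apply fast multifactor Hensel lifting to the approximate factorisation $F\equiv\tilde F_1\cdots\tilde F_r\pmod{X^M}$ to reach precision $X^N$. The prerequisite is that the $\tilde F_i$ be pairwise coprime modulo $X^M$: this holds because $\vRF$ controls the sum of the $X$-valuations of the pairwise resultants $\res(F_i,F_j)$, so choosing $M$ slightly larger than $\vRF$ suffices to make the approximate factors Hensel-liftable. Standard fast Hensel tree algorithms then lift $r$ factors of total $Y$-degree $\dy$ to precision $X^N$ in $\Ot(\dy N)$ operations. Adding the three costs yields $\Ot(\dy(\vRF+N))$ arithmetic operations in $\Ki$ plus one univariate factorisation of degree at most $\dy$, as claimed.

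The main obstacle is handling the precision threshold cleanly: one must verify that the truncated singular parts produced by the Puiseux algorithm determine each $\tilde F_i$ unambiguously modulo $X^M$, possibly requiring a short Newton-iteration refinement on individual branches before the approximate factors can be assembled and passed to the multifactor Hensel lifting phase; a careful accounting of the Galois stability of the truncations is also needed to ensure that the $\tilde F_i$ lie in $\Ki[X][Y]$ and not merely in an algebraic extension.
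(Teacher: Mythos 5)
The proposal has the right high-level shape (compute singular parts, reconstruct approximate factors by expanding over Galois orbits with a subproduct tree, then Hensel-lift to precision $N$), but it misses the central technical obstruction and consequently misidentifies where the single univariate factorisation enters.

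The decisive gap is the Hensel step. You invoke ``standard fast Hensel tree algorithms,'' which require the modular factors to be pairwise coprime modulo $X$ (equivalently, a Bezout relation $U_iG_i+V_iH_i\equiv 1$). That hypothesis fails here by design: $X=0$ is a critical fiber, so distinct analytic factors $F_i$, $F_j$ typically share roots modulo $X$ and $\res(F_i,F_j)$ has positive $X$-valuation. No choice of $M$ ``slightly larger than $\vRF$'' restores coprimality mod $X$. What the paper actually builds (Sections \ref{ssec:bezout}--\ref{ssec:hensel} and Proposition \ref{prop:factolift}) is a \emph{generalised} Hensel lifting based on a relation $UG+VH\equiv X^{\kgh}\pmod{X^{\kgh+1}}$, where $\kgh=\kgh(G,H)$ is the lifting order, computed via the algorithm of \cite{MoSc16}; the lifting then converges from an initial precision $n>2\kgh$, doubling $n_0-\kgh$ at each step, and the multi-factor version needs $n>2\max_{I,J}\kgh(F_I,F_J)$. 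Your proposal never introduces $\kgh$ nor the modified iteration \onestep{}, so the lifting step as written does not apply. Note also that Proposition \ref{prop:bound-kgh} (which bounds $\kgh$ in terms of $\val(F_Y(S))$) is what makes the precision management compatible with the $\Ot(\dy(\vRF+N))$ budget.

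A second, related, inaccuracy: you take the ``one univariate factorisation'' to be that of $F(0,Y)$, used to seed the Hensel tree. But $F(0,Y)$ can be a pure power $Y^{\dy}$ (Weierstrass case), providing no partition at all. In the paper, the whole algorithm runs with \emph{dynamic evaluation} so no univariate factorisation is performed during the recursion; the single factorisation appearing in Theorem \ref{thm:anfact} is performed \emph{at the end}, on the polynomials $P_i$ defining the residue field products $\Ki_{P_i}$, with $\sum\deg P_i=\sum f_i\le \dy$. Only after this do the D5-factors split into the genuinely irreducible $F_i^*\in\Ki[[X]][Y]$. Finally, a smaller but structurally significant difference: rather than computing all RPEs via Theorem \ref{thm:puidV} and then reassembling, the paper threads the factor construction \emph{through} the divide-and-conquer recursion (modified \wdrnp{}: \norm{} outputs the $G_i$, and the \hensel{} call is replaced by the multi-factor lifting of Proposition \ref{prop:factolift}), so that at each recursion level the relevant $\kgh$ stays in $\O(\vRF/\dy)$ and the complexity accounting goes through. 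Your version, reconstructing everything at one global precision $M\approx\vRF$, would need a separate argument that the global multi-factor lifting order is small enough, which you do not provide.
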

This has a particular interest with regards to
factorisation in $\Ki[X,Y]$ or $\algclos{\Ki}[X,Y]$: when working
along a critical fiber, one can take advantage of some combinatorial
constraints imposed by ramification when recombining analytic factors
into rational factors \cite{We16}.
\paragraph{Main ideas and organisation of the paper.} Classical
definitions related to Puiseux series and description of the
\emph{rational Newton--Puiseux algorithm} of \cite{Du89} are provided
in Section \ref{sec:defs}. Then, the paper is organised accordingly to
the following main ideas:
\begin{idea}
  \label{id:Weierstrass}
  \textbf{Concentrate on the monic case}. The roots above $(0,\infty)$
  require special care (see Section \ref{ssec:proofD3-general}), This
  is why we use $\vRF=\val(\res(F,F_Y))$ and not $\val(\disc(F))\le \vRF$.
\end{idea}
\begin{idea}
  \label{id:fact}
  \textbf{Use tight truncation bounds} for the powers of $X$ in the
  course of the algorithm. The bound $n=\vRF$ can be reached for
  \emph{some} Puiseux series, but we prove in Section \ref{sec:algo}
  that we can compute at least half of them using a bound
  $n\in{}\O(\vRF/\dy)$.
\end{idea}
\begin{idea}
  \label{id:hens}
  \textbf{A divide and conquer algorithm}. From Idea \ref{id:fact}, we
  prove that $F$ is irreducible (and get its Puiseux series) or get a
  factorisation $F=G\,H \mod X^n$ where $n\in{} \O(\vRF/\dy)$, $G$
  corresponds to the computed Puiseux series, and $H$ satisfies
  $\deg_Y(H)\leq\dy/2$. The fiber $X=0$ being critical, $G(0,Y)$ and
  $H(0,Y)$ are not coprime, and the classical Hensel lemma does not
  apply. But it can be adapted to our case to lift the factorisation
  $F=G\,H$ up to precision $\vRF$. This requires a B\'ezout relation
  $U\,G+V\,H=X^\kgh$ with $\kgh\in\O(\vRF/\dy)$, computed via
  \cite{MoSc16}. Finally, we recursively compute the Puiseux series of
  $H$, defining a divide and conquer algorithm to compute an analytic
  factorisation of $F \mod X^{\vRF+1}$, together with the singular
  parts of its Puiseux series above $x_0=0$. See Section
  \ref{sec:rnp3}.
\end{idea}
\begin{idea}
  \label{id:D5}
  \textbf{We rely on dynamic evaluation}. The next step is to get rid
  of univariate factorisations, which are too expansive for our
  purpose. In Section \ref{sec:D5}, we use dynamic evaluation
  \cite{DeDiDu85,DaMaScXi05} to avoid this bottleneck, leading to work
  over product of fields: we have to pay attention to zero divisors
  and perform suitable splittings when required.
\end{idea}

These ideas allow us to compute the desingularisation of the curve
above all its critical points in Section \ref{sec:desing}. We get a
complexity bound, as good as, up to logarithmic factors, the best
known algorithm to compute bivariate resultants. This is Theorem
\ref{thm:puid3}. 

Finally, we develop a fast factorisation algorithm and prove Theorem
\ref{thm:anfact} in Section \ref{sec:facto}.

To conclude, we add further remarks in Section \ref{sec:conc}, showing
in particular that any Newton--Puiseux like algorithm would not lead
to a better worst case complexity.

\paragraph{A brief state of the art.} In \cite{Du89}, D. Duval defines
the rational Newton--Puiseux algorithm over a field $\Ki$ with
characteristic $0$. From the complexity analysis therein, it takes
less than $\O(\dy^6\,\dx^2)$ operations in $\Ki$ when $F$ is monic (no
fast algorithm is used). This algorithm uses the D5-principle, and can
trivially be generalised when $p>\dy$.

In \cite{PoRy08,PoRy11}, an algorithm with complexity
$\Ot(\dy\,\vRF^2+\dy\,\vRF\,\log(p^c))$ is provided over
$\Ki=\Fi_{p^c}$, with $p>\dy$. From this bound is deduced an algorithm
that computes the singular parts of Puiseux series of $F$ above
\emph{all} critical points in $\Ot(\dy^3\,\dx^2\,\log(p^c))$. In
\cite{PoRy15}, still considering $\Ki=\Fi_{p^c}$, an algorithm is
given to compute the singular part of Puiseux series over $x_0=0$ in
an expected $\Ot(\rho\,\dy\,\vRF+\rho\,\dy\log(p^c))$ arithmetic
operations, where $\rho$ is the number of rational Puiseux expansions
above $x_0=0$ (bounded by $\dy$). These two algorithms use univariate
factorisation over finite fields, thus cannot be directly extended to
the $0$ characteristic case. This also explains why the second result
does not provide an improved bound for the computation of Puiseux
series above \emph{all} critical points.

There are other methods to compute Puiseux series or analytic
factorisation, as generalised Hensel constructions \cite{KaSa99,AlAtMa17}, or
the Montes algorithm \cite{Mo99,BaNaSt13} (which works over general
local fields). Several of these methods and a few others have been
commented in previous papers by the first author
\cite{PoRy12,PoRy15}.  Also, there exist algorithms for the genus
based on linear differential operators and avoiding the computation of
Puiseux series \cite{CoSiTrUl02,BoChLeSaSc07}. To our knowledge, none of these
methods have been proved to provide a complexity which fits in the
bounds obtained in this paper.
\paragraph{Acknowledgment.} This paper is dedicated to Marc Rybowicz,
who passed away in November 2016 \cite{mort-Marc}. The first ideas of
this paper actually came from a collaboration between Marc and the
first author in the beginning of 2012, that led to \cite{PoRy15} as a
first step towards the divide and conquer algorithm presented here.
We also thank Fran\c{c}ois Lemaire for many useful discussions on
dynamic evaluation.


\section{Main definitions and classical algorithms.}
\label{sec:defs}

\subsection{Puiseux series.}
\label{ssec:puiseux}
We keep notations of Section \ref{sec:intro}. Up to a change of
variable $X \gets X+x_0$, it is sufficient to give definitions and
properties for the case $x_0=0$.  Under the assumption that $p=0$ or
$p>\dy$, the well known Puiseux theorem asserts that the $\dy$ roots
of $F$ (viewed as a univariate polynomial in $Y$) lie in the field of
Puiseux series $\cup_{e\in \mathbb{N}} \algclos{\Ki}((X^{1/e}))$. See
\cite{BrKn86, Ei66, Wa50} or most textbooks about algebraic functions
for the $0$ characteristic case. When $p>\dy$, see \cite[Chap. IV,
Sec. 6]{Ch51}.  It happens that these Puiseux series can be grouped
according to the field extension they define. Following Duval
\cite[Theorem 2]{Du89}, we consider decompositions into irreducible
elements:
\begin{eqnarray*}
  F     &  = & \prod_{i=1}^{\rho} F_i \mbox{ with $F_i$ irreducible in } \Ki[[X]][Y] \label{eq:fact1} \\%
  F_i   & = & \prod_{j=1}^{f_i} F_{ij} \mbox{ with $F_{ij}$ irreducible in } \algclos{\Ki}[[X]][Y]  \label{eq:fact2} \\%
  F_{ij} & = & \prod_{k=0}^{e_i-1}  \left(Y-S_{ij}(X^{1/e_i}\zeta_{e_i}^k) \right) \mbox{ with } S_{ij}\in \algclos{\Ki}((X))\label{eq:fact3}
\end{eqnarray*}
with $\zeta_{e_i}\in\algclos{\Ki}$ is a primitive $e_i$-th root of
unity. Primitive roots are chosen so that $\zeta_{ab}^b=\zeta_a$.
\begin{dfn}
  \label{def:cpe}
  The $\dy$ fractional Laurent series
  $S_{ijk}(X)=S_{ij}(X^{1/e_i}\zeta_{e_i}^k)\in\algclos{\Ki}((X^{1/{e_i}}))$
  are called the \emph{classical Puiseux series} of $F$ above
  0. The integer $e_i\in\Ni$ is the \emph{ramification index} of $S_{ij}$.  If
  $S_{ij}\in \algclos{\Ki}[[X^{1/e_i}]]$, we say that $S_{ij}$ \emph{is
    defined at $x_0=0$}.
\end{dfn}
\begin{prop}
  \label{prop:eifi}
  The $\{F_{ij}\}_{1\leq j \leq f_i}$ have coefficients in a degree
  $f_i$ extension $\Ki_i$ of $\Ki$. They are conjugated by the action
  of the Galois group of $\Ki_i/\Ki$.  We call $\Ki_i$ the
  \emph{residue field} of any Puiseux series of $F_i$ and $f_i$ its
  \emph{residual degree}. We have the relation
  $\sum_{i=1}^{\rho} e_i\,f_i =\dy$.
\end{prop}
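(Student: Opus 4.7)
The plan is to apply classical Galois theory to the coefficient-wise action of $\gal{\algclos{\Ki}/\Ki}$ on the UFD $\algclos{\Ki}[[X]][Y]$. The first step I would carry out is a descent observation: since $\Ki$ is perfect, the fixed ring of $\algclos{\Ki}[[X]]$ under this action, computed coefficient by coefficient for each power of $X$, equals $\Ki[[X]]$, and hence the fixed ring of $\algclos{\Ki}[[X]][Y]$ equals $\Ki[[X]][Y]$. In particular the polynomial $F_i$, which lives in $\Ki[[X]][Y]$, is fixed by the Galois action, so the action permutes the finite set $\{F_{ij}\}_{1\leq j\leq f_i}$ of monic irreducible factors of $F_i$ in $\algclos{\Ki}[[X]][Y]$ (the factorisation being unique up to order because the $F_{ij}$ are taken monic in $Y$).

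Next, I would show this permutation action is transitive. If a proper subset $O \subsetneq \{F_{ij}\}$ formed a Galois orbit, then its product $\prod_{F_{ij} \in O} F_{ij}$ would be a monic polynomial in $Y$ fixed by the full Galois group, hence with coefficients in $\Ki[[X]]$ by the previous step, yielding a nontrivial factorisation of $F_i$ in $\Ki[[X]][Y]$ and contradicting its irreducibility there. Transitivity being established, I would fix one factor, say $F_{i1}$, let $H_i < \gal{\algclos{\Ki}/\Ki}$ be its stabiliser, and define $\Ki_i$ as the fixed field of $H_i$ (or, to match the statement literally, the corresponding finite Galois subextension through which the action on the orbit factors). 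The orbit-stabiliser theorem gives $[\Ki_i:\Ki] = f_i$, the coefficients of $F_{i1}$ lie in $\Ki_i$ by definition of the stabiliser, and the remaining $F_{ij}$ are obtained from $F_{i1}$ by applying representatives of the cosets, realising them as the Galois conjugates of $F_{i1}$ under $\gal{\Ki_i/\Ki}$.

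For the degree formula, the factorisation $F_{ij} = \prod_{k=0}^{e_i-1}(Y - S_{ij}(X^{1/e_i}\zeta_{e_i}^k))$ recalled just before the proposition shows $\deg_Y F_{ij} = e_i$. Reading off $Y$-degrees from $F = \prod_{i=1}^{\rho}\prod_{j=1}^{f_i} F_{ij}$ immediately yields $\dy = \sum_{i=1}^{\rho} e_i f_i$.

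The main obstacle, and really the only non-cosmetic point, is the descent step: one must make sure that Galois invariance of a power series coefficient forces it to lie in $\Ki$, and that taking Galois-invariant products of monic $Y$-polynomials produces a genuine factor in $\Ki[[X]][Y]$. Both are routine once one argues coefficient-of-$X^n$ by coefficient-of-$X^n$, using that $\Ki$ is perfect so that $\algclos{\Ki}^{\gal{\algclos{\Ki}/\Ki}} = \Ki$; after that, everything reduces to the standard orbit-stabiliser dictionary between transitive Galois actions on irreducible factors and field extensions of the base.
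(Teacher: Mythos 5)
The paper's proof for this proposition is purely a citation: it defers the first claim (coefficient field, residual degree, conjugacy) to \cite[Section 1]{Du89} and the degree relation $\sum e_i f_i = \dy$ to \cite[Chapter 4, Section 1]{Ch51}. Your proposal, by contrast, gives a self-contained Galois-descent argument. The two routes are compatible — your argument is essentially the standard content of those references unpacked — but they are genuinely different as proofs: yours is elementary and reconstructs the machinery, while the paper delegates to classical theory of local fields / complete DVRs (Chevalley's framework also yields $e_i f_i = \deg_Y F_i$ directly as the residual degree–ramification identity, without needing the explicit root decomposition you invoke; the root decomposition approach is arguably cleaner here since the paper has already written it down).

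One point deserves care, though it reflects an imprecision already present in the paper's wording rather than a defect unique to your proof. You define $\Ki_i$ as the fixed field of the stabiliser $H_i$ of $F_{i1}$; this has degree exactly $f_i$ and contains the coefficients of $F_{i1}$, all correct. But $\Ki_i$ need not be Galois over $\Ki$ (think of $F_i = Y^3 - 2 - X$ over $\Qi$, where the residue field is $\Qi(2^{1/3})$), so "$\gal{\Ki_i/\Ki}$" in your final sentence — and in the proposition's own phrasing — is not literally well-formed, and the other $F_{ij}$ live over \emph{conjugates} $\sigma(\Ki_i)$, not over $\Ki_i$ itself. Your parenthetical offers to instead take the Galois subextension through which the orbit action factors, but that field generally has degree strictly larger than $f_i$, contradicting the sentence that follows. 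The clean statement is: $\Ki_i$ is the degree-$f_i$ field generated by the coefficients of one chosen $F_{ij}$ (well-defined up to $\Ki$-isomorphism), and the $F_{ij}$ form a single orbit of size $f_i$ under $\gal{\algclos{\Ki}/\Ki}$, equivalently under $\gal{L/\Ki}$ for $L$ the Galois closure of $\Ki_i$. With that reading, the rest of your descent and orbit–stabiliser argument, and the degree count $\dy = \sum_i e_i f_i$ read off from $\deg_Y F_{ij} = e_i$, are all correct.
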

\begin{proof}
  First claim is \cite[Section 1]{Du89}. Second one is
  e.g. \cite[Chapter 4, Section 1]{Ch51}.
\end{proof}
This leads to the definition of rational Puiseux expansions (classical
Puiseux series can be constructed from a system of rational Puiseux
expansions - see e.g. \cite[Section 2]{PoRy15}):
\begin{dfn}
  \label{dfn:RPE}
  A system of \emph{rational Puiseux expansions} over $\Ki$ ($\Ki$-RPE)
  of $F$ above 0 is a set $\{R_i\}_{1\leq i \leq \rho}$ such that:
  \begin{itemize}
  \item $R_i(T) \in \Ki_i((T))^2$;%
  \item
    $R_i(T) = (X_i(T),Y_i(T)) = \left(\gamma_i T^{e_i},
      \sum_{l=n_i}^{\infty}\beta_{il} T^l\right)$,
    with $n_i\in \Zi$, $\gamma_i \neq{} 0$ and
    $\beta_{i,n_i} \neq{} 0$;%
  \item $R_i$ is a parametrisation of $F_i$,
    i.e. $F_i(X_i(T),Y_i(T))=0$;
  \item the parametrisation is irreducible, i.e. $e_i$ is minimal.
  \end{itemize}
  We call $(X_i(0),Y_i(0))$ the \textit{center} of $R_i$. We have
  $Y_i(0)=\infty$ if $n_i<0$, which happens only for non monic
  polynomials.
\end{dfn}
Throughout this paper, we will truncate the powers of $X$ of
polynomials or series. To that purpose, we introduce the following
notation: given $\tau\in\Qi$ and a Puiseux series
$S=\sum_{\alpha\in \Qi} c_{\alpha} X^{\alpha}$, we denote
$\tronc{S}{\tau}=\sum_{\alpha\leq \tau} \alpha X^{\alpha}$ (this sum
having thus a finite number of terms).  We generalize this notation to
polynomials with coefficients in the field of Puiseux series by
applying it coefficient-wise. In particular, if $H\in \Ki[[X]][Y]$ is
defined as $H=\sum_i(\sum_{k\geq 0} \alpha_{ik}X^k)Y^i$, then
$\tronc{H}{\tau} = \sum_i(\sum_{k=0}^{\lfloor \tau
  \rfloor}\alpha_{ik}X^k)Y^i$.
\begin{dfn}\label{dfn:singRPE}
  The \emph{regularity index} $r$ of a Puiseux series $S$ of $F$ with
  ramification index $e$ is the least integer
  $N\geq \min(0,e\,\val(S))$ such that, if
  $\tronc{S}{\frac{N}{e}}=\tronc{S'}{\frac{N}{e}}$ for some Puiseux
  series $S'$ of $F$, then $S=S'$. We call $\tronc{S}{\frac{r}{e}}$
  the \emph{singular part} of $S$ in $F$.
\end{dfn}
Roughly speaking, the regularity index is the number of terms
necessary to ``separate'' a Puiseux series from all the others (with a
special care when $\val(S)<0$). 
\begin{xmp}
  \label{xmp:ri}
  Consider $F_1\in\Fi_{29}[X,Y]$ defined as
  $F_1=\prod_{i=1}^3(Y-S_i(X))+X^{19}Y$ with
  $S_i = X+X^2+X^3+17\,X^4+X^5+X^6+X^7+(-1)^i\,X^{15/2}$,
  $1\leq i \leq 2$ and $S_3=X+X^2+X^3+X^4$. The singular parts of the
  Puiseux series of $F_1$ are precisely the $S_i$, with regularity
  indices respectively $r_1=r_2=15$ and $r_3=4$.
\end{xmp}
Since regularity indices of all
Puiseux series corresponding to the same rational Puiseux expansion
are equal, we define:
\begin{dfn}
  The \emph{singular part} of a rational Puiseux expansion $R_i$ of
  $F$ is the pair
  \[
  \left(\gamma_i T^{e_i}, \Gamma(T)=\sum_{k=n_i}^{r_i} \beta_{ik} T^k\right),
  \]
  where $r_i$ is the regularity index of $R_i$, i.e. the one of any
  Puiseux series associated to $R_i$.
\end{dfn}
Once such a singular part has been computed, the implicit function
theorem ensures us that one can compute the series up to an arbitrary
precision. This can be done in quasi linear time by using a Newton
operator \cite[Corollaries 5.1 and 5.2, page 251]{KuTr78}.

\paragraph{Notations.} In the remaining of the paper, we will denote
$(R_i)_{1\leq i\leq\rho}$ the rational Puiseux expansions of $F$. To
  any $R_i$, we will always associate the following notations:
\begin{itemize}
\item $e_i$, $f_i$  and $r_i$ will respectively be the ramification index,
  the residual degree and the regularity index of $R_i$,
\item we define $v_i\in\Qi$ as $\val(F_Y(S))$ for any Puiseux series
  $S$ associated to $R_i$.
\end{itemize}
Same notations will be used if $S_i$ (or $S_{ijk}$) denotes a Puiseux
series. If we omit any index $i$, we will use the notations $e$, $f$
and $r$ for the three first integers.


\subsection{The rational Newton--Puiseux algorithm.}
\label{ssec:NP}
Our algorithm in Section \ref{sec:algo} is a variant of the well known
Newton--Puiseux algorithm \cite{Wa50,BrKn86}. We now explain (roughly
speaking) the idea of this algorithm via an example, and then describe
the variant of D. Duval \cite[section 4]{Du89} (we use its
improvements).
\paragraph{Tools and idea of the algorithm.} Let
$F_0(X,Y)=Y^6+Y^5X+5\,Y^4X^3-2\,Y^4X+4\,Y^2X^2+X^5-3\,X^4$ and
consider its Puiseux series computation. From the Puiseux theorem, the
first term of any such series $S(X)$ is $\alpha\,X^{\frac m q}$ for
some $\alpha\in\algclos\Ki$ and $(m,q)\in\Ni^2$. We have
$F_0(X,\alpha\,X^{\frac m q}+\cdots) = \alpha^{6} \, X^{\frac {6\,m}
  q} + \alpha^5 \, X^{\frac {5\,m} q+1} + 5\,\alpha^4\,X^{\frac {4\,m}
  q+3}- 2\,\alpha^4 \,X^{\frac {4\,m} q+1} + 4\,\alpha^2\,X^{\frac
  {2\,m} q+2} + X^5-3\,X^4 + \cdots$.
To get $F_0(X,S(X))=0$, at least two terms of the previous sum must
cancel one another, i.e. $(m,q)$ must be chosen so that two or more of
the exponents coincide. To that purpose, we use the following
definition:
\begin{dfn}
The \emph{support} of $F=\sum_{i,j}\alpha_{ij}X^j\,Y^i$ is the set
  $\{(i,j)\in \Ni^2\,|\, \alpha_{ij}\neq 0\}$.
\end{dfn}
Note that the powers of $Y$ are given by the horizontal axis. The
condition on $(m,q)$ can be translated as: two points of the support
of $F_0$ belong to the same line $\edge{m}{q}{l}$. To increase the
$X$-order of the evaluation, no point must be under this
line. Here we have two such lines, $\edge{}{2}{6}$ and $\edge{}{}{4}$,
that define the Newton polygon of $F_0$:
\begin{dfn} \label{dfn:NP} The \emph{Newton polygon} $\Nc(F)$ of $F$
  is the lower part of the convex hull of its support.
\end{dfn}
We are now considering the choice of $\alpha$ corresponding to
$\edge{}{2}{6}$. We have
$F_2(T^2,\alpha\,T) = (\alpha^6-2\alpha^4+4\alpha^2)\,T^6 - 3\,T^8 +
\alpha^5\,T^7 + (5\alpha^4+1)T^{10} + \dots$, meaning that $\alpha$
must be a non zero root of $P(Z)=Z^6-2\,Z^4+4Z^2$.
Then, to get more terms, we recursively apply this strategy to
the polynomial $F_2(X^2,X\,(Y+\alpha))$. Actually, it is more
interesting to consider a root $\xi=\alpha^2$ of the polynomial
$\phi(Z)=Z^2-2\,Z+4$ (we have $P(Z)=Z^2\,\phi(Z^2)$ and we are obviously
not interested in the root $\alpha=0$), which is the characteristic
polynomial \cite{Du89}:
\begin{dfn}
  If $F=\sum \alpha_{ij}X^jY^i$, then the \emph{characteristic
    polynomial} $\phi_{\Delta}$ of $\Delta\in\Nc(F)$ is
  $\phi_{\Delta}(T) = \sum_{(a,b)\in \Delta} \alpha_{ab}
  T^{\frac{a-a_0}{q}}$
  where $a_0$ is the smallest value such that $(a_0,b_0)$ belongs to
  $\Delta$ for some $b_0$.
\end{dfn}

\paragraph{Description of the algorithm.} We now provide a formal
definition of the \RNP{} algorithm for monic polynomials (see Section
\ref{ssec:proofD3-general} for the non monic case); it uses two sub
algorithms, for each we only provide specifications, and an additional
definition, the \emph{modified} Newton polygon \cite[Definition
6]{PoRy15}. The latter enables \RNP{} to output \emph{precisely} the
singular part. We will not use it in our strategy, except for the
proof of Lemma \ref{lem:Ni-vFi} (see Remark \ref{rem:RNP-ARNP}). For
the sake of completness, we recall it below.

\begin{itemize}
\item If $F=\sum_{i=0}^\dy \alpha_i(X) \, Y^i $, the {\em modified Newton
    polygon} $\Ns(H)$ is constructed as follow: if $\alpha_0=0$
  (resp. $\alpha_0\neq 0$ and the first edge, starting from the left,
  ends at $(1,v_X(\alpha_1)))$, add to $\Nc(F)$ (resp. replace the
  first edge by) a fictitious edge joining the vertical axis to
  $(1,v_X(\alpha_1))$ such that its slope is the largest (negative or
  null) integer less than or equal to the slope of the next edge (see
  Figure \ref{fig:Ns}).%
\item \bezout{}, given $(q,m) \in \Zi^{2}$ with $q>0$, computes
  $(u,v)\in \Zi^2$ s.t. $u\,q-m\,v=1$ and $0\leq v < q$.%
\item \factor{}, given $\Ki$ a field and $\phi$ a univariate
  polynomial over $\Ki$, computes the factorisation of $\phi$ over
  $\Ki$, given as a list of factors and multiplicities.%
\end{itemize}

\begin{algorithm}[ht]
  \nonl{}\TitleOfAlgo{\RNP($F, \Ki, \pi$)\label{algo:RNP}}
  \KwIn{$F\in\Ki[X,Y]$ monic, $\Ki$ a field and $\pi$ the result of
    previous computations ($\pi=(X,Y)$ for the initial call)}%
  \KwOut{A set of singular parts of rational Puiseux expansions above
    $(0,0)$ of $F$ with their base field.}%
  $\Rc\ \gets\ \{\}$\tcp*{results of the algorithm will be grouped in
    $\Rc{}$}%
  \ForEach(\tcp*[f]{we consider only negative
    slopes}){$\Delta\in\Ns(F)$
}{%
    Compute $m,q,l,\phi_\Delta$ associated to $\Delta$\;%
    $(u,v)\ \gets$ \bezout$(m,q)$\;%
    \ForEach{$(\phi,M)$ \In{} \factor$(\phi_\Delta)$}{%
      Take $\xi$ a new symbol satisfying $\phi(\xi)=0$\;%
      $\pi_1=\pi(\xi^v\,X^q,X^m\,(Y+\xi^u))$\;%
      \lIf{$M=1$}{$\Rc\ \gets\ \Rc\ \cup\ \{(\pi_1(T,0),\Ki(\xi))\}$}%
      \Else{%
        $H(X,Y)\ \gets\ F(\xi^vX^q,X^m\,(Y+\xi^u))/X^l$\label{line:RNPShift}\tcp*{Puiseux transform}%
          $\Rc\ \gets\ \Rc\ \cup$ \RNP($H,\Ki(\xi),\pi_1$)\;%
        }%
      }%
    }%
    \Return $\Rc$\;
\end{algorithm}

The key improvement of this rational version is the distribution of
$\xi$ to both $X$ and $Y$ variables (line \ref{line:RNPShift}). This
avoids to work with $\alpha=\xi^{1/q}$ and to introduce any useless
field extension due to ramification (see \cite[Section 4]{Du89}).

\begin{figure}[ht]
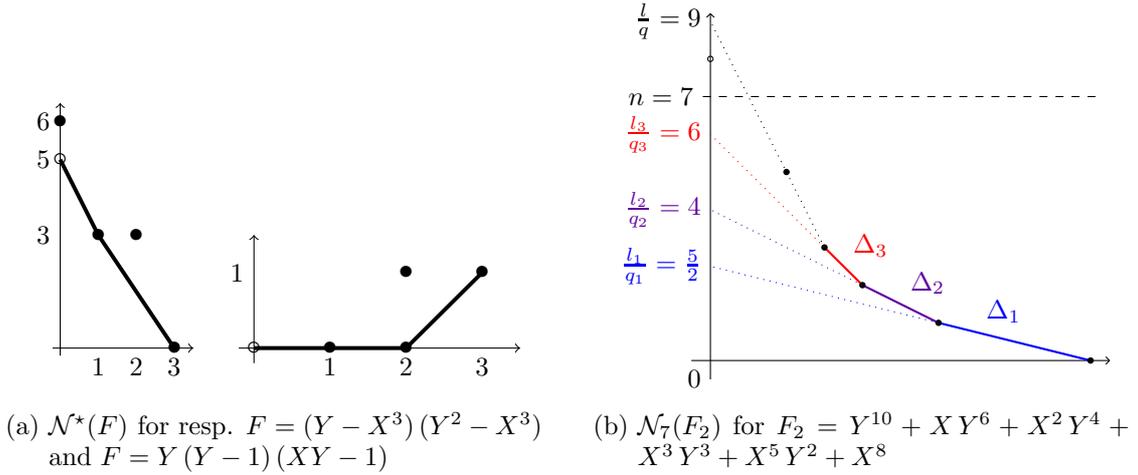

  \begin{subfigure}[b]{.48\linewidth}
    \centering
    \input{figures/fig-Ns.tkz}
    \caption{$\Ns(F)$ for resp.
      $F=(Y-X^3)\,(Y^2-X^3)$ and
      $F=Y\,(Y-1)\,(XY-1)$ \label{fig:Ns}}
  \end{subfigure}%
  \hspace*{.04\linewidth}
  \begin{subfigure}[b]{.48\linewidth}
    \centering
    \input{figures/fig-Nn.tkz}
    \caption{$\Nn[7](F_2)$ for
      $F_2=Y^{10}+X\,Y^6+X^2\,Y^4+X^3\,Y^3+X^5\,Y^2+X^8$\label{fig:Nn}}
  \end{subfigure}
  \caption{The modified and truncated Newton polygons\label{fig:NP}}
\end{figure}

\paragraph{Truncated Newton polygon.} In this paper, we will use low
truncation bounds; in particular, we may truncate some points of the
Newton polygon. In order to certify the correctness of the computed
slopes, we will use the following definition:
\begin{dfn}
  \label{dfn:truncN}
  Given $F\in\Ki[X,Y]$ and $n\in\Ni$, the \emph{$n$-truncated Newton
    polygon} of $F$ is the set $\Nn(F)$ composed of edges $\Delta$ of
  $\Nc(\tronc F n)$ that satisfy $\frac l q \leq n$ if $\Delta$
  belongs to the line $\edge m q l$. In particular, any edge of
  $\Nn(F)$ is an edge of $\Nc(F)$.
\end{dfn}
\begin{xmp}
  Let us consider $F_2=Y^{10}+X\,Y^6+X^2\,Y^4+X^3\,Y^3+X^5\,Y^2+X^8$
  and $n=7$. Figure \ref{fig:Nn} provides the truncated Newton polygon
  of $F_2$ with precision $7$. Here we have $\tronc{F_2}{7}=F_2-X^8$
  and $\Nc(\tronc{F_2}{7})=[(10,0),(6,1),(4,2),(3,3),(2,5)]$. But the
  edge $[(3,3),(2,5)]$ is not part of $\Nn[7](F_2)$, as it belongs to
  $\edge{2}{}{9}$, and that there are points $(i,j)$ so that
  $2\,i+\,j\leq 9$ and $j>7$: from the knowledge of $\tronc{F_2}{7}$,
  we cannot guarantee that $\Nc(F_2)$ contains an edge belonging to
  $\edge{2}{}{9}$. This is indeed wrong here, since
  $\Nc(F_2)=[(10,0),(6,1),(4,2),(3,3),(0,8)]$.
\end{xmp}


\subsection{Complexity model.}
\label{ssec:comp}
In this paper, we use two model of computations ; both are RAM models:
the algebraic RAM of Kaltofen \cite[Section 2]{Ka88} and the boolean
one. The latter is considered only for Corollaries \ref{cor:gMC} and
\ref{cor:gLV}, where we just estimate word operations generated by
arithmetic operations in various coefficient fields (assuming for
instance a constant time access to coefficients of polynomials).  For
the arithmetic model, we only count the number of arithmetic
operations (addition, multiplication, division) in our base field
$\Ki$. Most subalgorithms are deterministic; for them,
we consider the worst case. However, computation of primitive elements
uses a probabilistic of Las Vegas type algorithm. Their running times
depend on random choices of element in $\Ki$; hence, we use average
running times, that propagate to our main results.

Our complexity results use the classical notations $\O()$ and $\Ot()$
that respectively hide constant and logarithmic factors. See for
instance \cite[Chapter 25, Section 7]{GaGe13}.
\paragraph{Polynomial multiplication.} We finally recall some
classical complexity results, starting with the multiplication of
univariate polynomials:
\begin{dfn}
  \label{dfn:M}
  A (univariate) \emph{multiplication time} is a map
  $\M : \Ni \rightarrow \Ri$ such that:
  \begin{itemize}
  \item for any ring $\Ai$, polynomials of degree less than $d$ in
    $\Ai[X]$ can be multiplied in at most $\M(d)$ operations
    (multiplication or addition) in $\Ai$;%
  \item for any $0<d\leq d'$, the inequality
    $\M(d)\,d'\leq \M(d')\,d$ 
    holds.
  \end{itemize}
\end{dfn}
\begin{lem}
  \label{lem:M}
  Let $\M$ be a multiplication time. Then we have:
  \begin{enumerate}
  \item $\M(d+d')\geq\M(d)+\M(d')$ for any $d,d'\in\Ni$,
  \item $\M(1)+\M(2)+\cdots+\M(2^{k-1})+\M(2^k)\leq\M(2^{k+1})$ for
    any $k\in\Ni$.
  \end{enumerate}
\end{lem}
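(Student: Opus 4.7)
The plan is to derive both statements from the superlinearity condition $\M(d)\,d'\leq \M(d')\,d$ (valid for $0<d\leq d'$), which says exactly that the sequence $e\mapsto \M(e)/e$ is nondecreasing.

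For part (1), I would fix $d,d'\in\Ni$ and apply the superlinearity hypothesis twice, once with the pair $(d,d+d')$ and once with the pair $(d',d+d')$ (so both inequalities are in the allowed range $d\leq d+d'$ and $d'\leq d+d'$). Dividing gives
\[
\M(d)\leq \frac{d}{d+d'}\,\M(d+d'),\qquad \M(d')\leq \frac{d'}{d+d'}\,\M(d+d').
\]
Summing these two inequalities, the right-hand sides collapse to $\M(d+d')$, which is precisely part (1). The case where one of $d,d'$ is zero is trivial because $\M(0)=0$ (follows from the superlinearity with $d=0$ or, if one prefers, from the fact that multiplying a polynomial of degree $<0$ is free).

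For part (2), I would first observe that $\M$ is itself nondecreasing: indeed, for $d\leq d'$ the superlinearity gives $\M(d)\leq \M(d')\,d/d'\leq \M(d')$. Next, I would iterate part (1) to obtain, by an obvious induction on the number of summands,
\[
\M(d_1)+\M(d_2)+\cdots+\M(d_n)\leq \M(d_1+d_2+\cdots+d_n)
\]
for any $d_1,\dots,d_n\in\Ni$. Applying this with $d_i=2^{i-1}$ for $i=1,\dots,k+1$ yields
\[
\M(1)+\M(2)+\cdots+\M(2^k)\leq \M(1+2+\cdots+2^k)=\M(2^{k+1}-1)\leq \M(2^{k+1}),
\]
where the final inequality uses the monotonicity of $\M$ established above. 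This is part (2).

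The argument is entirely elementary; there is no real obstacle. The only point worth a line of justification is the monotonicity of $\M$, which is not explicitly part of the definition but drops out immediately from the superlinearity inequality by taking $d=d'$ on the left-hand side of the ratio. Once monotonicity is in hand, both statements are one-step consequences of the definition.
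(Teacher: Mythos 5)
Your proof is correct and matches the standard argument: the paper simply defers part~(1) to \cite[Exercise 8.33]{GaGe13}, whose solution is exactly the two applications of the superlinearity inequality that you wrote down, and treats part~(2) as a direct consequence via iteration and monotonicity, just as you do. One small imprecision: $\M(0)=0$ does not actually ``follow from the superlinearity with $d=0$'' since that hypothesis is stated only for $0<d\le d'$ and says nothing about $\M(0)$, but your alternative justification (multiplying degree-$<0$ polynomials is free) or simply restricting to $d,d'\ge 1$ disposes of the edge case.
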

\begin{proof}
  The first point is \cite[Exercise 8.33]{GaGe13}. The second one is
  a direct consequence.
\end{proof}
The best known multiplication time gives
$\M(d)\in\O(d\,\log(d)\,\log(\log(d))) \subset \Ot(d)$
\cite{ScSt71,CaKa90}. Note in particular that for this value of $\M()$, we do not have
$\M(d)\,\M(d')\leq\M(d\,d')$ but only
$\M(d)\,\M(d')\leq\M(d\,d')\log(d\,d')$. This is why we use Kronecker
substitution.
\paragraph{Multiplication of multivariate polynomials.} 
Consider two polynomials belonging to $\Ai[Z_1,\cdots,Z_s]$. Denote
$d_i$ a bound for their degrees in $Z_i$. Then, by Kronecker
substitution, they can be multiplied in less than
$\O(\M(2^{s-1}\,d_1\cdots d_s))$ operations in $\Ai$ (it is
straightforward to adapt \cite[Corollary 8.28, page 247]{GaGe13} to
any number of variables). In particular, if $s$ is constant, the
complexity bound is $\O(\M(d_1\cdots d_s))$.
\paragraph{Bivariate polynomials defined over an extension of $\Ki$.}
Given an irreducible polynomial $P\in\Ki[Z]$, we denote
$\Ki_P:=\Ki[Z]/(P(Z))$ and $\dg:=\deg_Z(P)$. In Sections \ref{sec:algo}
and \ref{sec:rnp3}, we multiply two polynomials in $\Ki_P[X,Y]$ as
follows: first perform the polynomial multiplication over $\Ki[X,Y,Z]$
as stated in the previous paragraph; then apply the reduction modulo
$P$ on each coefficient. Denoting $\dx$ (resp. $\dy$) a bound for the
degree in $X$ (resp. $Y$) of the considered polynomials, the total
cost is $\O(\M(\dx\,\dy\,\dg))$ (see \cite[Theorem 9.6, page
261]{GaGe13} for the second point).
\paragraph{Matrix multiplication.} Primitive elements computation are
expressed via the well known $2\leq \omega\leq 3$ exponent (so that
one can multiply two square matrices of size $d$ in less than
$\O(d^\omega)$ operations over the base ring). We have $\omega<2.373$
\cite{Le14}. Note however that our results do not require fast matrix
multiplication: they stand if we take $\omega=3$.

Finally, note that we postpone the discussion concerning the
complexity of operations modulo triangular sets (needed for dynamic
evaluation) in Section \ref{ssec:compD5}.



\section{Refined truncation bounds.}
\label{sec:algo}

We keep notations of Sections \ref{sec:intro}, \ref{ssec:puiseux} and
\ref{ssec:comp} ($\Ki_P$ and $\dg$). Additionally, we assume $F$ to be
monic. The aim of this section is to prove that we can compute at
least half of the Puiseux series of $F$ in less than $\Ot(\dy\,\vRF)$
arithmetic operations, not counting the factorisation of univariate
polynomials. Our algorithms and intermediate results will use the
following notion:

\begin{dfn}\label{dfn:precision}
  We say that $S_0\in\algclos{\Ki}((X^{1/e_0}))$ is a Puiseux series of
  $F$ known with precision $n$ if there exists a Puiseux series $S$ of
  $F$ s.t. $\tronc{S_0}{n} = \tronc{S}{n}$. We say that
  $R_0=(\gamma_0\,T^{e_0},\Gamma_0(T))$ is a \RPE{} of $F$ known with
  precision $n$ if $\tronc{\Gamma_0((X/\gamma_0)^{1/e_0})}{n}$ is a
  Puiseux series of $F$ known with precision $n$.
\end{dfn}

\begin{thm}
  \label{thm:half-Puiseux}
  There exists an algorithm that computes some \RPE{}s
  $R_1,\cdots,R_\lambda$ of $F$ known with precision at least
  $4\,\vRF/\dy$, containing their singular parts, and such that
  $\sum_{i=1}^\lambda e_i\,f_i\geq\frac\dy 2$. Not taking into account
  univariate factorisations, this can be done in an expected
  $\O(\M(\dy\,\vRF)\,\log(\dy)) \subset\Ot(\dy\,\vRF)$ arithmetic
  operations over $\Ki$.
\end{thm}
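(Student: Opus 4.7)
The plan is to run a variant of \RNP{} in which every intermediate polynomial is reduced modulo $X^n$ with $n:=\lceil 4\vRF/\dy\rceil$, and at each recursive node the truncated polygon $\Nn$ of Definition \ref{dfn:truncN} is used in place of $\Ns$. By construction, $\Nn$ returns only edges that are guaranteed to belong to the true Newton polygon, so every intermediate characteristic polynomial $\phi_\Delta$ computed from the truncation coincides with the exact one and no spurious branch is produced. A branch of the recursion tree that reaches the monic separation phase outputs a candidate \RPE{} $R_i$ known modulo $X^n$, so with precision at least $4\vRF/\dy$ in the sense of Definition \ref{dfn:precision}; we retain exactly those branches for which the regularity index satisfies $r_i/e_i\leq n$, which automatically guarantees that the singular part is contained in the output. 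The remaining branches are those cut short by the truncation barrier.

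The combinatorial heart of the proof is the lower bound $\sum_{\text{retained}}e_i f_i\geq\dy/2$. For each \RPE{} $R_i$ set $v_i:=\val F_Y(S)$ for any classical Puiseux series $S$ associated to $R_i$; summing $\val F_Y$ over the $\dy$ classical series gives
$$\vRF \;=\; \sum_{i=1}^{\rho} e_i f_i\, v_i.$$
Since $F$ is monic, $F_Y(S)=\prod_{S'\neq S}(S-S')$ yields $v_i=\sum_{S'\neq S}\val(S-S')$, a sum of non-negative terms whose maximum equals $r_i/e_i$ by Definition \ref{dfn:singRPE}. In particular $v_i\geq r_i/e_i$, and for any discarded branch ($r_i/e_i>n$) we have $v_i>n$, whence
$$\vRF \;>\; n \sum_{i\,:\,r_i/e_i>n} e_i f_i.$$
Plugging in $n\geq 4\vRF/\dy$ yields $\sum_{i\,:\,r_i/e_i>n}e_i f_i<\dy/4$, and therefore $\sum_{\text{retained}}e_i f_i>3\dy/4\geq\dy/2$.

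For the complexity, at each recursive node we manipulate a polynomial of local $Y$-degree $d_s$ over an extension of degree $\dg_s$ truncated at local $X$-precision $n_s$; by the discussion in Section \ref{ssec:comp}, the Puiseux transform, the truncated Newton polygon and the factorisation-free part of the characteristic polynomial computation each cost $\O(\M(d_s\, n_s\, \dg_s))$ arithmetic operations in $\Ki$. The key invariant, inherited from the standard analysis of \RNP{}, is that at every fixed depth of the tree one has $\sum_s d_s\, n_s\, \dg_s = \O(\dy\, n)=\O(\vRF)$: for each edge $\Delta$ of slope $-m/q$ the identity $q\cdot\deg(\phi_\Delta)=\text{width}(\Delta)$ and the multiplicativity of the field extensions force conservation of this weighted sum through the Puiseux transforms. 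Lemma \ref{lem:M} collapses each per-depth sum into $\O(\M(\vRF))$, and aggregating over the $\O(\log\dy)$ levels enforced by the truncation cap gives the announced $\O(\M(\dy\,\vRF)\log(\dy))\subset\Ot(\dy\,\vRF)$ bound; univariate factorisations of the $\phi_\Delta$'s are not counted.

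The main obstacle is technical rather than conceptual: one must verify in detail that the local precisions $n_s$ really evolve under the Puiseux transforms $X\gets\xi^v X^q$ in a way compatible with the per-depth invariant $\sum d_s\,n_s\,\dg_s=\O(\dy\,n)$, and that the edges returned by $\Nn$ at each recursive node are exactly those needed to locate every retained \RPE{}. Together with the cost analysis of field extensions and of arithmetic over $\Ki_P$ inherited from Section \ref{ssec:comp}, this closes the argument.
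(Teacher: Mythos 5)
Your combinatorial argument and overall plan follow the paper's strategy, but there are two linked errors that make the precision claim of the theorem fail.

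First, the truncation bound. You run the algorithm at $n=\lceil 4\vRF/\dy\rceil$, whereas the paper's proof runs \arnp{} at $n=6\vRF/\dy$ (Proposition~\ref{prop:arnp-correctness}). The reason is precisely the precision-loss phenomenon you do not track: a branch that begins with truncation $n$ and undergoes the Puiseux transform $X\gets \xi^v X^q$ on an edge $mi+qj=l$ only preserves precision $n_1=qn-l$ in the rescaled variable; accumulating these losses along the recursion gives a final precision of $n-v_i$ for the expansion $R_i$, not $n$ (this is the content of Lemma~\ref{lem:tronc}). Your statement that an output RPE is ``known modulo $X^n$, so with precision at least $4\vRF/\dy$'' is therefore false: with your choice of $n$, the precision delivered is $4\vRF/\dy-v_i$, which for a branch with $v_i$ close to $n$ can be nearly $0$. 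To guarantee the theorem's precision bound $4\vRF/\dy$ for the retained RPEs (those with $v_i<2\vRF/\dy$), the algorithm must be run at $n=6\vRF/\dy$, so that $n-v_i>4\vRF/\dy$.

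Second, the discard criterion. You retain a branch when $r_i/e_i\le n$, but the quantity that governs whether \arnp{} can certify the singular part at truncation $n$ is $N_i=r_i/e_i+v_i$ (Lemma~\ref{lem:Ni-vFi} and Lemma~\ref{lem:tronc}), not $r_i/e_i$; the algorithm outputs exactly those $R_i$ with $N_i\le n$. Since $N_i\ge r_i/e_i$, your criterion overcounts the retained set. The bound can still be rescued by replacing $r_i/e_i>n$ with $N_i>n$ and using $N_i\le 2v_i$ (Corollary~\ref{cor:Ni-vi}) to deduce $v_i>n/2$ for discarded branches, giving $\sum_{N_i>n}e_if_i<2\vRF/n=\dy/2$ when $n=4\vRF/\dy$; this yields $\sum_{\text{retained}}e_if_i>\dy/2$ (not the $3\dy/4$ you claim). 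But this fix only addresses the count; the precision deficiency from the wrong truncation bound remains and is fatal to the theorem's statement as written.

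On the complexity side your per-depth invariant $\sum_s d_s n_s \dg_s=\O(\dy n)$ is essentially the right heuristic, but the paper achieves the $\log(\dy)$ factor by a different bookkeeping: it distinguishes recursion steps with $q=d_\phi=1$ (bounded by $\rho-1$ occurrences via the Abhyankar trick, Lemma~\ref{lem:abhyankar}) from steps with $q>1$ or $d_\phi>1$ (at most $\log(e_if_i)\le\log\dy$ per branch). Your attribution of the $\log\dy$ depth to ``the truncation cap'' is not a valid replacement for this argument.
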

Algorithm \arnp{} in Section \ref{ssec:algo} will be such an
algorithm. It uses previous improvements by the first author and
M. Rybowicz \cite{PoRy08,PoRy11,PoRy15}, and one additional idea,
namely Idea \ref{id:fact} of Section \ref{sec:intro}.
\subsection{Previous complexity improvements and Idea \ref{id:fact}.}
\label{ssec:previous}
\begin{lem}
  \label{lem:onesubs}
  Let $n\in \Ni$, $F\in\Ki_P[X,Y]$ and $\xi\in\Ki_P$ for some
  irreducible $P\in\Ki[Z]$. Denote $\Delta$ an edge of $\Nc(F)$
  belonging to \edge{m}{q}{l}, and $(u,v)$=\bezout$(m,q)$. The Puiseux
  transform $F(\xi^v X^q,X^m (\xi^u +Y))/X^l$ modulo $X^n$ can be
  computed as $n$ univariate polynomial shifts over $\Ki_P$. It takes
  less than $\O(n\,\M(\dy\,\dg))$ operations over $\Ki$.
\end{lem}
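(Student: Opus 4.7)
The plan is to decompose the Puiseux transform into one monomial substitution followed by one univariate shift in $Y$, and then implement the shift fast via Kronecker substitution. Set $G(X,Y):=F(\xi^v X^q,X^mY)$, so that $F(\xi^v X^q,X^m(\xi^u+Y)) = G(X,Y+\xi^u)$ and the Puiseux transform equals $G(X,Y+\xi^u)/X^l$. Writing $F=\sum c_{ij}X^jY^i$, one has $G=\sum c_{ij}\xi^{vj}X^{qj+mi}Y^i$. Because $\Delta$ lies on the lower part of $\Nc(F)$, every $(i,j)$ in the support of $F$ satisfies $qj+mi\geq l$, so $X^l$ divides $G$ and $H:=G/X^l$ is a polynomial in $\Ki_P[X,Y]$. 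Since a $Y$-shift preserves the $X$-valuation, the Puiseux transform modulo $X^n$ is obtained by first computing $H\bmod X^n$ and then applying the shift $Y\mapsto Y+\xi^u$ to each of its $n$ coefficients in $X$.

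For the first step, I retain only pairs $(i,j)$ satisfying $qj+mi-l<n$ and add the contribution $c_{ij}\xi^{vj}$ to the coefficient of $X^{qj+mi-l}Y^i$ in $H$. Precomputing the required powers of $\xi^v$ in $\Ki_P$ by successive multiplication costs $O(n\,\M(\dg))$ operations over $\Ki$, and the $O(n\,\dy)$ contributions each require one multiplication in $\Ki_P$. Using the inequality $\dy\,\M(\dg)\leq\M(\dy\,\dg)$ (which follows from Lemma~\ref{lem:M} applied to $\dg\leq\dy\,\dg$), the total cost of this step is $\O(n\,\M(\dy\,\dg))$.

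Writing $H(X,Y)=\sum_{k=0}^{n-1} b_k(Y)X^k$ with $b_k\in\Ki_P[Y]$ of degree $\leq\dy$, the second step reduces to the $n$ univariate shifts $b_k(Y)\mapsto b_k(Y+\xi^u)$ over $\Ki_P$, proving the first claim of the lemma. The main obstacle, and the step responsible for the tight bound, is to show that one such shift costs $\O(\M(\dy\,\dg))$ operations in $\Ki$: the naive cost of $\O(\M(\dy)\,\M(\dg))$ would be too large for a quasi-linear $\M$. The classical Taylor shift formula expresses the coefficients of $b_k(Y+c)$ as a single convolution $q_j\cdot j!=\sum_{i\geq j}(i!\,b_{k,i})\cdot(c^{i-j}/(i-j)!)$, hence as one polynomial multiplication of degree $\leq\dy$ over $\Ki_P$; the division by factorials is legitimate thanks to the assumption $p=0$ or $p>\dy$. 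The powers $\xi^u,(\xi^u)^2,\ldots,(\xi^u)^\dy$ needed for all $n$ shifts are computed once for all in $O(\dy\,\M(\dg))$ operations.

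To multiply two polynomials in $\Ki_P[Y]$ of $Y$-degree $\leq\dy$, encode each coefficient in $\Ki_P$ as a polynomial of degree $<\dg$ in $Z$: the operands become bivariate in $(Y,Z)$ of $Y$-degree $\leq\dy$ and $Z$-degree $<\dg$. Kronecker substitution $Y\mapsto Z^{2\dg}$ turns the product into a single univariate multiplication over $\Ki$ of polynomials of degree $\O(\dy\,\dg)$, done in $\O(\M(\dy\,\dg))$ operations. Reducing the resulting $\dy+1$ coefficients (as polynomials of degree $<2\dg$ in $Z$) modulo $P$ costs $O(\dy\,\M(\dg))\subset\O(\M(\dy\,\dg))$ by Lemma~\ref{lem:M}. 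This yields the cost $\O(\M(\dy\,\dg))$ per shift, which, multiplied by the $n$ shifts and added to the cost of the first step, gives the claimed bound $\O(n\,\M(\dy\,\dg))$.
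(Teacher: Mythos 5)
Your proof is correct and follows essentially the same route as the paper, which simply refers to \cite{PoRy11}, Figure~\ref{fig:rnpshift}, and a remark that Kronecker substitution is used: you factor the transform into a monomial substitution plus $n$ univariate $Y$-shifts, and implement each shift via the Taylor/convolution formula and Kronecker substitution over $\Ki_P$. One small imprecision: the claim that precomputing the powers of $\xi^v$ costs $O(n\,\M(\dg))$ tacitly assumes the range of required exponents $j$ has length $O(n)$, which is not guaranteed by the lemma's hypotheses alone (it is of order $(n+m\,\dy)/q$, also bounded by $\deg_X F$); the always-valid bound $O(n\,\dy\,\M(\dg)) \subset O(n\,\M(\dy\,\dg))$ suffices and leaves the final estimate unchanged.
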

\begin{proof}
  This is \cite[Lemma 2, page 210]{PoRy11}; Figure \ref{fig:rnpshift}
  illustrates the idea. Complexity also uses Kronecker substitution.
\end{proof}
Using the \emph{Abhyankar's trick} \cite[Chapter 12]{Ab90}, we reduce
the number of recursive calls of the rational Newton--Puiseux algorithm
from $\vRF$ to $\O(\rho\,\log(\dy))$.
\begin{lem}
  \label{lem:abhyankar}
  Let $F=Y^\dy+\sum_{i=0}^{\dy-1} A_i(X)\,Y^i\in\Ki[X,Y]$ with
  $\dy>1$. If the Newton polygon of $F(X,Y-A_{\dy-1}/\dy)$ has a
  unique edge $(\Delta)\, \edge{m}{q}{l}$ with $q=1$, then
  $\phi_\Delta$ has several roots in $\algclos{\Ki}$.
\end{lem}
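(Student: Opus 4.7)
My plan is to leverage the defining property of the Abhyankar substitution — namely, that $Y \mapsto Y - A_{\dy-1}/\dy$ annihilates the $Y^{\dy-1}$ coefficient — and then to read off the coefficient of $T^{\dy - i_0 - 1}$ in $\phi_\Delta$. The hypothesis $p = 0$ or $p > \dy$ is used twice: first to make the substitution well-defined (one needs $\dy$ invertible in $\Ki$), and second at the very end to turn the vanishing of that coefficient into the equation $\alpha = 0$.

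First I would set $\widetilde{F}(X,Y) := F(X,\, Y - A_{\dy-1}/\dy)$ and write $\widetilde{F} = \sum_{i,j} \widetilde{\alpha}_{ij}\, X^j Y^i$. A direct expansion gives $\widetilde{\alpha}_{\dy-1, j} = 0$ for every $j \ge 0$; in particular, no support point of $\widetilde{F}$ has first coordinate $\dy - 1$. Next, since $\widetilde{F}$ remains monic of degree $\dy$ in $Y$, the point $(\dy, 0)$ is a vertex of $\Nc(\widetilde{F}) = \{\Delta\}$, and combined with $\Delta \subset \{m a + b = l\}$ this forces $l = m\dy$. Letting $(i_0, j_0)$ denote the other endpoint of $\Delta$, one has $(i_0,j_0) \in \mathrm{Supp}(\widetilde F)$, $i_0 < \dy$, $j_0 = m(\dy - i_0)$, and $\widetilde{\alpha}_{i_0, j_0} \ne 0$. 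The remark above moreover forces $i_0 \le \dy - 2$, hence $\dy - i_0 \ge 2$, and
\[
\phi_\Delta(T) \;=\; \sum_{a = i_0}^{\dy} \widetilde{\alpha}_{a,\, m(\dy - a)}\, T^{a - i_0}
\]
is monic of degree $\dy - i_0 \ge 2$ with non-zero constant term.

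The argument concludes by contradiction. Suppose $\phi_\Delta$ has a single root $\alpha \in \algclos{\Ki}$; monicity gives $\phi_\Delta(T) = (T - \alpha)^{\dy - i_0}$, and the non-zero constant term forces $\alpha \ne 0$. Now inspect the coefficient of $T^{\dy - i_0 - 1}$: by the binomial theorem it equals $-(\dy - i_0)\,\alpha$, while from the defining sum for $\phi_\Delta$ it equals $\widetilde{\alpha}_{\dy - 1,\, m}$ (the lattice point $(\dy - 1, m)$ lies on $\Delta$ since $m(\dy-1) + m = m\dy = l$). The latter vanishes by the Abhyankar step, so $-(\dy - i_0)\,\alpha = 0$ in $\Ki$, and since $0 < \dy - i_0 \le \dy$ with $p = 0$ or $p > \dy$, the factor $\dy - i_0$ is invertible, giving $\alpha = 0$ — a contradiction. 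There is no real obstacle in this proof; the one point deserving attention is ensuring that $\dy - i_0 \ge 2$, so that $T^{\dy - i_0 - 1}$ is a strictly interior coefficient of $\phi_\Delta$ distinct from both its leading and its constant term, which is precisely what the Abhyankar step delivers.
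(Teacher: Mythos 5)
Your proof is correct; it is the standard derivation of Abhyankar's trick. The paper states this lemma without an explicit proof (citing \cite[Chapter 12]{Ab90} for the Tschirnhausen transform), and your contradiction argument — if $\phi_\Delta = (T-\alpha)^{\dy-i_0}$ then the vanishing of the $T^{\dy-i_0-1}$-coefficient gives $(\dy-i_0)\,\alpha=0$, hence $\alpha=0$ since $0<\dy-i_0\leq\dy$ is invertible under $p=0$ or $p>\dy$, contradicting the non-zero constant term $\widetilde\alpha_{i_0,j_0}$ — is precisely the intended one, including the observation that $\dy-i_0\geq 2$ so that the subleading coefficient is genuinely interior.
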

In other words, after performing the Tschirnausen transform
$Y\gets{}Y-A_{\dy-1}/\dy$, we are sure to get at least either a branch
separation, a non integer slope, or a non trivial factor of the
characteristic polynomial. This happens at most $\O(\rho\,\log(\dy))$
times.
\begin{xmp} \label{xmp:abhyankar} Let's consider once again the
  polynomial $F_1$ of Example \ref{xmp:ri}. Its Newton polygon has a
  unique edge with integer slope, and the associated characteristic
  polynomial has a unique root. The Abhyankar's trick is applied with
  $\frac 1 3\,A_2 = X+X^2+X^3+2\,X^4+20\,X^5+20\,X^6+20\,X^7$. Then,
  the shifted polynomial has still a unique edge, but its
  characteristic polynomial has two different roots: it separates
  $S_3$ from the two other Puiseux series.%
\end{xmp}
\begin{lem}
  \label{lem:bivshift}
  Let $F=Y^\dy+\sum_{i=0}^{\dy-1} A_i(X)\,Y^i\in\Ki_P[X,Y]$. One can
  compute the truncated shift $\tronc{F(X,Y-A_{\dy-1}/\dy)}{n}$ in less than
  $\O(\M(n\,\dy\,\dg{}))$ operations over $\Ki$.
\end{lem}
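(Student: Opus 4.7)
The plan is to reduce the bivariate shift to a single polynomial multiplication via the classical convolution formula for Taylor shifts, and then to compress all three variables $X$, $Y$ and $Z$ (recall $\Ki_P=\Ki[Z]/(P(Z))$) into a single univariate multiplication through Kronecker substitution. A naive Horner-type evaluation in $Y$ would cost $\O(\dy\,\M(n\,\dy\,\dg))$, which is too much; the gain comes from the convolution rewriting.

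First I would set $B(X)=-A_{\dy-1}(X)/\dy \in \Ki_P[X]$, which is well defined since $p=0$ or $p>\dy$, and truncate $B$ modulo $X^{n+1}$ (anything higher cannot affect $\tronc{F(X,Y+B)}{n}$). Writing $F=\sum_{i=0}^{\dy} A_i(X)Y^i$, the sought polynomial $G=\tronc{F(X,Y+B)}{n}=\sum_j g_j(X) Y^j$ satisfies
\begin{equation*}
g_j \;=\; \sum_{i=j}^{\dy} \binom{i}{j}\, A_i\, B^{\,i-j} \ \bmod X^{n+1}.
\end{equation*}
Introducing the auxiliary polynomials in a fresh variable $T$,
\begin{equation*}
\Phi(T)\;=\;\sum_{i=0}^{\dy} (i!\,A_i)\, T^{\dy-i}, \qquad \Psi(T)\;=\;\sum_{k=0}^{\dy} \frac{B^k}{k!}\, T^k,
\end{equation*}
a direct check using $\binom{i}{j}=i!/(j!(i-j)!)$ shows that the coefficient of $T^{\dy-j}$ in $\Phi\Psi$ is exactly $j!\,g_j$. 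The inverses $1/k!$ and $1/j!$ exist in $\Ki$ for $k,j\le \dy$ under the standing assumption on $p$.

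Next I would bound the three subtasks. The powers $B^0,B^1,\dots,B^{\dy}$ modulo $X^{n+1}$ are computed incrementally by $\dy$ multiplications in $\Ki_P[X]/(X^{n+1})$, each of cost $\O(\M(n\,\dg))$ via Kronecker substitution in $(X,Z)$; by Lemma \ref{lem:M} (super-multiplicativity of $\M$) the total is $\O(\dy\,\M(n\,\dg))\subseteq \O(\M(n\,\dy\,\dg))$. Forming $\Phi$ and $\Psi$ from this data is linear. The main step is the product $\Phi\Psi$ in $\Ki_P[X][T]$ with truncation $\bmod X^{n+1}$: this is the multiplication of two polynomials whose degrees in $X$, $Z$, $T$ are bounded by $n$, $\dg-1$, $\dy$ respectively. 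Kronecker substitution in the three variables $(X,Z,T)$ encodes each operand as a single polynomial in $\Ki[U]$ of degree $\O(n\,\dg\,\dy)$, so the product costs $\O(\M(n\,\dy\,\dg))$. Reading back the coefficients and reducing the $\O(n\,\dy)$ entries modulo $P(Z)$ costs at most $\O(n\,\dy\,\M(\dg))\subseteq\O(\M(n\,\dy\,\dg))$, and the final scaling by $1/j!$ is linear.

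The one delicate point will be to verify that the triple Kronecker substitution is set up with sufficient slack so that no carry occurs between the $(X,Z,T)$ slots before the final modular reductions; this only requires choosing the substitution base larger than twice the corresponding output degree in each variable, which contributes a constant factor absorbed by the $\O(\cdot)$. Summing the three contributions yields the announced bound $\O(\M(n\,\dy\,\dg))$.
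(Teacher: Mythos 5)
Your proof is correct and follows essentially the same route as the paper: the paper's proof simply cites Bini and Pan (Problem 2.6), which is precisely the convolution rewriting $\,j!\,g_j = [T^{\dy - j}](\Phi\Psi)\,$ you spell out, and then invokes the Kronecker-substitution multiplication of $\Ki_P[X,T]$-polynomials described in Section~\ref{ssec:comp}. Your treatment is merely more explicit — in particular you carefully account for the $\O(\dy\,\M(n\,\dg))$ cost of precomputing the powers of the (non-scalar) shift amount $B(X)$, and you check via the monotonicity property of $\M$ in Definition~\ref{dfn:M} that this and the final mod-$P$ reductions stay within $\O(\M(n\,\dy\,\dg))$, steps the paper leaves implicit.
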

\begin{proof}
  From our assumption on the characteristic of $\Ki$, this computation
  can be reduced to bivariate polynomial multiplication via
  \cite[Problem 2.6, page 15]{BiPa94}. The result follows (see Section
  \ref{ssec:comp}).
\end{proof}
In order to provide the monicity assumption of Lemma
\ref{lem:abhyankar}, the well-known Weierstrass preparation theorem
\cite[Chapter 16]{Ab90} is used.
\begin{prop}
  \label{prop:wpt}
  Let $G \in \Ki_P[X,Y]$ not divisible by $X$. There exists unique
  $\Gh$ and $U$ in $\Ki_P[[X]][Y]$ s.t.  $G = \Gh\,U$, with
  $U(0,0)\neq 0$ and $\Gh$ a Weierstrass polynomial of degree
  $\deg_Y(\Gh)=\val[Y](G(0,Y))$. Moreover, \RPE s of $G$ and $\Gh$
  centered at $(0,0)$ are the same.
\end{prop}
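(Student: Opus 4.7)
The plan is to produce $(\Gh, U)$ by a single application of Hensel's lemma over the complete local ring $\Ki_P[[X]]$, and then to deduce the RPE claim from the fact that $U$ is a unit at the origin.

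First I would set up the residual factorisation. Since $G$ is not divisible by $X$, the specialisation $G(0,Y)\in\Ki_P[Y]$ is nonzero; setting $d := \val[Y](G(0,Y))$, I can write $G(0,Y) = Y^d\,V(Y)$ with $V \in \Ki_P[Y]$ and $V(0) \neq 0$. The factors $Y^d$ and $V$ are then coprime in $\Ki_P[Y]$, since any common factor would have to vanish at $Y=0$, contradicting $V(0)\neq 0$.

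Next I would invoke Hensel's lemma in $\Ki_P[[X]][Y]$ for the $X$-adic topology: the coprime factorisation $G \equiv Y^d\cdot V \pmod{X}$ lifts uniquely to a factorisation $G = \Gh\,U$ with $\Gh, U \in \Ki_P[[X]][Y]$, $\Gh$ monic of $Y$-degree $d$, $\deg_Y U = \deg_Y V$, $\Gh \equiv Y^d \pmod{X}$ and $U \equiv V \pmod{X}$. The congruence $\Gh \equiv Y^d \pmod{X}$ is exactly the Weierstrass polynomial condition, giving $\deg_Y \Gh = d = \val[Y](G(0,Y))$, and the congruence $U \equiv V \pmod{X}$ yields $U(0,0) = V(0) \neq 0$; uniqueness of the pair $(\Gh, U)$ is inherited from that of the Hensel lift.

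For the last claim, let $R(T) = (\gamma T^e, Y(T))$ be a rational Puiseux expansion of $G$ centered at $(0,0)$, so $\gamma \neq 0$ and $\ordre[T](Y(T)) \geq 1$. The substitution $U(\gamma T^e, Y(T))$ has constant term $U(0,0) \neq 0$, hence is a unit in the relevant formal power series ring over a finite extension of $\Ki_P$; therefore $G(\gamma T^e, Y(T)) = 0$ if and only if $\Gh(\gamma T^e, Y(T)) = 0$. Conversely, any parametrisation of $\Gh$ is automatically one of $G = \Gh\,U$, so the \RPE s of $G$ and of $\Gh$ centered at $(0,0)$ coincide. The main obstacle will be checking carefully that the Hensel lift delivers $\Gh$ genuinely monic in $Y$ of the prescribed degree $d$, but this is standard in the complete local ring $\Ki_P[[X]]$ once the coprimality of $(Y^d, V)$ modulo $X$ has been secured.
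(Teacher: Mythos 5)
The paper does not actually prove Proposition~\ref{prop:wpt}: it is stated as the classical Weierstrass preparation theorem with a pointer to \cite[Chapter 16]{Ab90}, and the only argument in the vicinity concerns its algorithmic counterpart (Proposition~\ref{prop:wptalgo}), where the authors invoke \cite[Theorem 15.18]{GaGe13} and \cite[Algorithm Q]{Mu75} and explicitly flag that $\lc{Y}{G}$ need not be a unit. Your derivation of the existence/uniqueness part via Hensel's lemma over the complete local ring $\Ki_P[[X]]$ is the standard route and is sound, and your argument for the claim about RPEs centered at $(0,0)$ --- that $U(\gamma T^e, Y(T))$ has invertible constant term $U(0,0)$ because $Y(0)=0$, so that $G$ and $\Gh$ vanish simultaneously along any such parametrisation --- is exactly right, including the observation that every parametrisation of a Weierstrass $\Gh$ is automatically centered at $(0,0)$.

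One small slip: the assertion $\deg_Y U = \deg_Y V$ is false in general. When $\lc{Y}{G}$ vanishes at $X=0$ the residual degree drops, i.e.\ $\deg_Y G(0,Y) < \deg_Y G$, and then $\deg_Y U = \deg_Y G - d$ strictly exceeds $\deg_Y V = \deg_Y G(0,Y) - d$ (e.g.\ $G=XY^2+Y$ gives $d=1$, $V=1$, but $U=XY+1$). The correct version of Hensel's lemma to invoke is the complete-local-ring one where the \emph{monic} factor $Y^d$ lifts to a monic $\Gh$ of the same degree and the cofactor is recovered as $U=G/\Gh$ by Euclidean division (legitimate because $\Gh$ is monic), with only the congruence $U\equiv V \pmod X$ surviving --- which is all you actually use to get $U(0,0)=V(0)\neq 0$, so the rest of your argument is unaffected. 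It is worth noting that this is precisely the subtlety the paper addresses in the proof of Proposition~\ref{prop:wptalgo}, since the textbook algorithmic Hensel of \cite{GaGe13} assumes a unit leading coefficient.
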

The following result provides a complexity bound.
\begin{prop}
  \label{prop:wptalgo} Let $G\in \Ki_P[X,Y]$ as in Proposition
  \ref{prop:wpt} and $n\in\Ni$.  Denote $\Gh$ the Weierstrass
  polynomial of $G$.  There exists an algorithm \wpt{} that computes
  $\tronc{\Gh}{n}$ in less than $\O(\M(n\,\deg_Y(G)\,\dg))$ operations
  in $\Ki$.
\end{prop}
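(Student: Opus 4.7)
The plan is to realise \wpt{} as a fast Hensel lifting of the factorisation of $G$ modulo $X$. Set $d:=\val[Y](G(0,Y))$, which is also $\deg_Y(\Gh)$ by Proposition \ref{prop:wpt}. Writing $G(0,Y)=Y^d\,\Ut(Y)$ with $\Ut(0)\neq 0$, we obtain a factorisation of $G$ in $\Ki_P[X,Y]$ modulo $X$ into two monic-up-to-unit coprime factors (coprimality holds since $\Ut(0)\neq 0$, so $Y\nmid\Ut(Y)$ in $\Ki_P[Y]$). A B\'ezout relation between $Y^d$ and $\Ut(Y)$ modulo $X$ can be precomputed once in $\O(\M(d_Y)\log d_Y)$ operations over $\Ki_P$.

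Starting from this factorisation $(\Gh_0,U_0)=(Y^d,\Ut(Y))$ at precision $X^1$, I would apply the standard quadratic-convergence Hensel (Newton) iteration to produce sequences $(\Gh_k,U_k)$ with $G\equiv \Gh_k\,U_k \pmod{X^{2^k}}$, where $\Gh_k$ is monic of degree $d$ in $Y$ and $\deg_Y(U_k)\le \deg_Y(G)-d$. Monicity and the degree of $\Gh_k$ are preserved throughout by choosing the usual normalised update of the two cofactors (reducing the correction to $\Gh_k$ modulo $\Gh_{k-1}$). Iterating until $2^k\ge n$ and truncating at the last step gives $\tronc{\Gh}{n}$; uniqueness in Proposition~\ref{prop:wpt} guarantees correctness.

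For the cost, one Hensel step at precision $X^{2^k}$ reduces to a constant number of multiplications in $\Ki_P[X,Y]/(X^{2^k})$ of polynomials of $Y$-degree at most $\deg_Y(G)$. Encoding the extension via Kronecker substitution (Section~\ref{ssec:comp}), each such multiplication is performed in $\O(\M(2^k\,\deg_Y(G)\,\dg))$ operations over $\Ki$. Summing over $k=1,\dots,\lceil\log_2 n\rceil$ and applying part~2 of Lemma~\ref{lem:M} yields a total cost of $\O(\M(n\,\deg_Y(G)\,\dg))$ operations in $\Ki$, absorbing the $\O(\log n)$ factor.

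The main technical point to check is that the Hensel lifting is genuinely applicable at the singular fiber: although $Y=0$ is a root of $G(0,Y)$, the two initial factors $Y^d$ and $\Ut(Y)$ remain coprime in $\Ki_P[Y]$, so the classical lifting applies without modification. Preserving monicity and the exact degree $d$ of $\Gh_k$ at every step is what forces the reduction of the Newton correction modulo $\Gh_{k-1}$; this is the only subtlety, and it is handled by the standard presentation of fast Hensel lifting for the Weierstrass factor.
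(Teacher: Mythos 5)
Your proposal follows essentially the same route as the paper: the paper's proof is a citation to the fast Weierstrass preparation of \cite[Theorem 15.18]{GaGe13} (together with Musser's Algorithm~Q and Kronecker substitution), which is precisely the quadratically convergent Hensel lifting of $G(0,Y)=Y^d\,\Ut(Y)$ that you spell out. The one point the paper explicitly flags and you treat only implicitly is that $\lc_Y(G)$ need not be a unit in $\Ki_P[[X]]$, so \cite[Theorem 15.18]{GaGe13} cannot be invoked verbatim; one must instead run the lifting formulas of \cite[Algorithm 15.10]{GaGe13} in the Musser normalisation, keeping $\Gh_k$ monic of degree $d$ and letting $U_k$ carry a leading $Y$-coefficient with possibly positive $X$-valuation (so $\deg_Y U_k\le \deg_Y(G)-d$ only as an upper bound). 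Your remark about reducing the Newton correction to $\Gh_k$ modulo $\Gh_{k-1}$ is indeed the mechanism that handles this, so your argument is correct, just slightly less explicit than the paper about why the off-the-shelf statement of the theorem does not apply.
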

\begin{proof}
  This is \cite[Theorem 15.18, page 451]{GaGe13}, using Kronecker
  substitution for multivariate polynomial multiplication. This
  theorem assumes that $\lc Y G$ is a unit, which is not necessarily
  the case here. However, formul\ae{} in \cite[Algorithm 15.10, pages
  445 and 446]{GaGe13} can still be applied in our context: this is
  exactly \cite[Algorithm Q, page 33]{Mu75}.
\end{proof}
\paragraph{Representation of residue fields.} As explained in
\cite[Section 5.1]{PoRy11}, representing residue fields as multiple
extensions can be costly. Therefore, we need to
compute primitive representations each time we get a characteristic
polynomial $\phi$ with degree $2$ or more. Note that algorithms we use
here are Las-Vegas (this is the only probabilistic part concerning our
results on Puiseux series computation).
\begin{prop}
  \label{prop:primeltKi}
  Let $P\in \Ki[Z]$ and $\phi\in\Ki_P[W]$ be two irreducible
  polynomials of respective degrees $d_P=\deg_Z(P)$ and
  $d_{\phi}=\deg_W(\phi)$. Denote $d=d_P\,d_{\phi}$, and assume that
  there are at least $d^2$ elements in $\Ki$. There exists a Las-Vegas
  algorithm \PrimElt{} that computes an irreducible polynomial
  $P_1\in \Ki[Z]$ with degree $d$ together with an isomorphism
  $\Psi:\Ki_{P,\phi}\simeq \Ki_{P_1}$. It takes an expected
  $\O(d^{\frac{\omega+1}2})$ arithmetic operations plus a constant
  number of irreducibility tests in $\Ki[Z]$ of degree at most
  $d$. Moreover, given $\alpha\in \Ki_{P,\phi}$, one can compute
  $\Psi(\alpha)$ with $\O(d_P\,\M(d))$ operations over $\Ki$.
\end{prop}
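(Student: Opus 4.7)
The plan is a randomised primitive-element construction combined with a fast minimal-polynomial computation. Denote by $\bar Z, \bar W$ the classes of $Z,W$ in $\Ki_{P,\phi}$; since $\Ki$ is perfect, the extension $\Ki_{P,\phi}/\Ki$ is separable of degree $d$, and a generic $\Ki$-linear combination of $\bar Z$ and $\bar W$ generates it. First I would pick $\lambda$ uniformly at random in a subset of $\Ki$ of size $d^2$ and set $\theta_\lambda := \bar W + \lambda\,\bar Z$. A standard conjugacy argument shows that $\theta_\lambda$ fails to have $d$ distinct $\Ki$-conjugates in $\algclos\Ki$ only when $\lambda$ equals one of at most $\binom{d}{2}$ specific values (the ratios $(\sigma_j(\bar W)-\sigma_i(\bar W))/(\sigma_i(\bar Z)-\sigma_j(\bar Z))$ over pairs of distinct embeddings $\sigma_i \neq \sigma_j$), so $\theta_\lambda$ is primitive with probability at least $1/2$ under $|\Ki|\geq d^2$ and an expected constant number of trials suffice. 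This is the unique source of randomness, hence the Las-Vegas nature.

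For each trial I would compute the minimal polynomial $P_1\in\Ki[Z]$ of $\theta_\lambda$ via a baby-step/giant-step scheme \`a la Shoup and Kaltofen--Shoup: build the Krylov sequence $1,\theta_\lambda,\ldots,\theta_\lambda^{d-1}$ in the tensor basis $(\bar Z^a\bar W^b)_{a<d_P,\,b<d_\phi}$ using iterated squaring combined with modular composition inside $\Ki_{P,\phi}$, then extract $P_1$ by solving the associated Hankel system in Berlekamp--Massey style. Standard analyses fit this into $\O(d^{(\omega+1)/2})$ arithmetic operations over $\Ki$. Because $\Ki_{P,\phi}$ is a field, $\deg P_1 = d$ automatically forces $P_1$ to be irreducible (the injection $\Ki[Z]/P_1 \hookrightarrow \Ki_{P,\phi}$ must then be surjective), so one irreducibility test in $\Ki[Z]$ both certifies primitivity and concludes the loop; if the test fails, retry.

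For the explicit isomorphism $\Psi:\Ki_{P,\phi}\xrightarrow{\sim}\Ki_{P_1}$, I would record alongside $P_1$ the images $Z_{P_1}:=\Psi(\bar Z)$ and $W_{P_1}:=\Psi(\bar W)\in\Ki_{P_1}$. The element $Z_{P_1}$ is the unique root of $P(Z)$ in $\Ki_{P_1}$ compatible with the embedding induced by $\theta_\lambda$, extractable at negligible cost from the subresultant chain produced by the min-poly computation, after which $W_{P_1}=\bar T-\lambda Z_{P_1}$ follows automatically (with $\bar T$ the class of $T$ in $\Ki_{P_1}$). For $\alpha\in\Ki_{P,\phi}$ presented as a polynomial of degree $<d_P$ in $\bar Z$ with coefficients in the explicit $W$-basis, evaluating $\Psi(\alpha)$ reduces to a Horner scheme at $Z_{P_1}$ inside $\Ki_{P_1}$: at most $d_P$ multiplications of degree-$d$ polynomials modulo $P_1$, yielding the announced cost $\O(d_P\,\M(d))$.

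The main obstacle will be achieving the $(\omega+1)/2$ exponent for the minimal polynomial. A direct Krylov/linear-algebra treatment already yields $\O(d^\omega)$, and descending to $\O(d^{(\omega+1)/2})$ requires the baby-step/giant-step trick combined with fast modular composition (equivalently, the transposition principle), which is technical but is exactly what the Shoup/Kaltofen--Shoup machinery was designed for. Everything else --- the probabilistic step on $\lambda$, the irreducibility check, and the explicit construction of $\Psi$ from the subresultant chain --- is classical and fits well within the claimed budget.
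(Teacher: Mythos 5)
Your proposal follows essentially the same route as the paper, which simply cites \cite{PoSc13b} (Section~2.2) and defers details to Proposition~\ref{prop:primitive}: a random $\Ki$-linear combination $\bar W+\lambda\bar Z$ serves as primitive element, Shoup-style power projections with baby-step/giant-step and the transposition principle give the minimal polynomial in $\O(d^{(\omega+1)/2})$, the isomorphism data is recovered by Hankel-type linear algebra (equivalently extended gcd), and $\Psi(\alpha)$ is applied by Horner evaluation at cost $\O(d_P\,\M(d))$. One wording to tighten: you cannot literally ``build the Krylov sequence $1,\theta_\lambda,\ldots,\theta_\lambda^{d-1}$'' within the budget (that is already $\Omega(d^2)$); only $O(\sqrt d)$ explicit powers plus linear projections of the rest are computed, which your later invocation of baby-step/giant-step and Kaltofen--Shoup shows you know.
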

\begin{proof}
  See e.g. \cite[Section 2.2]{PoSc13b}; some
  details are in the proof of Proposition \ref{prop:primitive}.
\end{proof}
\begin{rem}
  \label{rem:EnoughElts}
  We do not precisely pay attention to the assumption about the number
  of elements in $\Ki$ in this paper. Note that we will always have
  $d\leq\dy$ in our context. Therefore, if $\Ki$ is a finite field
  without enough elements, it is sufficient to build a degree $2$
  field extension since $p>\dy$.
\end{rem}
\begin{rem}
  \label{rem:omega-rect}
  The above complexity result can actually be expressed as
  $\O(d^{\omega_0})$ where $\frac 3 2 \leq \omega_0 \leq 2$ denotes an
  exponent so that one can multiply a $d \times \sqrt d$ matrix
  and a square $\sqrt d \times \sqrt d$ one with
  $\O(d^{\omega_0})$ operations in $\Ki$. One has $\omega_0<1.667$ from
  \cite{HuPa98}, which is better than the best known bound
  $\frac {\omega+1} 2<1.687$ \cite{Le14}. This however does not
  improve our main results, since we could take $\omega=3$ for our
  results to stand.
\end{rem}
\begin{rem}
  \label{rem:deterministic}
  \cite[Section 4]{HoLe18} provides an almost linear deterministic
  algorithm to compute modulo tower of fields by computing
  ``accelerated towers'' instead of primitive elements. Such a
  strategy would lead to a version of Theorem \ref{thm:half-Puiseux}
  with a deterministic algorithm and a complexity bound
  $\O(\dy^{1+o(1)}\,\delta)$. Their preprint does not however deal with
  dynamic evaluation, so this can not be directly be used in Section
  \ref{sec:D5}, thus in our main results.
\end{rem}

\subsection{The \arnp{} algorithm.}
\label{ssec:algo}
We detail the algorithm mentionned in Theorem
\ref{thm:half-Puiseux}. It computes \emph{truncated parametrisations}
of $F$, i.e. maps $\pi=(\gamma\,X^e,\Gamma(X)+\alpha\,X^\tau\,Y)$
s.t. $\pi(T,0)$ is a \RPE{} of $F$ known with precision $\frac \tau e$
(see Definition \ref{dfn:precision}). Except possibly at the first
call, $H$ therein is Weierstrass.

\begin{algorithm}[ht]
  \nonl\TitleOfAlgo{\arnp($H,P,n,\pi$)\label{algo:ARNP}}%
  \KwIn{%
    $P\in \Ki[Z]$ irreducible, $H\in\Ki_P[X,Y]$ separable and monic in
    $Y$ with $d:=\deg_Y(H)>0$, $n\in\Ni$ (truncation order) and $\pi$ the
    current truncated-parametrisation ($P=Z$ and $\pi=(X,Y)$ for the
    initial call).%
  }%
  \KwOut{all RPEs $R_i$ of $H$ s.t. $n-v_i\geq r_i$, 
    with precision $(n-v_i)/e_i \geq r_i/e_i$.}%
  $\Rc\ \gets\ \{\}$ ; $B\ \gets\ A_{d-1}/d$ ; $\pi_1\ \gets\tronc{\pi(X,Y-B)}{n}$ \label{algoARNP:piAbhyankar}\tcp*{$H=\sum_{i=0}^dA_i Y^i$}%
  \leIf{$d=1$}{\Return$\pi_1(T,0)$}{\label{algoARNP:Abhyankarshift} $H_1\gets\tronc{H(X,Y-B)}{n}$}%
  \ForEach(\tcp*[f]{$\Delta$ belongs to $\edge{m}{q}{l}$}){$\Delta$ \In{} $\Nn(H_1)$}{%
    \ForEach{$(\phi, M)$ \In{} $\factor(\Ki_P,\phi_\Delta)$}{%
      \lIf{$\deg_W(\phi)=1$}{$\xi,P_1,H_2,\pi_2=-\phi(Z,0),P,H_1,\pi_1$}%
      \Else{$(P_1,\Psi) \ \gets$ \label{algoARNP:prim} \PrimElt($P,\phi$)\;%
        $\xi,H_2,\pi_2\ \gets \Psi(W),\Psi(H_1),\Psi(\pi_1)$\label{algoARNP:changeRep}\tcp*{$\Psi:\Ki_{P,\phi}\rightarrow \Ki_{P_1}$ isomorphism}}%
      $\pi_3\ \gets \pi_2(\xi^v\,X^q,X^m\,(Y+\xi^u))\mod P_1$\label{algoARNP:piRNPShift}\tcp*{$u,v=\bezout(m,q)$}%
      $H_3\ \gets \tronc{H_2(\xi^v\,X^q,X^m\,(Y+\xi^u))}{n_1}\mod P_1$\tcp*{$n_1=q\,n-l$}\label{algoARNP:updateN}%
      $H_4\ \gets$ \wpt($H_3,n_1$)\label{algoARNP:WPT}\;%
      $\Rc\ \gets\ \Rc\ \cup$ \arnp($H_4, P_1, n_1, \pi_3$)%
    }
  }
  \Return $\Rc$\;%
\end{algorithm}

\begin{rem}
  \label{rem:size-pi}
  We have $\deg_X(\pi)\leq n\,e_i$ for any \RPE{} deduced from
  $\pi$. This is obvious when $\pi$ is defined from Line
  \ref{algoARNP:piAbhyankar}; changing $X$ by $X^q$ on Line
  \ref{algoARNP:piRNPShift} is also straightforward. Also, we have
  $m\leq n\,e_i$, since $\frac m q \leq \frac l q\leq n$ from
  Definition \ref{dfn:truncN}.
\end{rem}

Theorem \ref{thm:half-Puiseux} is an immediate consequence of the
following result, proved in Section \ref{ssec:arnp}.
\begin{prop}
  \label{prop:arnp-correctness}
  \arnp$(F,Z,6\,\vRF/\dy,(X,Y)))$ outputs a set of \RPE{}s.
  Among them is a set $R_1,\cdots,R_\lambda$ known with precision at
  least $4\,\vRF/\dy\ge r_i/e_i$, with $v_i < 2\,\vRF/\dy$ and
  $\sum_{i=1}^\lambda e_i\,f_i \geq \frac\dy 2$.  Not taking into
  account the cost of univariate factorisations, it takes an expected
  $\O(\M(\dy\,\vRF) \, \log(\dy)) \subset\Ot(\dy\,\vRF)$ arithmetic
  operations over $\Ki$.
\end{prop}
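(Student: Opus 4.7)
The statement bundles three assertions: correctness and precision for the RPEs returned by \arnp{}, a lower bound $\sum_{i\in I} e_if_i\ge\dy/2$ on the good ones, and an arithmetic complexity bound. I plan to address them in turn.

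The counting assertion is the content of Idea~\ref{id:fact}. Since $F$ is monic and separable in $Y$, $\res(F,F_Y)=\prod_S F_Y(S)$ as $S$ ranges over the $\dy$ classical Puiseux series of $F$ above $0$. Taking $X$-valuations and grouping by RPE yields the identity $\vRF=\sum_{i=1}^{\rho}e_if_iv_i$. A Markov-type estimate then gives
$$\sum_{v_i\ge 2\vRF/\dy}e_if_i\ \le\ \frac{\dy}{2\vRF}\sum_{i=1}^{\rho}e_if_iv_i\ =\ \dy/2,$$
so the set $I=\{i:v_i<2\vRF/\dy\}$ satisfies $\sum_{i\in I}e_if_i\ge\dy/2$, furnishing the desired $\lambda$ RPEs. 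For those $i$, I would invoke the classical inequality $r_i\le 2e_iv_i$, derivable from $F_Y(S_i)=\prod_{S'\neq S_i}(S_i-S')$ since any conjugate sharing the first $r_i$ terms of $S_i$ contributes valuation at least $r_i/e_i$. This forces $r_i/e_i<4\vRF/\dy$, so the announced precision $4\vRF/\dy$ is enough to contain the singular part of $R_i$.

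Correctness would be established by induction on the recursive depth of \arnp{}, proving exactly the output specification stated in the pseudo-code: at every call with parameters $(H,P,n,\pi)$, every RPE $R_i$ of $H$ with $n-v_i\ge r_i$ is returned with precision $(n-v_i)/e_i$. The inductive step uses that the Puiseux transform $(X,Y)\mapsto(\xi^vX^q,X^m(Y+\xi^u))$ sends $(n,v_i)$ to $(qn-l,qv_i-l)$ on edge $\Delta$ (preserving the slack $n-v_i$), that Weierstrass preparation discards only RPEs whose centre satisfies $Y(0)\neq 0$ (Proposition~\ref{prop:wpt}), and that the Abhyankar translation from Line~\ref{algoARNP:piAbhyankar} does not destroy RPEs but only enables the useful branching guaranteed by Lemma~\ref{lem:abhyankar}. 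At the initial call $n=6\vRF/\dy$ and $v_i<2\vRF/\dy$ for $i\in I$, so the invariant delivers the claimed precision.

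For the complexity, the recursion depth is $\O(\rho\log\dy)\subseteq\O(\dy\log\dy)$ by Lemma~\ref{lem:abhyankar}: between two ``useful'' nodes (multiple edges, non-integer slope, or non-trivial factorisation of $\phi_\Delta$), the Tschirnhaus shift forces such a node within $\O(\log\dy)$ integer-slope shifts. Per-node costs are dominated by the bivariate shift (Lemma~\ref{lem:bivshift}), the Puiseux substitution (Lemma~\ref{lem:onesubs}), Weierstrass preparation (Proposition~\ref{prop:wptalgo}) and the primitive-element change of representation (Proposition~\ref{prop:primeltKi}), each bounded by $\O(\M(nd\dg))$ up to an overhead absorbed by $\Ot$. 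The heart of the argument is then an amortised invariant: summed over all nodes at a given level of the recursion tree, $\sum nd\dg\le 6\vRF$, thanks to the combined effect of the update $(n,d,\dg)\mapsto(qn-l,Md_\phi,\dg\,d_\phi)$, the Newton-polygon identity $\sum_\Delta q_\Delta\deg\phi_\Delta\le d$, and the degree drop contributed by each Weierstrass step. Multiplying $\O(\M(\vRF))$ per level by $\O(\dy\log\dy)$ levels yields $\O(\M(\vRF)\dy\log\dy)\subseteq\Ot(\dy\vRF)$. The main obstacle will be this amortised invariant, whose rigorous proof requires careful bookkeeping of the interaction between the truncated Newton polygon $\Nn$ and the Weierstrass reduction, and which I expect to isolate in a lemma close in spirit to the forthcoming Lemma~\ref{lem:Ni-vFi}.
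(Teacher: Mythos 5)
Your overall structure mirrors the paper's: the proposition is there proved by stitching together Lemma~\ref{lem:tronc} (truncation gives precision $n-v_i$), Lemma~\ref{lem:ri-vi} ($r_i/e_i\le v_i$), Proposition~\ref{prop:vi-V} (the counting), and Proposition~\ref{prop:arnp-complexity} (the cost). Your Markov-inequality derivation of the counting step is a genuinely different and arguably cleaner route than Proposition~\ref{prop:vi-V}, which instead sorts the $R_i$ by $v_i$ and cuts at the half-mass threshold; both use the same resultant identity $\vRF=\sum e_if_iv_i$ and both yield the same set $\{i:v_i<2\vRF/\dy\}$. Two small inaccuracies in your correctness part are worth flagging. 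First, the sharp inequality is $r_i/e_i\le v_i$ (Lemma~\ref{lem:ri-vi}), not $r_i\le 2e_iv_i$; your own sketch of the argument (one term of $\sum_{S'\neq S_i}\val(S_i-S')$ already contributes $\ge r_i/e_i$, all others are $\ge 0$ by monicity) actually gives the tighter bound, so the factor~$2$ seems to be a slip, though it does not break the conclusion. Second, the quantity that evolves cleanly under the Puiseux transform is not the ``slack'' $n-v_i$ but rather $n-N_i$ where $N_i=r_i/e_i+v_i$ (Lemma~\ref{lem:Ni-vFi}); this gets \emph{multiplied} by $q$ at each step, it is not preserved, and only at the leaf does the output precision collapse back to $n_0-v_i$ (Lemma~\ref{lem:tronc}). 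Your invariant as written would give the wrong intermediate claim even if the final conclusion is right.

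The real gap is, as you anticipate, the complexity amortisation. Your per-level invariant $\sum n\,d\,\dg\le 6\vRF$ does hold (it follows from $n_1\le qn$, the child degree $M$ after \wpt{}, $\dg'=\dg\deg\phi$ and $\sum_\Delta q_\Delta\deg\phi_{\Delta}\le d$; note $d'=M$, not $Md_\phi$ as you wrote, since \wpt{} extracts the factor of order $M=\val_Y$). But the per-node \emph{cost} is not $\O(\M(n'd'\dg'))$ in the child's parameters: the Puiseux substitution and \wpt{} act on the full $\deg_Y$-degree polynomial $H_3$ \emph{before} the degree drop, so the cost is $\O(\M(q\,\deg\phi\cdot n\,d\,\dg))$ in the parent's parameters, an overhead of $q\deg\phi$ which the level-by-level sum does not absorb. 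The paper's Proposition~\ref{prop:arnp-complexity} sidesteps this by a per-RPE amortisation: Lemma~\ref{lem:abhyankar} forces any node with $q=\deg\phi=1$ to separate branches (so there are at most $\rho$ of them, each cheap), while each node with $q\deg\phi>1$ lying on the path to $R_i$ has $q\le e_i$ and $\dg\deg\phi\le f_i$, and there are at most $\log(e_if_i)\le\log\dy$ of them per path. Superadditivity of $\M$ (Definition~\ref{dfn:M}, Lemma~\ref{lem:M}) then yields $\O(\M(n\dy^2)\log\dy)$, which at $n=6\vRF/\dy$ is the announced $\O(\M(\dy\vRF)\log\dy)$. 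You should either adopt this per-RPE bookkeeping or correct your per-level argument to account explicitly for the $q\deg\phi$ overhead.
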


\begin{rem}\label{rem:idea-trunc} The key idea is to use tighter
  truncation bounds than in \cite{PoRy11,PoRy15}.  Proposition
  \ref{prop:arnp-correctness} says that $n\in \O(\vRF/\dy)$ is enough
  to get some informations (at least half of the singular parts of
  Puiseux series). This requires a slight modification of
  \cite[Algorithm \texttt{ARNP}]{PoRy15}: $n$ is updated in a
  different way. When there is a transform $X\gets{} X^q$, it must be
  multiplied by $q$; also, it cannot be divided by the degree $t$ of
  the found extension anymore. These points are actually compensated
  by algorithm \wpt{}, which divides the degree in $Y$ by the same
  amount (it eliminates all the conjugates). The size of the input
  polynomial $H$ is thus bounded by $\O(\vRF)$ elements of $\Ki$ (cf
  Section \ref{ssec:arnp}).
\end{rem}

\subsection{Using tight truncations bounds.}
\label{ssec:trunc}
By a carefull study of the \RNP{} algorithm, we get an optimal
truncation bound to compute a \RPE{} of a monic polynomial $F$ with
this algorithm or \arnp{}. From this study, we also deduce an exact
relation between $\vRF$ and this optimal bound. In this section, for
$1\leq i\leq\rho$, we let $\edge{m_{i,h}}{q_{i,h}}{l_{i,h}}$,
$1\leq h\leq g_i$ be the successive edges encountered during the
computation of the expansion $R_i$ with \RNP{}, and denote
\[
N_i:=\sum_{h=1}^{g_i} \frac{l_{i,h}} {q_{i,1}\cdots
  q_{i,h}}.
\]
\begin{lem}
  \label{lem:Ni-vFi}
  For any $1\leq i\leq\rho$, we have $N_i=\frac{r_i}{e_i}+v_i$.
\end{lem}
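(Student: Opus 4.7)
The plan is to induct on $g_i$, the number of edges encountered during the \RNP{} computation of $R_i$. Denote by $F^{(1)}$ the polynomial obtained from $F$ after performing the first Puiseux transform associated with the edge $\edge{m_{i,1}}{q_{i,1}}{l_{i,1}}$ and with the chosen root $\xi_{i,1}$ of $\phi_{\Delta_{i,1}}$ (followed if needed by Weierstrass preparation), and let $R_i^{(1)}$ be the branch of $F^{(1)}$ corresponding to $R_i$, with associated quantities $e_i^{(1)}$, $r_i^{(1)}$, $v_i^{(1)}$ and $N_i^{(1)}=\sum_{h=2}^{g_i} l_{i,h}/(q_{i,2}\cdots q_{i,h})$.

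Three transition relations will do all the bookkeeping. First, $e_i=q_{i,1}\,e_i^{(1)}$ and $N_i = l_{i,1}/q_{i,1} + N_i^{(1)}/q_{i,1}$ hold directly from the definitions. Second, differentiating the identity $F^{(1)}(X_1,Y_1) = X_1^{-l_{i,1}} F(\xi_{i,1}^{v_{i,1}} X_1^{q_{i,1}}, X_1^{m_{i,1}}(Y_1+\xi_{i,1}^{u_{i,1}}))$ with respect to $Y_1$ yields $F^{(1)}_{Y_1}= X_1^{m_{i,1}-l_{i,1}} F_Y(\ldots)$; evaluating at a Puiseux series of $R_i^{(1)}$ and converting $X_1$-valuations to $X$-valuations (with factor $q_{i,1}$) gives $v_i^{(1)} = q_{i,1} v_i + m_{i,1} - l_{i,1}$. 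Third, any Puiseux series $S'\neq S_i$ of $F$ either has a first term distinct from that of $S_i$, in which case $\val(S_i-S')\leq m_{i,1}/q_{i,1}$, or shares the first transform with $S_i$ and corresponds to a Puiseux series $S'^{(1)}\neq S_i^{(1)}$ of $F^{(1)}$; in the latter case $S_i(X)-S'(X) = c\,X^{m_{i,1}/q_{i,1}}(S_i^{(1)}(X_1)-S'^{(1)}(X_1))$ for some nonzero constant $c$ and $X_1=(X/\xi_{i,1}^{v_{i,1}})^{1/q_{i,1}}$, giving $\val(S_i-S') = m_{i,1}/q_{i,1} + \val[X_1](S_i^{(1)}-S'^{(1)})/q_{i,1}$. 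Taking the maximum over $S'\neq S_i$ and using $q_{i,1}e_i^{(1)}=e_i$ yields $r_i/e_i = m_{i,1}/q_{i,1} + r_i^{(1)}/e_i$.

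For the base case $g_i=1$ the recursion stops because $\xi_{i,1}$ is a simple root of $\phi_{\Delta_{i,1}}$; this translates via the explicit formula for $F^{(1)}(0,Y_1)$ into $Y_1=0$ being a simple root of $F^{(1)}(0,Y_1)$. The implicit function theorem then identifies $S_i^{(1)}$ as the unique Puiseux series of $F^{(1)}$ vanishing at the origin, so $v_i^{(1)}=0$ and $r_i^{(1)}=0$. Combined with the three transition relations this yields $e_i=q_{i,1}$, $v_i=(l_{i,1}-m_{i,1})/q_{i,1}$, $r_i/e_i=m_{i,1}/q_{i,1}$, and therefore $r_i/e_i+v_i = l_{i,1}/q_{i,1} = N_i$. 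The inductive step assumes $N_i^{(1)}=r_i^{(1)}/e_i^{(1)}+v_i^{(1)}$ and substitutes:
\begin{align*}
N_i &= \frac{l_{i,1}}{q_{i,1}} + \frac{1}{q_{i,1}}\left(\frac{r_i^{(1)}}{e_i^{(1)}}+v_i^{(1)}\right)
= \frac{l_{i,1}}{q_{i,1}} + \frac{r_i^{(1)}}{e_i} + v_i + \frac{m_{i,1}-l_{i,1}}{q_{i,1}} \\
&= \frac{m_{i,1}}{q_{i,1}} + \frac{r_i^{(1)}}{e_i} + v_i = \frac{r_i}{e_i}+v_i.
\end{align*}

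The main obstacle will be the third transition relation on regularity indices: one has to argue carefully that the maximum of $\val(S_i-S')$ over $S'\neq S_i$ is attained among Puiseux series sharing the first Puiseux transform with $S_i$ (strictly if $r_i^{(1)}>0$, and degenerately equal to $m_{i,1}/q_{i,1}$ when $r_i^{(1)}=0$). This is where the use of the modified Newton polygon $\Ns$ in \RNP{}, rather than the plain $\Nc$, becomes important, as anticipated by Remark \ref{rem:RNP-ARNP}: the fictitious edge guarantees that branches whose singular part separates already at the first edge contribute the term $m_{i,1}/q_{i,1}$ faithfully and that the regularity index is not under-estimated.
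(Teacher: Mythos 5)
Your inductive proof is a genuinely different route from the paper's. The paper treats the composite of \emph{all} Puiseux transforms at once: writing $G_i(X,Y)=F(\gamma_i X^{e_i},\Gamma_{i,0}(X)+X^{r_i}Y)/X^{N_ie_i}$ and using that $\val(\partial_Y G_i(X,0))=0$, it gets the identity in one differentiation; the recursive structure of the algorithm is hidden inside the substitution. You instead keep the recursion visible, tracking $N_i$, $v_i$ and $r_i$ edge by edge with three transition relations and closing by induction on $g_i$. Your relation~(2) is exactly the paper's differentiation step applied one transform at a time, so the core computation is shared; what you buy is a clean separation between the $v_i$-bookkeeping (a purely algebraic identity) and the $r_i$-bookkeeping (where the modified Newton polygon matters). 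Your relations~(1) and~(2) are correct, and the base case and inductive algebra check out.

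The real issue is precisely the one you flag, but your proposed resolution isn't quite right. You argue relation~(3) via $r_i/e_i=\max_{S'\neq S_i}\val(S_i-S')$ and assert this maximum is ``degenerately equal to $m_{i,1}/q_{i,1}$'' when $r_i^{(1)}=0$. Neither statement holds in general: the regularity index is an integer ceiling, $r_i=\lceil e_i\max_{S'\neq S_i}\val(S_i-S')\rceil$, and when the last edge is fictitious the maximum can be \emph{strictly} below $m_{i,1}/q_{i,1}$. For instance with $F=(Y-X^2)(Y^2-X)$ and $S_i=X^2$ one has $\max\val(S_i-S')=1/2$ while the fictitious edge gives $m_{i,1}/q_{i,1}=1$ and $r_i=1$; the identity $r_i=m_{i,1}e_i^{(1)}+r_i^{(1)}$ survives only because the fictitious slope is defined as $\lfloor\text{next slope}\rfloor$, which is exactly what makes the ceiling come out right. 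So relation~(3) is true but for a more delicate reason than the one you sketch. To be fair, the paper's proof also leans on the same unproven-here fact, namely that \RNP{} with the modified polygon returns $\Gamma_i=\Gamma_{i,0}(X)+X^{r_i}Y$ with exponent \emph{exactly} $r_i$ (delegated to \cite{PoRy15}); you would need to invoke the same reference, or prove $r_i=m_{i,1}e_i^{(1)}+r_i^{(1)}$ directly from the definition of $\Ns$ and the ceiling, rather than from the valuation maximum. With that repair, your induction is a complete and somewhat more explicit alternative to the paper's one-shot argument.
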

\begin{proof}
  Denote $R_i=(\gamma_i\,X^{e_i}, \Gamma_i(X,Y))$ with
  $\Gamma_i(X,Y)=\Gamma_{i,0}(X)+X^{r_i}\,Y$. By the definition of the
  Puiseux transformations, we have $(0,1)\in\Nc(G_i)$ for
  \[
  G_i(X,Y):=\frac{F(\gamma_i\,X^{e_i},\Gamma_i(X,Y))}{X^{N_i\,e_i}},
  \]
  i.e.  $\val\left(\partial_Y G_i(X,0)\right)=0$. This is
  $\val(X^{r_i}\,F_Y(\gamma_i\,X^{e_i},\Gamma_{i,0}(X)))=N_i\,e_i$,
  or:
  \[
  N_i=\frac{r_i+\val(F_Y(\gamma_i\,X^{e_i},\Gamma_{i,0}(X)))}{e_i}
  =\frac{r_i}{e_i}+\val(F_Y(X,\Gamma_{i,0}((X/\gamma_i)^{1/e_i})))
  =\frac{r_i}{e_i}+v_i.\qedhere
  \]
\end{proof}
\begin{rem}
  \label{rem:RNP-ARNP}
  This result shows that $N_i$ does not
  depend on the algorithm. Nevertheless, the proof above relies on
  algorithm \RNP{} because it computes \emph{precisely} the singular
  part of all Puiseux series thanks to the \emph{modified} Newton polygon
  \cite[Definition 6]{PoRy15}. The algorithm \arnp{} introduces two
  differences:
  \begin{itemize}
  \item The Abhyankar's trick does not change the value of the $N_i$.
    After applying it, the next value $\frac l q$ is just the addition
    of the $\frac{l_i}{q_i}$ we would have found with \RNP{} (the
    concerned slopes being the sequence of integer slopes that compute
    common terms for \emph{all} Puiseux series, plus the next
    one). See Example \ref{xmp:RNP-ARNP} below.
  \item Not using the modified Newton polygon $\Ns$ can only change
    the last value $\frac l q$ (when the coefficient of
    $X^{r/e}$ is $0$). This has no impact on the proof of Lemma
    \ref{lem:tronc} below.
  \end{itemize}
  In the remaining of this paper, we will define $N_i$ as
  $\frac{r_i}{e_i}+v_i$.
\end{rem}
\begin{xmp}
  \label{xmp:RNP-ARNP}
  Let's assume that $F$ is an irreducible polynomial with Puiseux
  series $S(X)=X^{1/2}+X+X^{3/2}+X^2+X^{9/4}$. The successive values
  for $(l,q)$ are:
  \begin{itemize}
  \item $(4,2)$, $(2,1)$, $(2,1)$, $(2,1)$ and $(2,2)$ with the \RNP{}
    algorithm. We thus get $N = 2 + 1 + 1 + 1 + \frac 1 2 =
    \frac{11}2$.
  \item $(4,2)$, $(14,2)$ with the \arnp{} algorithm (assuming high
    enough truncation). We thus get $N = 2 + \frac 7 2 =
    \frac{11}2$.
  \end{itemize}
\end{xmp}
\input{figures/RNPshift.tkz}
\begin{lem}
  \label{lem:tronc}
  Let $n_0\in\Ni$. To compute the \RPE{} $R_i$ with certified
  precision $n_0\geq \frac{r_i}{e_i}$, it is necessary and sufficient
  to run $\arnp{}$ with truncation bound $n=n_0+v_i$. In particular,
  to ensure the computation of the singular part of $R_i$, it is
  necessary and sufficient to use a truncation bound $n\geq N_i$.
\end{lem}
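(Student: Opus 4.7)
The strategy is to follow the branch of $R_i$ inside \arnp{} and track two interlocking quantities along the recursion: the truncation bound $n^{(h)}$ in force at level $h$, and the ``$Y$-prefactor'' $T^{c_h}$ accumulated in the running parametrization $\pi^{(h)}$. At the end of the branch ($h=g_i$), their interplay yields the precision of the final \RPE{}.

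First, I would unroll the update $n_1=q\,n-l$ of line \ref{algoARNP:updateN}: writing $E_h:=q_{i,1}\cdots q_{i,h}$ (so $E_{g_i}=e_i$), the identity $n^{(h)}/E_h = n^{(h-1)}/E_{h-1} - l_{i,h}/E_h$ telescopes to
\[
\frac{n^{(g_i)}}{e_i} \;=\; n \,-\, \sum_{k=1}^{g_i}\frac{l_{i,k}}{E_k} \;=\; n-N_i.
\]
Second, I would analyse how the parametrization evolves. Because each Puiseux transform $Y\mapsto X^m(Y+\xi^u)$ is linear in $Y$, an induction yields $\pi^{(h)}(T,Y)=(\gamma^{(h)}T^{E_h},\Gamma^{(h)}(T)+\eta^{(h)}T^{c_h}Y)$ with $\eta^{(h)}$ a unit and $c_h=q_{i,h}c_{h-1}+m_{i,h}$; the Abhyankar shift on line \ref{algoARNP:piAbhyankar} is absorbed into $\Gamma^{(h)}$ without touching $c_h$ or $\eta^{(h)}$. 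The main subtlety, which I expect to be the hardest step, is the identification $c_{g_i}=r_i$: geometrically, the parameter $T$ at level $g_i$ is the canonical parameter of $R_i$ up to a unit in $\algclos{\Ki}$, and $\Gamma^{(g_i)}$ is exactly the singular part $\Gamma_{i,0}$, so the first undetermined coefficient of $R_i$ sits in $T$-degree $r_i$. This mirrors the $\partial_Y$-argument of Lemma \ref{lem:Ni-vFi}, now read on the composed parametrization rather than on $F$ directly; alternatively one can argue by direct induction on $g_i$ starting from the definition of the regularity index.

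With both ingredients in hand, the conclusion is mechanical. The recursive call entered at level $g_i$ has $d=1$ and returns $\pi^{(g_i)}(T,-A_0(T))=\Gamma_{i,0}(T)-\eta^{(g_i)}T^{r_i}A_0(T)$, where $A_0$ is the constant-in-$Y$ coefficient of the Weierstrass polynomial at this level, known modulo $T^{n^{(g_i)}}$. The $Y$-component of the output \RPE{} therefore agrees with that of the true $R_i$ up to order $T^{r_i+n^{(g_i)}}=T^{e_i(n-v_i)}$, i.e.\ with precision exactly $n-v_i$ in the original variable $X$. Necessity and sufficiency of $n=n_0+v_i$ both follow. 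The second assertion comes from plugging $n_0=r_i/e_i$ and using $N_i=r_i/e_i+v_i$; a final sanity check is that the visibility condition $l_{i,h}/q_{i,h}\leq n^{(h-1)}$ of Definition \ref{dfn:truncN}, which telescopes to $\sum_{k\leq h}l_{i,k}/E_k\leq n$, is automatic whenever $n\geq N_i$, so $\Nn$ does display every edge of the branch.
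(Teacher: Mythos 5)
Your plan follows essentially the same telescoping argument as the paper's proof — the identity $n^{(g_i)}/e_i = n - N_i$ is exactly what the paper obtains (expressed inductively as ``start from $n=n'+N_i$ and observe that $q\,N_i - l$ is the $N_i$ of the transformed polynomial''), and the reading of the final parametrization $\pi=(\gamma_i X^{e_i},\Gamma_i(X)+\alpha_i X^{r_i}Y)$ together with the degree-1 Weierstrass coefficient is the same mechanism. Your version is arguably cleaner in that it names $c_h$ and derives its recursion $c_h=q_{i,h}c_{h-1}+m_{i,h}$ explicitly rather than reading the final form of $\pi$ off the last recursive call.

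However, there is a genuine gap: the identification $c_{g_i}=r_i$ that you flag as ``the hardest step'' is \emph{false} in general, and the failure mode is precisely the case that the paper treats separately at the end of its proof. If the coefficient of $X^{r_i/e_i}$ in the Puiseux series is zero, then the terminal parametrization is $\pi=(\gamma_i X^{e_i},\Gamma_i(X)+\alpha_i X^{\eta_i}Y)$ with $\eta_i>r_i$, because \arnp{} (unlike \RNP{}) does not use the modified Newton polygon $\Ns$ and will ``overshoot'' the regularity index by computing some extra zero coefficients (this is exactly the point of Remark~\ref{rem:RNP-ARNP}, second bullet). In that situation your formula $\sum_k l_{i,k}/E_k$ no longer equals the $N_i$ of Lemma~\ref{lem:Ni-vFi}: the last $l/q$ used by \arnp{} is larger, so $n^{(g_i)}/e_i$ is strictly less than $n-N_i$, and one must check that this loss is exactly compensated by the gain $\eta_i - r_i$ in $c_{g_i}$, so that the final precision $(c_{g_i}+n^{(g_i)})/e_i$ is still $n-v_i$. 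Your argument, as written, would silently produce the wrong bound in this case. So the proof needs a second case, mirroring the paper's: either the coefficient at $X^{r_i/e_i}$ is non-zero (then your computation goes through as stated), or it is zero (then show the overshoot in $c_{g_i}$ and the extra drop in $n^{(g_i)}$ cancel). Your ``sanity check'' on the visibility condition $\sum_{k\le h} l_{i,k}/E_k\le n$ is a nice remark, but it also has to be re-examined with \arnp{}'s $l_{i,g_i}$ rather than \RNP{}'s.
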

\begin{proof}
  First note that starting from $H$ known up to $X^n$, the greatest
  $n_1$ so that we can certify
  $H_{\Delta,\xi}:=H(\xi^v\,X^q,X^m\,(Y+\xi^{u}))/X^{l}$ up to
  $X^{n_1}$ is precisely $n_1=q\,n-l$ (see Figure \ref{fig:rnpshift};
  details are in \cite[Proof of Lemma 2]{PoRy11}). This explains the
  truncation update of line \ref{algoARNP:updateN}.

  We now distinguish two cases, according to whether the coefficient
  in $X^{\frac{r_i}{e_i}}$ of any Puiseux series asociated to $R_i$ is
  zero or not. If not, then starting from a truncation bound
  $n=n'+N_i$, we get $n_1=q\,n'+q\,N_i-l$.  By construction,
  $q\,N_i-l$ is precisely the ``$N_i$'' of the associated \RPE{} of
  $H_{\Delta,\xi}$. By induction, we finish at the last call of the
  algorithm associated to the \RPE{} $R_i$ with a truncation bound
  $n=e_i\,n'$. Moreover, we have $\deg_Y(H)=1$ and
  $\pi=(\gamma_i\,X^{e_i},\Gamma_i(X)+\alpha_i\,X^{r_i}\,Y)$. Hence,
  the ouptut $R_i$ is known with precision $n'+\frac{r_i}{e_i}$. We
  conclude thanks to Lemma \ref{lem:Ni-vFi} by taking
  $n'=n_0-\frac{r_i}{e_i}$.

  Finally, if the coefficient in $X^{\frac{r_i}{e_i}}$ of any Puiseux
  series asociated to $R_i$ is zero, we will have
  $\pi=(\gamma_i\,X^{e_i},\Gamma_i(X)+\alpha_i\,X^{\eta_i}\,Y)$ with
  $\eta_i>r_i$. If this is the case, then that means that the at the
  previous step, we already computed some zero coefficients, thus
  losing the same precision $\eta_i-r_i$. This does not change the
  result.
\end{proof}
This proves that 
$N_i$ is an optimal bound to compute the singular
part of the \RPE{} $R_i$. We now bound it.
\begin{lem}
  \label{lem:ri-vi}
  We have $\frac{r_i}{e_i}\leq v_i$.
\end{lem}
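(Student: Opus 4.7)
The proof will be a short algebraic verification combining the factorisation of $F$ with the definition of the regularity index. First I would exploit the fact that $F$ is monic and separable in $Y$: it factors as $F = \prod_j (Y - S_j)$ over $\algclos{\Ki}[[X^{1/L}]][Y]$, where $L$ is the l.c.m.\ of all ramification indices and the $S_j$ are the $\dy$ distinct Puiseux series of $F$ above $0$. Differentiating in $Y$ and evaluating at $S_i$, all terms containing $(Y - S_i)$ vanish, leaving
\[
F_Y(S_i) \;=\; \prod_{j \neq i}(S_i - S_j), \qquad \text{hence} \qquad v_i \;=\; \sum_{j \neq i}\val(S_i - S_j).
\]

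Next I would observe that each term in this sum is non-negative: since $F$ is monic, every $S_j$ lies in $\algclos{\Ki}[[X^{1/L}]]$, so $S_i - S_j$ has non-negative valuation. Setting $M := \max_{j \neq i}\val(S_i - S_j)$, this immediately yields $v_i \geq M$. Now, unwinding Definition \ref{dfn:singRPE}, $r_i$ is the smallest non-negative integer satisfying $\val(S_i - S') \leq r_i/e_i$ for every Puiseux series $S' \neq S_i$; equivalently, $r_i/e_i$ is the smallest element of $\frac{1}{e_i}\Ni$ bounding $M$ from above.

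The remaining ingredient is the integrality statement $v_i \in \frac{1}{e_i}\Ni$. This is clear: $F_Y \in \Ki[X,Y]$ has integer $X$-exponents, and substituting $S_i \in \algclos{\Ki}[[X^{1/e_i}]]$ produces a series in $\algclos{\Ki}[[X^{1/e_i}]]$ whose $X$-valuation thus lies in $\frac{1}{e_i}\Ni$ (the separability of $F$ ensures $F_Y(S_i) \neq 0$, so the valuation is finite). Combining $v_i \in \frac{1}{e_i}\Ni$ with $v_i \geq M$ and the minimality characterisation of $r_i/e_i$ yields $v_i \geq r_i/e_i$.

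There is no substantive obstacle here: the whole argument is careful bookkeeping of valuations, and the only point worth emphasising is that we really do need the integrality $v_i \in \frac{1}{e_i}\Ni$ rather than merely the trivial lower bound $v_i \geq M$, because $M$ itself can have denominator larger than $e_i$ (coming from contributions of branches $S_j$ with $e_j > e_i$), in which case $r_i/e_i$ exceeds $M$ strictly.
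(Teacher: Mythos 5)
Your proof is correct, and it fills in a genuine argument where the paper only gives a pointer: the paper's ``proof'' is just a citation to \cite[Proposition 5, page 204]{PoRy11}, so yours is a self-contained reconstruction of the underlying fact. The chain of ideas is exactly right: in the monic case all roots live in $\algclos{\Ki}[[X^{1/L}]]$, so $v_i=\sum_{j\neq i}\val(S_i-S_j)$ is a sum of non-negative rationals and dominates its largest term $M=\max_{j\neq i}\val(S_i-S_j)$; Definition~\ref{dfn:singRPE} then gives $r_i/e_i=\lceil e_iM\rceil/e_i$, the least element of $\tfrac{1}{e_i}\Ni$ that is $\geq M$; and the observation that $F_Y(S_i)\in\algclos{\Ki}[[X^{1/e_i}]]\setminus\{0\}$ (monicity for membership, separability for non-vanishing) forces $v_i\in\tfrac{1}{e_i}\Ni$, which upgrades $v_i\geq M$ to $v_i\geq r_i/e_i$. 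Your closing remark is also well taken and not merely pedantic: the inequality $v_i\geq M$ alone does not suffice, because $M$ can have denominator $\mathrm{lcm}(e_i,e_j)>e_i$ when a nearby branch $S_j$ has larger ramification, in which case $r_i/e_i>M$ and the integrality of $e_iv_i$ is the step that closes the gap. This is precisely the kind of detail a citation hides, and your write-up makes it explicit.
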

\begin{proof}
  This is written in the proof of \cite[Proposition
  5, page 204]{PoRy11}.
\end{proof}
\begin{cor}
  \label{cor:Ni-vi}
  We have $v_i\leq N_i\leq 2\,v_i$.
\end{cor}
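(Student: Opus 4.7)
The plan is to deduce both inequalities as immediate consequences of the two preceding lemmas, so no real work is needed beyond combining them carefully and verifying a sign.

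By Lemma \ref{lem:Ni-vFi}, we have the exact identity $N_i = \frac{r_i}{e_i} + v_i$. First, for the upper bound, I simply substitute the inequality $\frac{r_i}{e_i}\leq v_i$ from Lemma \ref{lem:ri-vi} into this identity to obtain $N_i \leq v_i + v_i = 2v_i$.

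For the lower bound $v_i \leq N_i$, the identity $N_i = \frac{r_i}{e_i} + v_i$ reduces the claim to showing $r_i \geq 0$. Since $F$ is assumed monic throughout Section \ref{sec:algo}, every Puiseux series $S$ associated to $R_i$ satisfies $\val(S)\geq 0$, so $\min(0, e_i \val(S)) = 0$, and Definition \ref{dfn:singRPE} forces $r_i \geq 0$. This yields $N_i \geq v_i$ and completes the proof.

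There is essentially no obstacle here: the corollary is really just a repackaging of Lemmas \ref{lem:Ni-vFi} and \ref{lem:ri-vi}. The only subtle point is remembering that the monicity hypothesis in force throughout this section is what guarantees $r_i \geq 0$; without it one would have to reason more carefully about the $\min(0, e_i\val(S))$ term in the definition of the regularity index.
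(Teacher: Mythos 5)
Your proof is correct and matches the paper's approach, which simply cites Lemmas~\ref{lem:Ni-vFi} and \ref{lem:ri-vi} as sufficient. You have usefully spelled out the one implicit detail (that $r_i\geq 0$, which follows from monicity via Definition~\ref{dfn:singRPE}), but this is the same underlying argument the paper intends.
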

\begin{proof}
  Straightforward consequence of Lemmas \ref{lem:Ni-vFi} and
  \ref{lem:ri-vi}.
\end{proof}
We finally deduce global bounds:
\begin{prop} \label{prop:vi-V}
  At least $\frac \dy 2$ Puiseux series
  $S_{i,j,k}$ satisfy $v_i<2\,\vRF/\dy$ and $N_i<4\,\vRF/\dy$.
\end{prop}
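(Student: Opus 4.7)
The plan is to exploit the classical product formula for the resultant as a sum over Puiseux series, then apply a simple counting (Markov-type) argument.

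First, since $F$ is monic in $Y$ with roots $S_{ijk}$ in the algebraic closure of the Puiseux field, one has
\[
R_F \;=\; \res(F,F_Y) \;=\; \prod_{i,j,k} F_Y(S_{ijk}),
\]
the product running over the $\dy$ (classical) Puiseux series of $F$ above $0$. Taking $X$-valuations and grouping the conjugate Puiseux series according to the RPEs $R_1,\dots,R_\rho$, and recalling from the notations in Section \ref{ssec:puiseux} that every $S_{ijk}$ attached to $R_i$ satisfies $\val(F_Y(S_{ijk})) = v_i$, I obtain
\[
\vRF \;=\; \sum_{i=1}^\rho e_i\,f_i\,v_i,
\]
which together with $\sum_{i=1}^\rho e_i f_i = \dy$ (Proposition \ref{prop:eifi}) gives a weighted average of $\vRF/\dy$ for the $v_i$.

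Next I would run the following counting argument. Let
\[
B \;=\; \sum_{i:\; v_i \geq 2\vRF/\dy} e_i\,f_i
\]
count the number of Puiseux series $S_{ijk}$ whose associated RPE has $v_i \geq 2\vRF/\dy$. From the displayed identity,
\[
\vRF \;=\; \sum_{i=1}^\rho e_i f_i v_i \;\geq\; \sum_{i:\; v_i \geq 2\vRF/\dy} e_i f_i v_i \;\geq\; B\cdot\frac{2\vRF}{\dy},
\]
so $B\leq \dy/2$. Therefore at least $\dy - B \geq \dy/2$ Puiseux series satisfy $v_i < 2\vRF/\dy$. For each such series, Corollary \ref{cor:Ni-vi} yields $N_i \leq 2v_i < 4\vRF/\dy$, which is exactly the claim.

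There is no real obstacle here; the only point requiring a word of care is the resultant-product formula, which holds precisely because $F$ is assumed monic in $Y$ (this is the reason Idea \ref{id:Weierstrass} was adopted and why the statement is formulated with $\vRF$ rather than $\val(\disc(F))$). Everything else is bookkeeping on the weighted average.
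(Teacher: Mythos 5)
Your proof is correct and is essentially the paper's own argument: the same resultant identity $\vRF=\sum_i e_i f_i v_i$ together with Corollary~\ref{cor:Ni-vi}; the only cosmetic difference is that you phrase the counting as a Markov bound on $B=\sum_{v_i\ge 2\vRF/\dy} e_i f_i$, while the paper orders the $R_i$ by increasing $v_i$ and isolates the median index $\lambda$.
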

\begin{proof}
  Assume the $R_i$ ordered s.t. $v_i\leq v_{i+1}$,
  and let $\lambda$ s.t.
  $\sum_{i=1}^{\lambda-1} e_i\,f_i < \frac \dy 2 \leq
  \sum_{i=1}^\lambda e_i\,f_i$  (i.e.
  $\sum_{i=\lambda+1}^\rho e_i\,f_i \leq \frac \dy 2 <
  \sum_{i=\lambda}^\rho e_i\,f_i$
  by Proposition \ref{prop:eifi}). Then we have
  \[
  \vRF=\sum_{i=1}^\rho v_i\,e_i\,f_i\geq \sum_{i=\lambda}^\rho
  v_i\,e_i\,f_i \geq v_\lambda \sum_{i=\lambda}^\rho e_i\,f_i >
  v_\lambda \frac{\dy}2,
  \]
  the first equality being a resultant property (see
  e.g. \cite[Exercise 6.12]{GaGe13}). Hence, for all $i\le \lambda$,
  we have $v_i\le v_{\lambda} < 2\vRF/\dy$, thus $N_i <4\vRF/\dy$ by
  Corollary \ref{cor:Ni-vi}. The claim follows.
\end{proof}


\subsection{Complexity results and proof of Theorem \ref{thm:half-Puiseux}.}
\label{ssec:arnp}
\begin{prop}
  \label{prop:arnp-complexity}
  Not taking into account the cost of univariate factorisations,
  running \arnp$(F,Z,n,(X,Y))$ takes an expected
  $\O(\M(n\,\dy^2)\log(\dy))$ operations over $\Ki$.
\end{prop}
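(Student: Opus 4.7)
The plan is to bound the cost of a single recursive call of \arnp{} in terms of the local parameters $(n_v,d_v,d_{P,v})$ (where $d_v=\deg_Y(H_v)$ and $d_{P,v}=\deg_Z(P_v)$), and then sum this cost over all nodes of the recursion tree, grouped by depth. The depth will be controlled via Abhyankar's trick (Lemma \ref{lem:abhyankar}), which forces the tree to have height $\O(\log \dy)$ on any branch. The key combinatorial invariant will be that $\sum_v n_v d_v d_{P,v}$ is non-increasing from a level to the next, hence bounded by the size $n\dy$ of the root input.

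At a single node $v$ of the tree I would account for: the Abhyankar shift on Lines~\ref{algoARNP:piAbhyankar}--\ref{algoARNP:Abhyankarshift}, costing $\O(\M(n_v d_v d_{P,v}))$ by Lemma~\ref{lem:bivshift}; then, for each pair $(\Delta,\phi)$ with $\Delta\in\Nn(H_1)$ on the line $\edge{m}{q}{l}$, a primitive element computation (Proposition~\ref{prop:primeltKi}), a change of representation on Line~\ref{algoARNP:changeRep}, a Puiseux transform on Line~\ref{algoARNP:piRNPShift} costing $\O(n_c\M(d_v d_{P,c}))$ by Lemma~\ref{lem:onesubs} (with $n_c=qn_v-l$ and $d_{P,c}=d_{P,v}\deg(\phi)$), and finally a Weierstrass preparation (Proposition~\ref{prop:wptalgo}). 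Using $q\deg(\phi_\Delta)\le \text{(Y-length of }\Delta)$, $\sum_{\phi\mid\phi_\Delta}\deg(\phi)\le \deg(\phi_\Delta)$ and $\sum_\Delta (\text{Y-length of }\Delta)\le d_v$, summing across all children of $v$ yields
\[
\sum_c n_c \M(d_v d_{P,c}) \le \M\bigl(n_v d_v^2 d_{P,v}\bigr),
\]
where I exploit the superadditivity of $\M$ in Lemma~\ref{lem:M} together with $k\M(x)\le\M(kx)$ (equivalent to $\M(x)/x$ being non-decreasing). The Weierstrass step and change-of-representation cost admit an analogous bound after using $d_vd_{P,v}\le\dy$, and the primitive element cost stays in $\O((d_vd_{P,v})^{\omega_0})$ which is comfortably absorbed.

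Next, I would establish the conservation claim: for every child $c$ of $v$ produced by an edge $(m,q,l)$ and factor $\phi$ of multiplicity $M$, one has $n_c\le qn_v$, $d_c=M$ after Weierstrass preparation, and $d_{P,c}=d_{P,v}\deg(\phi)$, so
\[
\sum_c n_c d_c d_{P,c}\le n_v d_{P,v}\sum_\Delta q\!\!\sum_{\phi\mid\phi_\Delta}\!\!M\deg(\phi) \le n_v d_{P,v}\sum_\Delta q\deg(\phi_\Delta)\le n_v d_v d_{P,v}.
\]
Iterating, the total size at any level of the tree is at most the root size $n\dy$. Hence at each level, the cost is bounded by
\[
\sum_v \M(n_v d_v^2 d_{P,v})\le \M\!\Bigl(\dy \sum_v n_v d_v d_{P,v}\Bigr)\le \M(n\,\dy^2),
\]
using $d_v\le\dy$ and superadditivity of $\M$.

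Finally, by Lemma~\ref{lem:abhyankar} together with the fact that a branching or non-integer slope step strictly reduces the relevant degree, the recursion depth is $\O(\log\dy)$, and multiplying the per-level bound by the depth gives the claimed $\O(\M(n\,\dy^2)\log \dy)$. The main technical obstacle is the bookkeeping in the conservation argument: one must carefully account for the interplay between $q$, $\deg(\phi)$, $M$, and the Y-length of each edge so that the inequalities tighten to the sharp bound $\sum_c n_c d_c d_{P,c}\le n_v d_v d_{P,v}$, and one must verify that the auxiliary operations (Weierstrass preparation, primitive element, change of representation) all fit within the $\O(\M(n_v d_v^2 d_{P,v}))$ per-node budget.
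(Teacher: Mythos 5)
The per-node cost accounting and the level-by-level conservation $\sum_v n_v d_v d_{P,v} \le n\,\dy$ are both sound, but the final step of your argument contains a fatal gap: the recursion depth of \arnp{} is \emph{not} $\O(\log\dy)$.

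Lemma~\ref{lem:abhyankar} only guarantees that, after the Tschirnhausen shift, you get a branch separation, a non-integer slope, or a non-trivial factor of $\phi_\Delta$. When $q>1$ or $d_\phi>1$ the child degree is indeed at most halved, and such events happen at most $\log\dy$ times along any root-to-leaf path. But a branch separation with $q=d_\phi=1$ need only peel off a single expansion: $\phi_\Delta$ may factor as $(W-a)(W-b)^{d_v-1}$, leaving a child of degree $d_v-1$. A chain of such separations (one per RPE, each dropping the degree by one) yields a recursion tree of depth $\Theta(\rho)$, and $\rho$ can be as large as $\dy$. The paper's own sentence ``this happens at most $\O(\rho\log\dy)$ times'' bounds the \emph{total} number of calls, not the depth. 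Once the depth is $\Theta(\dy)$, multiplying your (correct) per-level bound $\M(n\,\dy^2)$ by the depth gives $\O(\dy\,\M(n\,\dy^2))$, a factor $\dy/\log\dy$ off from the target.

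The paper avoids this by amortising across the recursion tree rather than by levels: (i) pairs $(\Delta,\phi)$ with $q=d_\phi=1$ force a branch separation and hence number at most $\rho-1$, each costing $\O(\M(n\,\dy))$ for a total $\O(\rho\,\M(n\,\dy)) \subset \O(\M(n\,\dy^2))$; (ii) pairs with $q>1$ or $d_\phi>1$ are charged to one of their downstream RPEs $R_i$ (using $q\le e_i$, $d_P d_\phi \le f_i$), each $R_i$ receives at most $\log(e_i f_i)\le\log\dy$ such charges, and super-additivity of $\M$ together with $\sum_i e_i f_i = \dy$ gives $\O(\M(n\,\dy^2)\log\dy)$. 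Your level-wise conservation invariant is a genuinely nice observation and captures why the per-level work stays bounded, but to close the argument you would need to replace the $\O(\log\dy)$ depth claim by this finer RPE-based charging scheme (or, alternatively, show directly that the sum over \emph{all} levels of the per-level costs telescopes to $\O(\M(n\,\dy^2)\log\dy)$, which is not implied by the uniform $\M(n\,\dy^2)$ bound alone).
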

\begin{proof}
  Let's consider a function call to \arnp{}($H,P,n_H,\pi$) and denote
  $d_P=\deg_Z(P)$. We distinguish two kind of lines (for both, note
  the bound $n\,\dy\geq n_H\,\deg_Y(H)\,d_P$):
  \begin{enumerate}[(Type 1)]
  \item \label{item-rho} By Lemma \ref{lem:bivshift}, Line
    \ref{algoARNP:Abhyankarshift} takes less than $\O(\M(n\,\dy))$
    operations over $\Ki$. So do Lines \ref{algoARNP:piAbhyankar} and
    \ref{algoARNP:piRNPShift}, by respectively Lemmas
    \ref{lem:bivshift} and \ref{lem:onesubs}, using Remark
    \ref{rem:size-pi} and $e_i\,f_i\leq\dy$.
  \item \label{item-qphi} Lines \ref{algoARNP:updateN} and
    \ref{algoARNP:WPT} are $\O(\M(q\,d_{\phi}\,n\,\dy))$ from
    respectively Lemma \ref{lem:onesubs} and Proposition
    \ref{prop:wptalgo}. By Proposition \ref{prop:primeltKi}, so is
    Line \ref{algoARNP:prim}, while Line \ref{algoARNP:changeRep}
    costs $\O((d_P d_{\phi})^{\frac{\omega+1}{2}})$.
  \end{enumerate}
  From Lemma \ref{lem:abhyankar}, when $q=d_{\phi}=1$, we must have a
  branch separation. Therefore, this happens at most $\rho-1$ times
  (more precisely, the number of pairs $(\Delta,\phi)$ with
  $q=d_{\phi}=1$ while considering all recursive calls is bounded by
  $\rho$). This means that the sum of the costs for these cases is
  less than $\O(\rho\,\M(n\,\dy))\subset\O(\M(n\,\dy^2))$.

  To conclude the proof, we still have to deal with all the cases
  where $q>1$ or $d_{\phi}>1$. In such a case, Type \ref{item-qphi}
  lines are the costly ones. Moreover, we can bound $q$ by $e_i$ and
  $d_P d_{\phi}$ by $f_i$ for any \RPE{} $R_i$ issued from
  $(\Delta,\phi)$. But for each \RPE{} $R_i$, such situation cannot
  happen more than $\log(e_i\,f_i) \leq \log(\dy)$ times (before
  and/or after separation of this branch with other ones). From
  Definition \ref{dfn:M}, that means we can bound the total cost for
  all these cases by  $\O((\M(\sum_{i=1}^\rho e_i\,f_i\,n\,\dy) +
  \sum_{i=1}^{\rho} f_i^{\frac{\omega+1}2})\log(\dy)) \subset
  \O(\M(n\,\dy^2)\log(\dy))$.
\end{proof}
\paragraph*{Proof of Proposition \ref{prop:arnp-correctness}.} As far
as correctness is concerned, we only have to take care of truncations
and the precision of the output: other points are considered in
previous papers of the first author \cite{Po08, PoRy11, PoRy15} (note
also \cite[Section 4.1]{Du89} concerning the construction of the
output). From Lemma \ref{lem:tronc}, a function call
\arnp$(F,Z,6\vRF/\dy,(X,Y)))$ provides (at least) the Puiseux series
satisfying $v_i< 2\vRF/d_Y$ with precision $4\vRF/\dy$ or greater. As
$r_i/e_i \leq v_i$ from Lemma \ref{lem:ri-vi}, their singular parts
are known. Also, from Proposition \ref{prop:vi-V}, we get at least
half of the Puiseux series of $F$.  Complexity is Proposition
\ref{prop:arnp-complexity}. $\hfill\square$



\section{A divide and conquer algorithm.}
\label{sec:rnp3}
We keep notations of Sections \ref{sec:intro} and \ref{sec:algo}, and prove in this
section the following result:
\begin{thm}\label{thm:puidV-details}
  Not taking into account the cost of univariate factorisations, there
  exists an algorithm that computes the singular part of all rational
  Puiseux expansions of $F$ above $x_0=0$ in less than
  $\O(\M(\dy\,\vRF)\,\log(\dy\,\vRF)+\M(\dy)\,\log^2(\dy))$ arithmetic
  operations.
\end{thm}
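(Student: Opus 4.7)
The plan is to implement Idea \ref{id:hens}: a recursive divide-and-conquer built on top of Theorem \ref{thm:half-Puiseux}. At each recursion step on a current polynomial $H$, I would (i) run \arnp{} on $H$ to get at least half of its RPEs together with their singular parts, (ii) form a partial product $G$ gathering these expansions, (iii) lift an analytic factorisation $H \equiv G\,\widetilde H \pmod{X^{\vF[H]+1}}$ via an adapted Hensel step, and (iv) recurse on $\widetilde H$, whose $Y$-degree is at most $\deg_Y(H)/2$. This matches the complexity budget because the recursion depth is logarithmic in $\dy$ and each level costs essentially $\Ot(\dy\,\vRF)$.

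Concretely, the first call \arnp$(F,Z,6\vRF/\dy,(X,Y))$ returns, by Proposition \ref{prop:arnp-correctness}, in expected $\O(\M(\dy\,\vRF)\log\dy)$ operations, a family $R_1,\dots,R_\lambda$ of RPEs known with precision at least $4\vRF/\dy\ge r_i/e_i$ and satisfying $\sum_{i\le\lambda}e_if_i\ge\dy/2$. From their singular parts I would build, for a truncation order $n\in\O(\vRF/\dy)$, a polynomial $G\in\Ki[X,Y]/(X^n)$ of $Y$-degree $\geq \dy/2$ that divides $F$ modulo $X^n$, by taking the product of the truncated minimal polynomials of the associated classical Puiseux series over a primitive representation of their residue fields.

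The next step upgrades this partial factorisation into $F\equiv G\,\widetilde H \pmod{X^{\vRF+1}}$ with $\deg_Y(\widetilde H)\le \dy/2$. Because the fiber $X=0$ is critical, $G(0,Y)$ and $\widetilde H(0,Y)$ share roots, so the classical Hensel lemma does not apply directly. I would instead run an $X$-adic doubling lifting driven by a Bézout relation $UG+V\widetilde H = X^{\kgh}$ computed through the Moenck--Schönhage style half-gcd of \cite{MoSc16}. The crucial quantitative ingredient, to be extracted from Lemma \ref{lem:Ni-vFi}, Lemma \ref{lem:ri-vi}, Corollary \ref{cor:Ni-vi} and Proposition \ref{prop:vi-V}, is the bound $\kgh\in\O(\vRF/\dy)$; granting it, each doubling round takes $\O(\M(\dy\,\vRF))$ operations via fast multiplication in $\Ki[Y]\otimes\Ki[X]/(X^{\vRF+1})$, and a logarithmic number of rounds suffices to reach precision $\vRF+1$, yielding a per-level cost of $\O(\M(\dy\,\vRF)\log(\dy\,\vRF))$.

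To conclude I would recurse on $\widetilde H$: since $\deg_Y(\widetilde H)\le\dy/2$ and $\vF[\widetilde H]\le\vRF$ (the RPEs of $\widetilde H$ form a subset of those of $F$, with contribution to the resultant valuation controlled by Proposition \ref{prop:eifi} and the product formula for resultants), the recursion depth is at most $\lceil\log_2\dy\rceil$ and the cost at depth $k$ is bounded by $\O(\M((\dy/2^k)\,\vRF)\log(\dy\,\vRF))$. A geometric summation based on Lemma \ref{lem:M} then yields the announced bound, with the $\M(\dy)\log^2\dy$ term absorbing $\vRF$-independent overheads (primitive element computations, Tschirnhausen shifts in $Y$, and arithmetic over the nested residue fields) accumulated across $\O(\log\dy)$ levels. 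The main obstacle I expect lies in the analytic-Hensel machinery with tight Bézout exponent: proving that a precision-$\O(\vRF/\dy)$ Bézout certificate actually drives a full $X^{\vRF+1}$-adic lifting, that it can be refreshed cheaply at every doubling round, and that the resulting $\widetilde H$ is precisely the expected analytic factor of $F$ (so that recursion on $\widetilde H$ computes exactly the remaining RPEs of $F$ with their correct singular parts).
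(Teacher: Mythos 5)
Your sketch of the monic case closely follows the paper's Proposition~\ref{prop:Wrnp3} and Algorithm~\wrnp{}: run \arnp{} with truncation $6\vRF/\dy$, build the partial product $G$ via a norm computation (Lemma~\ref{lem:norm}), compute the cofactor $H$ by euclidean division, lift $F=G\,H$ up to $X^n$ with $n\geq\vRF$ by the generalised Hensel machinery of Sections~\ref{ssec:bezout} and~\ref{ssec:hensel}, then recurse; the geometric summation via Lemma~\ref{lem:M} is exactly what the paper does. The two obstacles you flag are the right ones to check, but note that the bound $\kgh\le 2\vRF/\dy$ does not follow just from the lemmas you list — it requires the interpolation argument of Proposition~\ref{prop:bound-kgh} and Corollary~\ref{cor:bound-kgh}, which relate the lifting order $\kgh(G,H)$ to $\max_{H(S)=0}\val(F_Y(S))$ — and the exactness/uniqueness of the lifted cofactor is Lemma~\ref{lem:hensel}.

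The genuine gap is that Theorem~\ref{thm:puidV-details} does not assume $F$ monic, and your proposal handles only the monic case. Algorithm~\arnp{} and the Weierstrass/Hensel lifting you invoke all presuppose monicity; for general primitive $F$ one also has Puiseux series with negative $X$-valuation, i.e.\ RPEs centered at $(0,\infty)$. Section~\ref{ssec:proofD3-general} devotes most of its effort to these: run \monic{} to split $F=u\,F_0\,F_\infty\bmod X^n$ with $F_0$ monic and $F_\infty(0,Y)=1$, then recover the $(0,\infty)$-centered RPEs of $F_\infty$ by computing RPEs of the monic reciprocal polynomial $\tilde F_\infty$ and inverting their second component. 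The delicate point is Proposition~\ref{prop:prec-infty}: it certifies, via Lemma~\ref{lem:v1/s} and a case-by-case valuation analysis, that working at precision $n=\vRF$ on $F$ really does let one recover the singular parts of the inverted RPEs. Remark~\ref{rem:vRF} shows this is not automatic — $\val(\disc F)$ alone fails on $F_3=1+X\,Y^{d-1}+X^{d+1}\,Y^d$, and it is precisely the extra $\val(\lc Y F)$ in $\vRF$ that rescues the bound. Without this analysis your argument proves a strict special case of the theorem.
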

Assuming that $F$ is monic, our strategy can be summarised as follows:
\begin{enumerate}
\item Run \arnp{}($F,Z,6\,\vRF/\dy,(X,Y)$). If this provides all
  \RPE{}s of $F$, we are done. If not, from Section \ref{sec:algo}, we
  get at least half of the Puiseux series of $F$, satisfying $v_i<2\,\vRF/\dy$,
  and known with precision $4\,\vRF/\dy$ or more.
\item From these Puiseux series, construct the associated irreducible
  factors and their product $G$ with precision $4\,\vRF/\dy$ ; cf
  Section \ref{ssec:norm}. Note that $\deg_Y(G) \geq \dy/2$.
\item Compute its cofactor $H$ by euclidean division modulo
  $X^{4\,\vRF/\dy+1}$.
\item Compute the B\'ezout relation $U\,G+V\,H=X^\kgh \mod X^{\kgh+1}$
  via \cite[Algorithm 1]{MoSc16}. We prove in Section
  \ref{ssec:bezout} that $\kgh\leq 2\,\vRF/\dy$.
\item Using this relation, lift the factorisation
  $F=G\,H \mod X^{4\,\vRF/\dy+1}$ to precision $\vF$ using a variant of
  the Hensel lemma. See Section \ref{ssec:hensel}.
\item Finally, apply the main algorithm recursively on $H$; as the
  degree in $Y$ is at least divided by two each time, this is done at
  most $\log(\dy)$ times, for a total cost only multiplied by
  $2$. This is detailed in Section \ref{ssec:proofD3}.
\end{enumerate}
If $F$ is not monic (this assumption is not part of Theorem
\ref{thm:puidV-details}), first use Hensel lifting to
compute the factor $F_{\infty}$ corresponding to RPEs centered at
$(0,\infty)$ up to precision $X^{\vRF}$. Then, compute the RPEs
of $F_{\infty}$ as ``inverse'' of the RPEs of its reciprocal polynomial
(which is monic by construction). Details are provided in Section
\ref{ssec:proofD3-general}.

\subsection{Computing the norm of a \RPE{}.}
\label{ssec:norm}
\begin{lem}
  \label{lem:norm}
  Let $R_1,\cdots,R_\lambda$ be a set of $\Ki$-\RPE{}s not centered at
  $(0,\infty)$. For each $R_i$, we denote $(S_{ijk})_{jk}$ its associated
    Puiseux series. Let
  \[
  \nu=\max_{1\leq i\leq\lambda}
  \sum_{\substack{(i',j',k')\\\neq(i,j,k)}}\val(S_{ijk}(X)-S_{i'j'k'}(X))
  \]
  and assume that the $R_i$ are known with precision $n\geq\nu$. Then there
  exists an algorithm \norm{} that computes
  $G\in\Ki[X,Y]$ monic with $\deg_Y(G)=\sum_{i=1}^\lambda e_i\,f_i$,
  $\deg_X(G)=n+\nu$, and such that the \RPE{} of $G$ with precision $n$ are
  precisely the $R_i$. It takes less than
  $\O(\M(n\,\deg_Y(G)^2)\,\log(n\,\deg_Y(G))) \subset{} \Ot(n\,\deg_Y(G)^2)$
  arithmetic operations over $\Ki$.
\end{lem}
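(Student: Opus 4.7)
\textbf{Proof plan for Lemma~\ref{lem:norm}.}
The strategy is to compute $G$ in two stages: first build, for each rational Puiseux expansion $R_i$, the monic factor $F_i\in\Ki[X][Y]$ of $Y$-degree $e_if_i$ whose roots (as a polynomial in $Y$) are exactly the Puiseux series $(S_{ijk})_{jk}$ associated to $R_i$; then assemble $G=F_1\cdots F_\lambda$ via a balanced subproduct tree in $Y$, truncating every intermediate quantity modulo $X^{n+\nu+1}$.

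For the first stage, starting from $R_i=(\gamma_iT^{e_i},\Gamma_i(T))$ with $\Gamma_i\in\Ki_i[[T]]$ truncated to precision $n$, the polynomial $\prod_{k=0}^{e_i-1}(Y-\Gamma_i(\zeta_{e_i}^kT))$ is invariant under $T\mapsto\zeta_{e_i}T$, so belongs to $\Ki_i[T^{e_i}][Y]$; substituting $T^{e_i}=X/\gamma_i$ yields a polynomial in $\Ki_i[X][Y]$. Taking its norm down to $\Ki$ (for example as a resultant in $Z$ against the minimal polynomial of a primitive element of $\Ki_i$, or as a product of Galois conjugates) produces $F_i\in\Ki[X][Y]$ of $Y$-degree $e_if_i$. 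Using fast polynomial multiplication and Kronecker substitution, the evaluations at the $e_i$-th roots of unity, the subproduct tree over $k$, and the norm together cost $\Ot(n\,e_if_i\,(e_i+f_i))$ per index $i$, which sums into $\Ot(n\deg_Y(G)^2)$. The second stage, a balanced subproduct tree on the $\lambda\leq\deg_Y(G)$ factors, has $\O(\log\lambda)$ levels, each of total cost $\Ot(n\deg_Y(G))$ since the sum of $Y$-degrees at each level equals $\deg_Y(G)$ and the $X$-truncation is bounded by $n+\nu\leq 2n$; this stage is therefore dominated by the first.

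The hardest part is the correctness and truncation analysis: one must verify that computing all elementary symmetric functions of the $S_{ijk}$ from their truncations at precision $n$ pins down $G$ modulo $X^{n+\nu+1}$ exactly, and that the resulting truncated $G$ already has its \RPE s at precision $n$ coinciding with the prescribed $R_i$. This rests on a valuation count using the definition of $\nu$: the key point is that $\nu$ bounds precisely the worst pairwise interaction $\sum_{(i',j',k')\ne(i,j,k)}\val(S_{ijk}-S_{i'j'k'})$ which controls $\val(\partial_Y G(X,S_{ijk}))$, and by the same kind of argument underlying Lemma~\ref{lem:ri-vi}, the hypothesis $n\geq\nu$ then forbids any Puiseux series of the computed $G$ from drifting away from the prescribed $R_i$ within precision $n$. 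Once this separation estimate is in place, the complexity bookkeeping is routine.
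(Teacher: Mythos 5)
Your proof follows essentially the same route as the paper's: for each $R_i$ you form the product over the $e_i$ ramification conjugates, reduce to $\Ki[X,Y]$ by taking the norm (the paper uses $\res_Z(\cdot,P_i)$ where $P_i$ is the defining polynomial of the residue field), and then multiply the resulting factors via a subproduct tree, truncating modulo $X^{n+\nu+1}$ throughout; your per-index cost $\Ot(n\,e_if_i(e_i+f_i))$ matches the paper's two-term bound $\O(\M(e_i^2 n f_i)\log e_i)+\O(f_i\M(n e_i f_i)\log(n e_i f_i))$. The correctness/truncation discussion you flag as the delicate step is resolved by the same observation you make — $\nu = \max_i \val(\partial_Y G(S_{ijk}))$, so $n\ge\nu$ guarantees the precision-$n$ RPEs of the truncated $G$ are the prescribed $R_i$ — and the paper treats it with the same brevity.
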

\begin{proof}
  Denote $P_i\in\Ki[Z]$ so that
  $R_i=(\gamma_i(Z)\,T^{e_i},\Gamma_i(Z,T))$ is defined over
  $\Ki_{P_i}$. Compute
  \[
  A_i = \prod_{j=0}^{e_i-1}
  \left(Y-\Gamma_i\left(Z,\zeta_{e_i}^j\left(\frac
        X{\gamma_i}\right)^{\frac{1}{e_i}}\right)\right) \mod
  (X^{n+\nu+1},P_i(Z))
  \]
  for $1\leq i\leq\lambda$. As $n\geq \nu$, it takes
  $\O(\M(e_i^2\,n\,f_i)\log(e_i))$ operations in $\Ki$ using a
  sub-product tree. Then, compute
  $G_i=\res[Z](A_i,P_i) \mod X^{n+\nu+1}$. Adapting \cite[Corollary
  11.21, page 332]{GaGe13} to a polynomial with three variables, this
  is $\O(f_i\,\M(n\,e_i\,f_i)\,\log(n\,e_i\,f_i))$. Summing over $i$
  these two operations, this fits into our bound. Finally, compute $G$
  the product of the $G_i$ modulo $X^{n+\nu+1}$ in less than
  $\O(\M(n\,\deg_Y(G))\log(\deg_Y(G)))$ using a sub-product tree
  \cite[Algorithm 10.3, page 297]{GaGe13}. It has the required
  properties.
\end{proof}

\subsection{Lifting order.}
\label{ssec:bezout}
Our algorithm requires to lift some analytic factors $G,H$ of $F$
which are not coprime modulo $(X)$. To this aim, we will generalise
the classical Hensel lifting. The first step is to compute a
generalized B\'ezout relation $UG+VH=X^{\kgh}$ with $\kgh\in\Ni$ minimal.
\begin{dfn}
  Let $G,H\in \Ki[[X]][Y]$ coprime. The \emph{lifting order} of
  $G$ and $H$ is:
  \[
  \kgh(G,H):=\inf\, \{k\in \Ni, \,\, X^k\in (G,H)\}.
  \]
\end{dfn}
We now provide an upper bound for the lifting order
that is sufficient for our purpose.
\begin{prop}
  \label{prop:bound-kgh} If $F=G\cdot{}H$ with $H$ monic, we have
  $\displaystyle \kgh(G,H) \leq \max_{H(S)=0} \val(F_Y(S))$.
\end{prop}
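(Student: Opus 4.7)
The plan is to produce explicit B\'ezout coefficients $U,V \in \Ki[[X]][Y]$ such that $UG + VH = X^N$ with $N \leq M := \max_{H(S)=0}\val(F_Y(S))$. Since $F=GH$ is separable in $Y$ and $H$ is monic, $H$ is itself separable; writing $H(Y) = \prod_{i=1}^d (Y-S_i)$ over a sufficiently large Puiseux extension $\algclos{\Ki}((X^{1/e}))$, we get $d = \deg_Y H$ distinct roots $S_i$, each integral ($\val(S_i) \geq 0$) since $H$ is monic with coefficients in $\Ki[[X]]$. Differentiating $F = GH$ and evaluating at $S_i$ (where $H(S_i)=0$) yields the crucial identity $F_Y(S_i) = G(S_i)\,H_Y(S_i)$.

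Next I would write down the Lagrange interpolation formula for $G^{-1} \bmod H$ in the fraction field and use this identity together with $H_Y(S_i) = \prod_{j\neq i}(S_i-S_j)$ to eliminate $G(S_i)$ from the denominators:
\[
U(Y) \;:=\; \sum_{i=1}^d \frac{1}{G(S_i)}\prod_{j\neq i}\frac{Y-S_j}{S_i-S_j} \;=\; \sum_{i=1}^d \frac{\prod_{j\neq i}(Y-S_j)}{F_Y(S_i)}.
\]
A priori this produces $U \in \algclos{\Ki}((X^{1/e}))[Y]$ of degree less than $d$, but since $G$ and $H$ are coprime, the inverse $G^{-1}$ exists uniquely in $\Ki((X))[Y]/(H)$; by uniqueness, $U$ coincides with this inverse and therefore lies in $\Ki((X))[Y]$.

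Finally I would bound the valuations of the coefficients of $U$. Each polynomial $\prod_{j\neq i}(Y-S_j)$ has coefficients of non-negative Puiseux valuation (as each $S_j$ is integral), while $\val(1/F_Y(S_i)) \geq -M$. Summing, every coefficient of $U$ has Puiseux valuation $\geq -M$; but since these coefficients actually lie in $\Ki((X))$, their valuation is integer-valued, hence $\geq -\lfloor M \rfloor$. Thus $\tilde U := X^{\lfloor M \rfloor}U \in \Ki[[X]][Y]$, and polynomial division of $X^{\lfloor M \rfloor} - \tilde U\,G \in \Ki[[X]][Y]$ by the monic polynomial $H$ yields a cofactor $V \in \Ki[[X]][Y]$, whence $X^{\lfloor M \rfloor} = \tilde U\,G + V\,H$ and $\kgh(G,H) \leq \lfloor M \rfloor \leq M$. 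The main subtle step is the Galois-descent justification that the Puiseux interpolant $U$ lies in $\Ki((X))[Y]$; once this is granted via the uniqueness of the modular inverse, the valuation bound is elementary and the B\'ezout identity drops out.
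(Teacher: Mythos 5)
Your proof is correct and rests on essentially the same computation as the paper: the Lagrange interpolation formula $U=\sum_i \frac{\prod_{j\neq i}(Y-S_j)}{F_Y(S_i)}$ obtained via the identity $F_Y(S_i)=G(S_i)H_Y(S_i)$, combined with $\val(S_j)\ge 0$ from monicity of $H$. The only difference is presentational: the paper starts from a minimal B\'ezout relation $UG+VH=X^\kgh$ and argues $\val(U)=0$ before applying the interpolation bound, whereas you construct an explicit B\'ezout pair $(\tilde U,V)$ from scratch, which avoids having to justify $\val(U)=0$ and is arguably a bit more self-contained.
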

\begin{proof}
  Let $U\,G\,+\,V\,H\,=\,X^\kgh$ in $\Ki[[X]][Y]$, with
  $\kgh=\kgh(G,H)$ minimal. Up to perform the euclidean division of $U$ by
  $H$, we may assume $\deg_Y(U)<\deg_Y(H)=:d$. Moreover, minimality of
  $\kgh$ and monicity of $H$ impose $\val(U)=0$. Denoting
  $S_1,\cdots,S_d$ the Puiseux series of $H$, we have
  $U(S_i)\,G(S_i)\,=\,X^\kgh$ for $1\leq i\leq d$. Using
  interpolation, we get
  \[
    U = \sum_{i=1}^d \frac{X^\kgh}{G(S_i)\,H_Y(S_i)}
    \prod_{j\neq i} (Y-S_j) = \sum_{i=1}^d \frac{X^\kgh}{F_Y(S_i)}
    \prod_{j\neq i} (Y-S_j).
  \]
  As $\val(U)=0$ and $\val(S_j)\ge 0$ ($H$ is monic), we have
  $\displaystyle{} \kgh\leq \max_{1\le i\leq d}\val(F_Y(S_i))$.
\end{proof}
\begin{cor}
  \label{cor:bound-kgh} Assume that $F\in\Ki[[X]][Y]$ is a non
  irreducible monic polynomial. Then there exists a
  factorisation $F=G\,H$ in $\Ki[[X]][Y]$ such that
  $\kgh(G,H)\leq 2\,\vRF/\dy$.
\end{cor}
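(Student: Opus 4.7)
The plan is to apply Proposition \ref{prop:bound-kgh} to a well-chosen non-trivial splitting $F = G\,H$. Let $F = F_1 \cdots F_\rho$ be the decomposition of $F$ into distinct monic irreducible factors of $\Ki[[X]][Y]$. Since $F$ is not irreducible, $\rho \geq 2$, and by the standing notations every Puiseux series $S$ associated to $F_i$ satisfies $\val(F_Y(S)) = v_i$.

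After reordering the $F_i$, I would assume $v_1 = \min_{1 \leq i \leq \rho} v_i$ and set $H := F_1$ and $G := F_2 \cdots F_\rho$. Both factors are monic and non-trivial, and the roots of $H$ are exactly the Puiseux series attached to $F_1$, so Proposition \ref{prop:bound-kgh} gives
$$\kgh(G,H) \;\leq\; \max_{H(S)=0} \val(F_Y(S)) \;=\; v_1.$$
The classical resultant identity $\vRF = \sum_{i=1}^\rho v_i\, e_i\, f_i$ (already invoked in the proof of Proposition \ref{prop:vi-V}) combined with $\sum_{i=1}^\rho e_i f_i = \dy$ from Proposition \ref{prop:eifi} yields the averaging inequality $v_1\, \dy \leq \vRF$. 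Hence $v_1 \leq \vRF/\dy \leq 2\,\vRF/\dy$, which is the claim.

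The proof is essentially immediate given Proposition \ref{prop:bound-kgh} and the resultant formula; the only real content is choosing the factor with minimum $v_i$ on the $H$ side so that the proposition produces a bound that the averaging argument can dominate, and observing that non-irreducibility of $F$ guarantees a non-trivial split. The slightly looser bound $2\,\vRF/\dy$ stated in the corollary (rather than the sharper $\vRF/\dy$ we actually obtain) is convenient downstream: in the algorithm, the split is produced from the Puiseux series furnished by Proposition \ref{prop:vi-V}, whose $v_i$ are only guaranteed to satisfy $v_i < 2\,\vRF/\dy$, so the factor of $2$ is the natural bound to carry through.
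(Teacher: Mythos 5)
Your proof is correct and slightly more elementary than the paper's. Both arguments rest on Proposition~\ref{prop:bound-kgh} and on the resultant identity $\vRF=\sum_i v_i\,e_i\,f_i$ together with $\sum_i e_i f_i=\dy$; the difference is in the choice of split. The paper invokes Proposition~\ref{prop:vi-V} (which sorts the $v_i$ and averages) and takes $H$ to be the product of \emph{all} factors $F_i$ with $v_i<2\vRF/\dy$, then applies Proposition~\ref{prop:bound-kgh}. You instead take $H$ to be the single irreducible factor of minimal $v_i$ and run the averaging directly, which in fact yields the sharper bound $\kgh(G,H)\le v_1\le\vRF/\dy$. The trade-off is that the paper's split additionally satisfies $\deg_Y(H)\ge\dy/2$ (equivalently $\deg_Y(G)\le\dy/2$), which is what bounds the recursion depth by $\log\dy$ in Algorithm \wrnp{}; your single-factor split would not give that and so could not be substituted into the algorithm, but it proves the corollary as stated. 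Your closing remark — that the constant $2$ in the corollary is the natural one to carry because the algorithm obtains the split from Proposition~\ref{prop:vi-V}, which only guarantees $v_i<2\vRF/\dy$ — is exactly the right reading of why the statement is phrased with the looser constant.
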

\begin{proof}
  From Proposition \ref{prop:vi-V}, there exist $\lambda\ge 1$ \RPE{}
  $R_1,\cdots,R_\lambda$ of $F$ such that $v_i < 2\,\vRF/\dy$ for all
  $i\leq\lambda$. Considering $H=\prod_{i=1}^\lambda F_i$ and
  $G=\prod_{i=\lambda+1}^\rho F_i$ (with $F_i$ the analytic factor
  associated to $R_i$ - see Section \ref{ssec:puiseux}), we are
  done from Proposition \ref{prop:bound-kgh}.
\end{proof}
The relation $U\,G+V\,H=X^\kgh\mod X^{\kgh+1}$ can be computed in
$\O(\M(\dy\,\kgh)\,\log(\kgh) + \M(\dy)\,\kgh \log(\dy))$
\cite[Corollary 1]{MoSc16}. This is $\O(\M(\vRF)\,\log(\vRF))$ for
$(G,H$) of Corollary \ref{cor:bound-kgh}

\subsection{Adaptation of Hensel's lemma to our context.}
\label{ssec:hensel}
We generalise the classical Hensel lemma \cite[section 15.4]{GaGe13}
when polynomials are not coprime modulo $X$. First, the following
algorithm ``double the precision'' of the lifting: given $F$, $G$,
$H$, $U$, $V\in \Ki[X,Y]$ with $H$ monic in $Y$, and $n_0,\kgh\in\Ni$
satisfying
\begin{itemize}
\item $F=G\,H\mod X^{n_0}$ with $n_0>2\,\kgh$,
\item $U\,G+V\,H=X^\kgh\mod X^{n_0-\kgh}$ with $\deg_Y(U)<\deg_Y(H)$,
$\deg_Y(V)<\deg_Y(G)$,
\end{itemize}
it outputs polynomials
$\Gt$, $\Ht$, $\Ut$, $\Vt\in\Ki[X,Y]$ with $\Ht$ monic in $Y$ such that:
\begin{itemize}
\item $F=\Gt\,\Ht\mod X^{2\,(n_0-\kgh)}$, with
  $\Gt=G\mod X^{n_0-\kgh}$ and $\Ht=H\mod X^{n_0-\kgh}$,
\item $\Ut\,\Gt+\Vt\,\Ht=X^\kgh\mod X^{2\,n_0-3\,\kgh}$ ;
  $\deg_Y(\Vt)<\deg_Y(\Gt)$, $\deg_Y(\Ut)<\deg_Y(\Ht)$.%
\end{itemize}

In what follows, \quorem{} denotes
the classical euclidean division algorithm.
\begin{algorithm}[ht]
  \nonl\TitleOfAlgo{\onestep($F,G,H,U,V,n_0,\kgh$)\label{algo:OneStep}}
  $\alpha\assign{}X^{-\kgh}(F-G\cdot H) \mod X^{2\,(n_0-\kgh)}$\;%
  $Q,R\assign{}\quorem_Y(U\cdot\alpha,H) \mod X^{2\,(n_0-\kgh)}$\;%
  $\Gt\assign{}G+\alpha\cdot{}V+Q\cdot G\mod X^{2\,(n_0-\kgh)}$\;%
  $\Ht\assign{}H+R\mod X^{2\,(n_0-\kgh)}$\;%
  $\beta\assign{}X^{-\kgh}(U\cdot{}\Gt+V\cdot\Ht)-1\mod
  X^{2\,n_0-3\,\kgh}$\;%
  $S,T\assign{}\quorem_Y(U\cdot\beta,\Ht) \mod X^{2\,(n_0-\kgh)}$\;%
  $\Ut\assign{}U-T\mod X^{2\,n_0-3\,\kgh}$\;%
  $\Vt\assign{}V-\beta\cdot{}V-S\cdot\Gt\mod X^{2\,n_0-3\,\kgh}$\;%
  \Return{$\Ht$, $\Gt$, $\Ut$, $\Vt$}%
\end{algorithm}

\begin{lem}
  \label{lem:onestep}
  Algorithm \onestep{} is correct; it runs in $\O(\M(n_0\,\dy))$
  operations in $\Ki$.
\end{lem}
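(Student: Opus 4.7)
The plan is to verify correctness of \onestep{} by tracking valuations in $X$ of the intermediate quantities $\alpha, Q, R, \beta$, then derive the two required identities by direct algebraic manipulation; complexity follows from counting a constant number of bivariate multiplications and $Y$-euclidean divisions of bounded bidegree.

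First I would check that each assignment is well-defined. The quantity $\alpha = X^{-\kgh}(F - GH)$ makes sense modulo $X^{2(n_0-\kgh)}$ because $F - GH \equiv 0 \mod X^{n_0}$ by hypothesis and $n_0 > 2\kgh > \kgh$; moreover this valuation bound yields $\val(\alpha)\ge n_0 - \kgh$. Since $H$ is monic in $Y$, the euclidean division $U\alpha = QH + R$ (with $\deg_Y R < \deg_Y H$) transmits the valuation, so $\val(Q), \val(R) \ge n_0 - \kgh$. This immediately gives the degree conditions $\deg_Y \tilde H \leq \deg_Y H$ and, using $\deg_Y V < \deg_Y G$, also $\deg_Y \tilde G \leq \deg_Y G$, as well as the degree bounds on $\tilde U, \tilde V$.

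For the factorisation identity, I would expand $\tilde G \tilde H = (G + \alpha V + QG)(H+R)$ and substitute $QH = U\alpha - R$ to collect
\[
\tilde G \tilde H = GH + \alpha(UG + VH) + \alpha VR + QGR.
\]
Writing $UG + VH = X^{\kgh} + \epsilon_1$ with $\val(\epsilon_1)\ge n_0 - \kgh$, and using $X^{\kgh}\alpha \equiv F - GH$, this becomes $\tilde G \tilde H \equiv F + \alpha \epsilon_1 + \alpha VR + QGR$, where every error term has valuation $\ge 2(n_0 - \kgh)$. For the B\'ezout identity, from $U\beta = S\tilde H + T$ and $X^{\kgh}(1+\beta) \equiv U\tilde G + V\tilde H$, substitution into the definitions of $\tilde U, \tilde V$ gives, after telescoping,
\[
\tilde U \tilde G + \tilde V \tilde H = (U\tilde G + V\tilde H)(1 - \beta) \equiv X^{\kgh}(1+\beta)(1-\beta) = X^{\kgh}(1-\beta^2).
\]
The main obstacle is then bounding $\val(\beta)$: from the expansion $U\tilde G + V\tilde H - X^{\kgh} = \epsilon_1 + U\alpha V + UQG + VR$ and the valuation estimates above one gets $\val(\beta X^{\kgh}) \ge n_0 - \kgh$, hence $\val(\beta)\ge n_0 - 2\kgh$ and $\val(X^{\kgh}\beta^2)\ge 2n_0 - 3\kgh$, which is precisely the required precision.

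For complexity, all arithmetic is done on polynomials whose degree in $Y$ is at most $\dy$ and whose degree in $X$ is $O(n_0)$. The algorithm performs a constant number of bivariate multiplications and $Y$-euclidean divisions by the monic polynomial $H$ (resp.\ $\tilde H$); by Kronecker substitution, each of these costs $\O(\M(n_0\,\dy))$ operations in $\Ki$, as recalled in Section \ref{ssec:comp}. Summing over the constant number of operations in \onestep{} gives the announced $\O(\M(n_0\,\dy))$ bound.
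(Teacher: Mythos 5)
Your proof is correct and follows essentially the same route as the paper: track valuations of $\alpha,Q,R,\beta$, expand $\tilde G\tilde H$ using the B\'ezout relation, then expand $\tilde U\tilde G + \tilde V\tilde H$; your telescoping to $(U\tilde G + V\tilde H)(1-\beta) = X^\kgh(1-\beta^2)$ is the same computation the paper writes as $\beta(X^\kgh - U\tilde G - V\tilde H)$. The complexity argument is likewise the same (constant number of bivariate multiplications and monic $Y$-divisions via Kronecker substitution).
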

\begin{proof}
  From $\alpha\equiv{}0\mod X^{n_0-\kgh}$ (thus
  $Q\equiv{}0\mod X^{n_0-\kgh}$ and $R\equiv{}0\mod X^{n_0-\kgh}$ from
  \cite[Lemma 15.9, (ii), page 445]{GaGe13}) and
  $U\cdot{}G + V\cdot{}H - X^\kgh \equiv{} 0 \mod X^{n_0-\kgh}$, we
  have $\Gt\equiv{} G\mod X^{n_0-\kgh}$,
  $\Ht\equiv{} H\mod X^{n_0-\kgh}$ and
  \begin{eqnarray*}
    F - \Gt\cdot\Ht & \equiv &  F - (G+\alpha\cdot{}V+Q\cdot{G})\cdot
                               (H + \alpha\cdot{}U - Q\cdot{}H)\\%
                    & \equiv & \alpha(X^\kgh - V\cdot{}H - U\cdot{}G) -
                               \alpha^2\cdot{}U\cdot{}V - 
                               Q\cdot\alpha (U\cdot{}G-V\cdot{}H) + 
                               Q^2\cdot{}G\cdot{}H\\%
                    & \equiv & 0\mod X^{2\,(n_0-\kgh)}.
  \end{eqnarray*}
  From $\beta \equiv 0 \mod X^{n_0-2\,\kgh}$ and
  $U\cdot\Gt + V\cdot\Ht - X^\kgh \equiv 0 \mod X^{n_0-\kgh}$, we have:
  \begin{eqnarray*}
    \Ut\cdot\Gt + \Vt\cdot\Ht - X^\kgh & \equiv & (U-U\cdot\beta+S\cdot\Ht)\cdot\Gt+
                                               (V-\beta\cdot{}V-S\cdot\Gt)\cdot\Ht-X^\kgh\\%
                                    & \equiv & U\cdot\Gt + V\cdot\Ht - X^\kgh -
                                               \beta\cdot(U\cdot\Gt + V\cdot\Ht)\\%
                                    & \equiv & \beta\cdot(X^\kgh - U\cdot\Gt -
                                               V\cdot\Ht) \equiv 0 \mod X^{2\,n_0-3\,\kgh}.
  \end{eqnarray*}
  Conditions on the degrees in $Y$ for $\Ht$ and $\Ut$ are obvious
  (thus is the monicity of $\Ht$). The complexity result is similar to
  \cite[Theorem 9.6, page 261]{GaGe13}.
\end{proof}
Assuming we start from a relation $F=G\,H \mod X^{2\,\kgh+1}$ with a
B\'ezout relation $U\,G+V\,H=X^\kgh\mod X^{\kgh+1}$, we thus can
iterate this algorithm up to the wanted precision:
\begin{lem}
  \label{lem:hensel}
  Given $F,G,H$ as in the input of algorithm \onestep{} with
  $n_0=2\,\kgh+1$, there exists an algorithm \hensel{} that computes
  polynomials $(\Gt,\Ht)$ as in the output of \onestep{} for any
  precision $n\in\Ni$, additionally satisfying:
  \begin{itemize}
  \item $\Gt=G\mod X^{\kgh+1}$, $\Ht=H\mod X^{\kgh+1}$ and
    $F=\Gt\cdot{}\Ht\mod X^{n+2\,\kgh}$;%
  \item if there are $G^\star$, $H^\star\in\Ki[X,Y]$
    satisfying $F=G^\star\cdot{}H^\star\mod X^{n+2\,\kgh}$, then $\Gt=G^\star
    \mod X^n$ and $\Ht=H^\star \mod X^n$.%
  \end{itemize}
  It takes less than
  $\O(\M(n\,\dy)+\M(\kgh\,\dy)\,\log(\kgh\,\dy))$ operations in $\Ki$.
\end{lem}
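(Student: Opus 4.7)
The plan is to iterate Algorithm \onestep{} from Lemma \ref{lem:onestep}, each call roughly doubling the ``useful'' precision beyond $\kgh$. I would maintain the invariant that at the beginning of iteration $k$ we have polynomials $G_k,H_k,U_k,V_k$ satisfying $F\equiv G_k H_k \mod X^{n_k}$, $U_k G_k + V_k H_k \equiv X^{\kgh}\mod X^{n_k-\kgh}$, $\deg_Y U_k<\deg_Y H_k$, $\deg_Y V_k<\deg_Y G_k$, and $H_k$ monic, with $n_0=2\kgh+1$ and $G_0,H_0,U_0,V_0$ coming from the input. Applying \onestep{} then produces the next quadruple with $n_{k+1}=2(n_k-\kgh)$, and an easy induction on $k$ gives $n_k=2\kgh+2^k$. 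We stop at the smallest $K$ such that $n_K\ge n+2\kgh$, i.e.\ $K=\lceil\log n\rceil$, and return $\Gt:=G_K$, $\Ht:=H_K$.

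For the two congruences $\Gt\equiv G\mod X^{\kgh+1}$ and $\Ht\equiv H\mod X^{\kgh+1}$, I would observe that \onestep{} modifies its inputs only by terms that are zero modulo $X^{n_k-\kgh}$ (cf.\ the congruences on $\alpha,Q,R$ in the proof of Lemma \ref{lem:onestep}); since $n_k-\kgh\ge \kgh+1$ throughout, the initial $\kgh+1$ terms of $G_k,H_k$ never change, giving the claim. The relation $F\equiv\Gt\Ht\mod X^{n+2\kgh}$ is immediate from $n_K\ge n+2\kgh$. For the uniqueness statement, given another factorisation $F\equiv G^\star H^\star\mod X^{n+2\kgh}$ (necessarily with $G^\star\equiv G\mod X^{\kgh+1}$ and $H^\star\equiv H\mod X^{\kgh+1}$, since these residues are imposed by the factorisation type), I would argue by induction on the precision: the difference $\Delta G_k:=G^\star-G_k$, $\Delta H_k:=H^\star-H_k$ satisfies a linear system in which the B\'ezout relation $U_k G_k+V_k H_k\equiv X^{\kgh}$ forces $\Delta G_k\equiv \Delta H_k\equiv 0$ modulo the current precision shifted by $\kgh$, so by the time $n_k\ge n+2\kgh$ we get agreement modulo $X^n$.

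For complexity, by Lemma \ref{lem:onestep} iteration $k$ costs $\O(\M(n_k\dy))=\O(\M((2\kgh+2^k)\dy))$. I would split the sum at the threshold $k^\star=\lceil\log\kgh\rceil$. For $k\le k^\star$ we have $n_k=\O(\kgh)$ and there are $\O(\log(\kgh\dy))$ such iterations, contributing $\O(\M(\kgh\dy)\log(\kgh\dy))$. For $k^\star<k\le K$ we have $n_k=\O(2^k)$, and the geometric doubling together with property (2) of Lemma \ref{lem:M} yields $\sum_{k>k^\star}\M(2^k\dy)\le \M(2^{K+1}\dy)=\O(\M(n\dy))$. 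Adding the two bounds gives the announced complexity.

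The main obstacle, as I see it, is the uniqueness clause: one must check carefully that the B\'ezout relation maintained through the iterations is strong enough to pin down the lift modulo $X^n$ (and not merely modulo $X^{n-\kgh}$), which requires exploiting the degree bounds on $U_k,V_k$ and the monicity of $H_k$ to invert the linear step in $(\Delta G_k,\Delta H_k)$ without loss. The complexity is then routine once the iteration count and cost-per-iteration are pinned down as above.
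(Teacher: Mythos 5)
Your proposal follows essentially the same route as the paper: compute an initial generalised B\'ezout relation, then iterate \onestep{}, doubling the useful precision $n_k-2\kgh$ at each step, with the complexity bounded by splitting the sum into iterations with $2^k\le\kgh$ (each $\O(\M(\kgh\,\dy))$, with $\O(\log\kgh)$ of them) and a geometric tail dominated by the last term $\O(\M(n\,\dy))$.

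Two details differ from the paper's proof and deserve attention. First, the initial pair $(U_0,V_0)$ is not part of the input of \hensel{} (the lemma only passes $F,G,H$), so it must be computed by \cite[Algorithm 1]{MoSc16}; this step contributes $\O(\M(\dy\,\kgh)\log\kgh+\M(\dy)\,\kgh\log\dy)$, which still fits in the stated $\O(\M(\kgh\,\dy)\log(\kgh\,\dy))$, but it should appear explicitly in the cost accounting. Second, and more substantively, your stopping condition $n_K\ge n+2\kgh$ (i.e. $2^K\ge n$) is one $\kgh$ short of what the paper uses: the loop is run until $n_K-2\kgh\ge n+\kgh$, i.e. $n_K\ge n+3\kgh$. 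The extra $\kgh$ is precisely what guarantees that the B\'ezout relation $\Ut\,\Gt+\Vt\,\Ht\equiv X^\kgh$ is known modulo $X^{n+2\kgh}$ rather than merely $X^{n+\kgh}$, and the paper states explicitly that this is what makes the adaptation of \cite[Theorem 15.14]{GaGe13} go through. You correctly identify the uniqueness clause as the delicate point, but the fix is exactly this stronger stopping rule. Since $\kgh\le n$ in the applications, this adds at most one iteration and does not change the asymptotic complexity, yet it is needed for the correctness claim.
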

\begin{proof}
  The algorithm runs as follows:
  \begin{enumerate}
  \item Compute $U,V\in\Ki[X,Y]$ s.t.
    $U\cdot{}G+V\cdot{}H=X^\kgh \mod X^{\kgh+1}$ \cite[Algorithm 1]{MoSc16}.
  \item Double the value $n_0-2\,\kgh$ at each call of \onestep{}, until
    $n_0-2\,\kgh\geq n+\kgh$.
  \end{enumerate}
  Correctness and complexity follow Lemma \ref{lem:onestep} (using
  \cite[Corollary 1]{MoSc16} for the computation of $U$ and
  $V$). Finally, uniqueness of the result is an adaptation of
  \cite[Theorem 15.14, page 448]{GaGe13} (this works because we take a
  precision satisfying $n_0-2\,\kgh\geq n+\kgh$).
\end{proof}
\begin{rem}
  \label{rem:hensel}
  Note that if $G(0,Y)$ and $H(0,Y)$ are coprime, then $\kgh=0$ and
  this result is the classical Hensel lemma.
\end{rem}


\subsection{The divide and conquer algorithm for monic polynomials.}
\label{ssec:proofD3}
We provide our divide and conquer algorithm. Algorithm \quo{}
outputs the quotient of the euclidean division in $\Ki[[X]][Y]$ modulo
a power of $X$, and $\#\Rc$ is the cardinal of $\Rc$.

\begin{algorithm}[ht]
  \nonl\TitleOfAlgo{\wrnp($F,n$)\label{algo:RNP}}%
  \KwIn{%
    $F\in\Ki[X,Y]$, separable and monic in $Y$ ; $n \in \Ni$ ``big enough''.%
  }%
  \KwOut{%
    the singular part (at least) of all the RPEs
    of $F$ above $x_0=0$.%
  }%
  \leIf{$\dy<6$}{\Return\arnp($F,Z,n,(X,Y)$)\label{algowrnp:linedy6}}{$\eta\gets{}6\,n/\dy$}%
  $\Rc\;\gets\;\arnp(F,Z,\eta,(X,Y))$\label{algowrnp:lineARNP} \; %
  Keep in $\Rc$ the RPEs with $v_i<\eta/3$\tcp*{known with precision $\ge 2\eta/3$}%
  \lIf{$\#\Rc=\dy$}{\Return
    $\Rc$}%
  $G\;\gets\;\norm(\Rc,2\eta/3)$\;\label{algowrnp:lineNorm}%
  $H\;\gets\;\quo(F,G,2\eta/3)$\;\label{algowrnp:lineQuo}%
  $G,H\;\gets\;\hensel(F,G,H,n)$\;\label{algowrnp:lineHens}%
  \Return $\Rc\;\cup\;\wrnp(H,n)$\;\label{algowrnp:lineRec}%
\end{algorithm}

\begin{prop}
  \label{prop:Wrnp3}
  If $n\geq\vRF$, \wrnp$(F,n)$ returns the correct ouput in an
  expected $\O(\M(\dy\,n)\log(\dy\,n))$ operations in $\Ki$, plus the
  cost of univariate factorisations.
\end{prop}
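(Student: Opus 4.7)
The plan is to verify correctness by induction on $\dy$, then bound the per-level cost and sum geometrically over the recursion tree.

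For correctness in the recursive branch $\dy\ge 6$: since $n\ge\vRF$, the truncation parameter $\eta=6n/\dy$ exceeds $6\vRF/\dy$, so Proposition~\ref{prop:arnp-correctness} applies to Line~\ref{algowrnp:lineARNP} and guarantees that \arnp{} returns in particular every \RPE{} with $v_i<2\vRF/\dy$, each known with precision at least $4\vRF/\dy$ and containing its singular part by Lemma~\ref{lem:tronc}. The subsequent filter $v_i<\eta/3=2n/\dy$ keeps only \RPE{}s for which Lemma~\ref{lem:tronc}, combined with the available precision $2\eta/3=4n/\dy$, still certifies the singular part. By Proposition~\ref{prop:vi-V} applied with $\vRF$ replaced by itself, $\sum_{R_i\in\Rc}e_if_i\ge\dy/2$, hence $\deg_Y(G)\ge\dy/2$ after \norm{} (Lemma~\ref{lem:norm}).

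Next, let $F=G^\star H^\star$ denote the true analytic factorisation in $\Ki[[X]][Y]$ separating the \RPE{}s in $\Rc$ from the rest. By construction $G\equiv G^\star\bmod X^{2\eta/3+1}$, and after \quo{} on Line~\ref{algowrnp:lineQuo} we also have $H\equiv H^\star\bmod X^{2\eta/3+1}$. Corollary~\ref{cor:bound-kgh} bounds the lifting order $\kgh(G^\star,H^\star)$ by $2\vRF/\dy\le 2n/\dy$, so the starting precision $2\eta/3=4n/\dy$ comfortably exceeds $2\kgh+1$, meeting the hypothesis of Lemma~\ref{lem:hensel}. Its uniqueness clause then identifies the output of \hensel{} with $(G^\star,H^\star)$ modulo $X^n$. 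Since $\vRF(H)\le\vRF(F)\le n$ and $\deg_Y(H)\le\dy/2$, the recursive call \wrnp$(H,n)$ satisfies the induction hypothesis and produces the remaining singular parts. The base case $\dy<6$ reduces to Proposition~\ref{prop:arnp-correctness} and Lemma~\ref{lem:tronc} applied at the top-level precision.

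For the complexity, consider a recursive level with current $Y$-degree $d$ and the fixed $n$. The call to \arnp{} costs $\O(\M(\eta d^2)\log d)=\O(\M(nd)\log d)$ by Proposition~\ref{prop:arnp-complexity}, using $\eta d=6n$. Line~\ref{algowrnp:lineNorm} costs $\Ot(nd)$ by Lemma~\ref{lem:norm} since the precision is $\O(n/d)$ and the output degree is at most $d$. The truncated euclidean division \quo{} is bounded by $\O(\M(nd))$. Finally \hensel{} costs $\O(\M(nd)+\M(\kgh d)\log(\kgh d))=\O(\M(nd)\log(nd))$ by Lemma~\ref{lem:hensel}, since $\kgh\le 2n/d$. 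Each level is therefore bounded by $\O(\M(nd)\log(nd))$.

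Summing over the at most $\log_2\dy$ recursive levels, the degrees $\dy_k\le\dy/2^k$ together with the superadditivity of $\M$ from Lemma~\ref{lem:M} give a geometric series $\sum_k\M(n\dy/2^k)\le 2\M(n\dy)$, hence the total cost $\O(\M(n\dy)\log(n\dy))$. The principal delicate point is checking that the Hensel preconditions hold throughout the recursion: this relies on Corollary~\ref{cor:bound-kgh} applied to the \emph{true} (rather than computed) analytic factorisation, and on the uniqueness clause of Lemma~\ref{lem:hensel} to transfer the conclusion back to the algorithm's approximation $G,H$.
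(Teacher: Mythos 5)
Your proof follows the same overall decomposition as the paper's (correctness of the recursive step, per-line cost estimates, geometric summation over the recursion via Lemma~\ref{lem:M}), and the complexity bookkeeping is essentially identical. The one point that needs repair is the justification of the Hensel precondition. You invoke Corollary~\ref{cor:bound-kgh} to bound $\kgh(G^\star,H^\star)$, but that corollary only asserts the \emph{existence} of \emph{some} factorisation $F=GH$ with $\kgh(G,H)\le 2\vRF/\dy$ (namely the one built from the $\lambda$ smallest $v_i$'s so that $\sum_{i\le\lambda}e_if_i$ just reaches $\dy/2$). It does not bound the lifting order of the \emph{specific} factorisation the algorithm produces, where $G$ is the norm of \emph{all} RPEs with $v_i<\eta/3=2n/\dy$ --- a possibly larger set when $n>\vRF$. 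What is actually needed, and what the paper uses, is Proposition~\ref{prop:bound-kgh} applied with the roles of $G$ and $H$ interchanged (legitimate since both $G$ and $H$ are monic here): this gives $\kgh(G,H)\le\max_{G(S)=0}\val(F_Y(S))<\eta/3$. In particular the claimed inequality $\kgh(G^\star,H^\star)\le 2\vRF/\dy$ does not follow when $n>\vRF$; only $\kgh<2n/\dy$ does, and the margin against $2\eta/3+1$ is accordingly tight rather than ``comfortable''. With the citation corrected, the rest of your argument (uniqueness clause of Lemma~\ref{lem:hensel} identifying the Hensel output with the true cofactor, $\vRF[H]\le\vRF$ to propagate the induction hypothesis, and the geometric sum over recursion levels) is sound and matches the paper.
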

\begin{proof}
  We start with correctness. As precision
  $n\geq\vRF$ is sufficient to compute the singular parts of all
  Puiseux series via algorithm \arnp{}, the output is correct
  when $\dy<6$. When $\dy\geq 6$, Line \ref{algowrnp:lineARNP}
  provides a set of \RPE{}s $(R_i)_{1\leq i\leq\lambda}$ known
  with precision $\eta-v_i$ by Lemma \ref{lem:tronc}. At line
  \ref{algowrnp:lineNorm}, we keep in $\Rc$ the \RPE{}s $R_i$ such
  that $v_i<\eta/3$; they are thus known with precision at least
  $2\,\eta/3$. Also, we have $\deg_Y(G)\geq\dy/2\geq\deg_Y(H)$ from
  Proposition \ref{prop:vi-V}. Finally, input of the \hensel{}
  algorithm is correct since $\kgh(G,H)$ is less than $\eta/3$ by
  Proposition \ref{prop:bound-kgh} and we know the factorisation
  $F=G\cdot{}H\mod X^{2\,\eta/3+1}$.

  We now focus on complexity. By Proposition
  \ref{prop:arnp-complexity}, Lines \ref{algowrnp:linedy6} ($\dy$ is constant) and \ref{algowrnp:lineARNP} are respectively
  $O(\M(n))$ and $\O(\M(n\,\dy)\,\log(\dy))$. Lines
  \ref{algowrnp:lineNorm}, \ref{algowrnp:lineQuo} and
  \ref{algowrnp:lineHens} take respectively
  $\O(\M(n\,\dy)\,\log(n\,\dy))$, $\O(\M(n\,\dy))$ and
  $\O(\M(n\,\dy)+\M(\vF)\,\log(\vF))$ by respectively Lemma
  \ref{lem:norm}, division via Newton iteration \cite[Theorem 9.4]{GaGe13} and Lemma \ref{lem:hensel}. This
   fits into our result (remember $n\geq\delta$). Finally, as
  $\deg_Y(H)\leq\dy/2$, we conclude from Lemma \ref{lem:M}.
\end{proof}


\subsection{Dealing with the non monic case: proof of Theorem \ref{thm:puidV-details}.}
\label{ssec:proofD3-general}
\begin{prop}\label{prop:multihensel}
  There exists an algorithm \monic{} that given $n\in \Ni$ and
  $F\in \Ki[X,Y]$ primitive in $Y$, returns $u\in \Ki[X]$ and
  $F_0,F_{\infty}\in \Ki[X,Y]$ s.t.  $F=u\,F_0 F_{\infty}\mod X^n$,
  with $F_0$ monic in $Y$, $F_\infty(0,Y)=1$, and $u(0)\ne 0$ with
  $\O(\M(n\,\dy))$ operations over $\Ki$.
\end{prop}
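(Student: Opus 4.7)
The plan is to reduce the statement to one call of Weierstrass preparation (Proposition~\ref{prop:wptalgo}) applied to the reciprocal polynomial of $F$ in $Y$. Write $F = \sum_{i=0}^{d_Y} a_i(X)\,Y^i$. Since $F$ is primitive in $Y$, the polynomial $F(0,Y)$ is nonzero; let $d_0 := \deg_Y F(0,Y)$ and $c := a_{d_0}(0) \ne 0$. If $d_0 = d_Y$ the proposition is immediate (return $u = a_{d_Y}$, $F_0 = F/u \bmod X^n$ and $F_\infty = 1$), so assume $e := d_Y - d_0 \ge 1$.

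First I would build the reversed polynomial $\tilde F(X,Y) := Y^{d_Y}\,F(X,1/Y) = \sum_{i} a_i(X)\,Y^{d_Y - i}$. The primitivity of $F$ in $Y$ transfers to $\tilde F$ (so $X \nmid \tilde F$), and $\val[Y](\tilde F(0,Y)) = e$. Running $\wpt(\tilde F, n)$ therefore returns $\tilde F_\infty \in \Ki[X,Y]$, monic of degree $e$ in $Y$ and satisfying $\tilde F \equiv \tilde F_\infty \cdot \tilde U \bmod X^n$ for a unique cofactor $\tilde U$ with $\tilde U(0,0) = c$; by Proposition~\ref{prop:wptalgo} (with $\dg=1$) this costs $\O(\M(n\,d_Y))$.

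Next I would recover $\tilde U$ explicitly by a single Euclidean division $\tilde F \div \tilde F_\infty$ in $(\Ki[X]/(X^n))[Y]$, which is legal because $\tilde F_\infty$ is monic in $Y$. Using Newton iteration together with Kronecker substitution (so each bivariate multiplication of total size at most $n\cdot d_Y$ costs $\O(\M(n\,d_Y))$, and the doubling iterations telescope by Lemma~\ref{lem:M}), the division also fits in $\O(\M(n\,d_Y))$. I would then reverse back by setting
\[
F_\infty(X,Y) := Y^e\,\tilde F_\infty(X,1/Y), \qquad \tilde F_0(X,Y) := Y^{d_0}\,\tilde U(X,1/Y).
\]
Since $\tilde F_\infty$ is Weierstrass, every non-leading coefficient (in $Y$) vanishes at $X=0$, so $F_\infty(0,Y) = 1$. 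Symmetrically, the leading coefficient of $\tilde F_0$ in $Y$ equals $u(X) := \tilde U(X,0) \in \Ki[X]$, with $u(0) = c \ne 0$. Both reversals are free.

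Finally I would normalize by setting $F_0 := \tilde F_0 / u \bmod X^n$: since $u$ is a unit in $\Ki[X]/(X^n)$, this amounts to one inversion of $u$ modulo $X^n$ (cost $\O(\M(n))$) followed by a coefficient-wise scaling in $Y$ (cost $\O(d_0\,\M(n)) \subseteq \O(\M(n\,d_Y))$). Chaining the identities $F = Y^{d_Y}\,\tilde F(X,1/Y) = F_\infty \cdot \tilde F_0 = u \cdot F_0 \cdot F_\infty \bmod X^n$ gives the required factorization with $F_0$ monic in $Y$, $F_\infty(0,Y)=1$ and $u(0)\ne 0$. The main technical point is ensuring the cofactor extraction fits within the same $\O(\M(n\,d_Y))$ budget as \wpt{} itself; this is exactly what Newton iteration combined with Kronecker substitution for bivariate multiplication delivers.
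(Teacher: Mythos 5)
Your proof is correct, but it proceeds along a genuinely different (and more self-contained) route than the paper's. The paper's own proof is a one-line citation of Musser's Algorithm Q, the same Hensel-style lifting quoted in the proof of Proposition~\ref{prop:wptalgo}: applied directly to $F$, that algorithm produces the full decomposition $u\,F_0\,F_\infty$ in a single pass, handling the non-unit leading coefficient. Instead, you treat $\wpt$ as a black box, pass to the reciprocal polynomial $\tilde F = Y^{\dy}F(X,1/Y)$ so that the ``roots at infinity'' of $F$ become the $X$-adic branch at $Y=0$, pull out the Weierstrass part $\tilde F_\infty$, recover the cofactor $\tilde U$ by one Euclidean division (legitimate because $\tilde F_\infty$ is monic in $Y$, and within the $\O(\M(n\dy))$ budget by Newton iteration plus Kronecker substitution), then reverse and normalize. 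What your route buys is transparency: everything reduces to results already stated in the paper rather than an external reference, and the reversal makes the role of $F_\infty$ (the factor with $F_\infty(0,Y)=1$, i.e.\ the part carrying the poles of the Puiseux series) explicit. What it costs is the extra Euclidean division and the double reversal, though asymptotically both approaches land at $\O(\M(n\dy))$. Two small remarks: your special case $d_0=\dy$ is actually subsumed by the general argument (WPT then returns $\tilde F_\infty=1$), so it could be dropped; and it is worth noting explicitly that $e+d_0=\dy$ is what makes the identity $Y^{\dy}\tilde F_\infty(X,1/Y)\,\tilde U(X,1/Y)=F_\infty\cdot\tilde F_0$ come out exactly, since that is where the two partial reversals $Y^e(\cdot)$ and $Y^{d_0}(\cdot)$ recombine into the full one.
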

\begin{proof}
  This is \cite[Algorithm Q, page 33]{Mu75} (see the proof of
  Proposition \ref{prop:wptalgo}).
\end{proof}
We can now give our main algorithm \rnp{}. It computes the singular
part of all RPEs of $F$ above $x_0=0$, including those centered at
$(0,\infty)$. This algorithm, called with parameters $(F,\vRF)$ is the
algorithm mentioned in Theorem \ref{thm:puidV-details}.

\begin{algorithm}[ht]
  \nonl\TitleOfAlgo{\rnp($F,n$)\label{algo:WRNP}}%
  \KwIn{%
    $F\in\Ki[X,Y]$, separable in $Y$ and $n \in \Ni$ ``big enough''.%
  }%
  \KwOut{%
    the singular part (at least) of all the RPEs of $F$ above $x_0=0$%
  }%
  $(u,F_0,F_\infty) \gets $ \monic($F,n$)\;%
  $\tilde F_\infty\gets Y^{\deg_Y(F_\infty)}F_{\infty}(X,1/Y)$\;%
  $\Rc_\infty\gets$ \wrnp($\tilde F_\infty,n$)\label{algornp:wrnp2}\;%
  Inverse the second element of each $R\in\Rc_\infty$\;%
  \Return{} \wrnp($F_0,n$)$\;\cup\;\Rc_\infty$\;%
\end{algorithm}

The proof of Theorem \ref{thm:puidV-details} follows immediately from
the following proposition:
\begin{prop}\label{prop:rnp}
  Not taking into acount the cost of univariate factorisations,
  \rnp{$(F,\vRF)$} returns the correct output with an expected
  $\O(\M(\dy\,\vRF)\,\log(\dy\,\vRF))$ arithmetic operations.
\end{prop}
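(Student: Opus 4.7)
The plan is to verify correctness by tracing Puiseux series through the two branches of \rnp{}, then to add the costs of the two subroutines. First, Proposition \ref{prop:multihensel} guarantees that \monic{}$(F,\vRF)$ produces $u\in\Ki[X]$ with $u(0)\neq 0$ and $F_0,F_\infty\in\Ki[X,Y]$ with $F\equiv u\cdot F_0\cdot F_\infty\mod X^{\vRF}$, with $F_0$ monic in $Y$ and $F_\infty(0,Y)=1$. Since $u$ is a unit in $\Ki[[X]]$ and Puiseux series are insensitive to multiplication by such units, the RPEs of $F$ above $x_0=0$ split disjointly into the RPEs of $F_0$ (all with finite center) and those of $F_\infty$ (which are precisely those centered at $(0,\infty)$, because $F_\infty(0,Y)=1$ prevents any Puiseux series of $F_\infty$ from having a finite center at $X=0$).

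For the $F_0$ branch, I would invoke Proposition \ref{prop:Wrnp3} with $n=\vRF$. This requires $\vRF\geq\val(\res(F_0,(F_0)_Y))$; but for every Puiseux series $\sigma$ of $F_0$ we have $\val(F_\infty(X,\sigma))=0$ since $F_\infty(0,\sigma(0))=1$, and $\val(u)=0$, whence $\val((F_0)_Y(\sigma))=\val(F_Y(\sigma))$. Summing over the roots of $F_0$ and comparing with the analogous sum for $F$ yields $\val(\res(F_0,(F_0)_Y))\leq\vRF$, so \wrnp{}$(F_0,\vRF)$ returns the singular parts of the RPEs of $F_0$.

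For the $F_\infty$ branch, the plan is to pass to the reciprocal $\tilde F_\infty(X,Y):=Y^{\deg_Y(F_\infty)}F_\infty(X,1/Y)$. The leading coefficient in $Y$ of $\tilde F_\infty$ equals $F_\infty(X,0)$, whose constant term is $F_\infty(0,0)=1$, so $\tilde F_\infty$ is monic in $Y$ modulo $X$ (and can be made exactly monic by dividing by this unit, a cheap operation). The substitution $Y\leftrightarrow 1/Y$ sets up a bijection between RPEs of $F_\infty$ centered at $(0,\infty)$ and RPEs of $\tilde F_\infty$ centered at $(0,0)$, matching their singular parts up to inversion of the $Y$-component. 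An analogous valuation bookkeeping via the chain rule
\[
(\tilde F_\infty)_Y(X,Y)=d\,Y^{d-1}F_\infty(X,1/Y)-Y^{d-2}(F_\infty)_Y(X,1/Y),\quad d=\deg_Y(F_\infty),
\]
shows $\val(\res(\tilde F_\infty,(\tilde F_\infty)_Y))\leq\vRF$, so Proposition \ref{prop:Wrnp3} applies to \wrnp{}$(\tilde F_\infty,\vRF)$. Inverting the second component of each output RPE yields the RPEs of $F_\infty$.

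For complexity, \monic{}$(F,\vRF)$ costs $\O(\M(\vRF\,\dy))$ by Proposition \ref{prop:multihensel}; the two calls \wrnp{}$(F_0,\vRF)$ and \wrnp{}$(\tilde F_\infty,\vRF)$ each cost an expected $\O(\M(\dy\,\vRF)\log(\dy\,\vRF))$ by Proposition \ref{prop:Wrnp3}; the polynomial reversal and the truncated series inversions for the final output are of cost absorbed in the same bound. The total matches the claim. The main obstacle I anticipate is the rigorous verification that $\val(\res(\tilde F_\infty,(\tilde F_\infty)_Y))\leq\vRF$: the reciprocal polynomial is not directly produced by the Hensel/Weierstrass machinery, and the substitution $Y\mapsto 1/Y$ mixes the valuations of $Y$ and of the derivative in a way that must be handled carefully; once this bound is in place, the rest of the argument is bookkeeping.
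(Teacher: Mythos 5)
Your overall decomposition (split into $F_0$ and $F_\infty$ via \monic{}, handle $F_0$ directly with \wrnp{}, handle $F_\infty$ via the reciprocal $\tilde F_\infty$ and invert) is the same as the paper's, but you have misidentified where the real difficulty lies, and the step you treat as "bookkeeping" is exactly where the paper does its work. The obstacle is \emph{not} primarily the bound $\val(\res(\tilde F_\infty,(\tilde F_\infty)_Y))\le\vRF$, which would only guarantee that \wrnp$(\tilde F_\infty,\vRF)$ returns the \emph{singular parts} of the RPEs of $\tilde F_\infty$. The real issue is that knowing the singular part of $\tilde R_i$ is in general \emph{not enough} to recover, after inverting $Y\mapsto 1/Y$, the singular part of the corresponding RPE $R_i$ of $F_\infty$. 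The paper's Remark \ref{rem:vRF} gives an explicit counterexample: for $F_3=1+XY^{d-1}+X^{d+1}Y^d$, the RPE $(T,0)$ of $\tilde F_3$ is already its own singular part, yet inverting $0$ tells you nothing about the RPE $(T,-1/T^d)$ of $F_3$ — one needs extra, post-regularity terms of $\tilde R_i$.

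Concretely, writing $s_i=\val(\Gamma_i)<0$ and $r_i$ for the regularity index of $R_i$, recovering $R_i$ with precision $r_i/e_i$ (i.e. $r_i-s_i+1$ terms) requires knowing $\tilde R_i$ with precision $r_i-2s_i$; this is the content of Lemma \ref{lem:v1/s}, which you would need to invoke. The paper then proves (Proposition \ref{prop:prec-infty}, a three-case argument) that \wrnp{} run with truncation bound $\vRF[F_\infty]$ does in fact compute each $\tilde R_i$ to this higher precision $\frac{r_i-2s_i}{e_i}$ — this is a statement about the \emph{actual} output precision of the divide-and-conquer algorithm, not just about correctness of the returned singular parts, and it is genuinely nontrivial (one case has to exploit the way the truncation bound $\eta$ grows along the recursion in \wrnp{}). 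Your proof omits this entirely and asserts "inverting the second component of each output RPE yields the RPEs of $F_\infty$" without any control on how many terms of $\tilde R_i$ are available. Until you supply an argument along the lines of Proposition \ref{prop:prec-infty}, the $F_\infty$ branch of your correctness proof has a genuine gap. (Your treatment of the $F_0$ branch and the complexity accounting are fine.)
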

There is one delicate point in the proof of Proposition
\ref{prop:rnp}: we need to invert the RPEs of $\tilde F_\infty$ and it
is not clear that the truncation bound $n=\vRF$ is sufficient for
recovering in such a way the singular part of the RPEs of $F_\infty$
(see also Remark \ref{rem:vRF} below). We will need the two following
results:
\begin{lem}
  \label{lem:v1/s}
  Consider two distinct Puiseux series $S$ and $S_0$. Then we have
  \[
  \val\left(\frac 1 S - \frac 1 {S_0}\right) = \val(S - S_0) - \val(S) -
  \val(S_0).
  \]
\end{lem}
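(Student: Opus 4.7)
The plan is to observe that this identity follows from a one-line algebraic manipulation together with the fact that $\val$ is a (multiplicative) valuation on the field $\cup_{e\in\Ni}\algclos{\Ki}((X^{1/e}))$ of Puiseux series. Since $S$ and $S_0$ are distinct, I first note that both must be nonzero for the statement to make sense (otherwise $1/S$ or $1/S_0$ is undefined). Writing
\[
  \frac{1}{S} - \frac{1}{S_0} \;=\; \frac{S_0 - S}{S\,S_0},
\]
I then apply $\val$ to both sides. The axioms of a valuation give $\val(a\,b) = \val(a) + \val(b)$ and $\val(a/b) = \val(a) - \val(b)$ for any nonzero Puiseux series $a,b$, so the right-hand side has valuation $\val(S_0 - S) - \val(S) - \val(S_0)$. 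Since $\val(S_0 - S) = \val(S - S_0)$ (the valuation is invariant under sign, and more generally depends only on the support), the claimed identity follows.

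The only thing to be careful about is that $S$ and $S_0$ both lie in the field of Puiseux series, so their product $S \cdot S_0$ is a nonzero Puiseux series with $\val(S\,S_0) = \val(S) + \val(S_0)$, and the quotient $\tfrac{S_0 - S}{S S_0}$ is again a Puiseux series to which $\val$ applies. There is no real obstacle: the result is essentially the definition of a valuation applied to a common-denominator rewriting, and the proof should fit in two or three lines.
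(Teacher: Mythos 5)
Your proof is correct, and it takes a cleaner route than the paper's. The paper argues by cases on whether $\val(S)=\val(S_0)$: when the valuations differ, the ultrametric inequality (with equality) handles both $\val(S-S_0)$ and $\val(1/S-1/S_0)$ directly; when they agree, the paper works with truncations $X^{-\alpha}S \bmod X^n$ and a two-sided inequality argument to nail down the valuation of the difference of inverses. You instead observe $\tfrac{1}{S}-\tfrac{1}{S_0}=\tfrac{S_0-S}{S\,S_0}$ and invoke the multiplicativity of $\val$ on the field of Puiseux series, which collapses everything into one line. This is a genuinely different and more transparent argument: it replaces the paper's low-level casework with the structural fact that $\val$ is a valuation, which is indeed available here since $\bigcup_{e}\algclos{\Ki}((X^{1/e}))$ is a field and $\val$ is multiplicative and sign-invariant on it. You also correctly flag that $S,S_0$ must be nonzero for the statement to parse, which the lemma leaves implicit (and which is guaranteed in the only place the lemma is used, namely the series of $F_\infty$). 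As a small bonus, your rewriting makes manifest why the lemma should be true, whereas the paper's case analysis does not; and it makes the first case of the paper's proof (where a $\val(S_0)$ should read $-\val(S_0)$ on a quick reading) a non-issue.
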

\begin{proof}
  If $\val(S)\neq \val(S_0)$, one can assume $\val(S)<\val(S_0)$,
  i.e. $\val(S-S_0)=\val(S)$ and
  $\val\left(\frac 1 S - \frac 1 {S_0}\right) = \val(S_0)$. If
  $\val(S)=\val(S_0)=\alpha$, then $\frac {X^\alpha} S\mod X^n$ is
  uniquely determined from $X^{-\alpha}\,S\mod X^n$ (same for
  $S_0$). If $s=\val(S-S_0)-\alpha$,  we have
  $X^{-\alpha}\,S=X^{-\alpha}\,S_0\mod X^s$, i.e.
  $\frac {X^\alpha}S = \frac {X^\alpha}{S_0}\mod X^s$ and
  $\alpha+\val\left(\frac 1 S - \frac 1 {S_0}\right) \geq s$.
  Similarly, denoting
  $s_0=\alpha+\val\left(\frac 1 S-\frac 1 {S_0}\right)$, we get
  $\val(S-S_0)-\alpha\geq s_0$, concluding the proof.
\end{proof}
 \begin{prop}\label{prop:prec-infty}
   Let $F_\infty\in\Ki[X,Y]$ with $F_\infty(0,Y)=1$ and denote
   $\tilde F_\infty$ its reciprocal polynomial according to $Y$. For
   each RPE $R_i=(\lambda_i X^{e_i},\Gamma_i)$ of $F_{\infty}$, denote
   $s_i:=\val(\Gamma_i)$ $($so $s_i<0)$, $r_i$ its regularity index
   and $\tilde R_i$ the associated RPE of $\tilde F_\infty$. The
   function call \wrnp$(\tilde F_\infty,\vRF[F_\infty])$ computes each
   RPE $\tilde R_i$ with precision at least $\frac{r_i-2\,s_i}{e_i}$.
 \end{prop}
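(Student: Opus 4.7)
The plan is to reduce the claim to a numerical inequality between $\vRF[F_\infty]$ and the local invariants attached to $R_i$, and to establish that inequality via the product formula for the resultant combined with the non-archimedean inequality that underlies Lemma~\ref{lem:v1/s}.

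First, I will express $\tilde v_i := \val(\partial_Y \tilde F_\infty(\tilde S_i))$ in terms of $v_i$ and $s_i$. Starting from $\tilde F_\infty(X,Y) = Y^{d_\infty}\,F_\infty(X, 1/Y)$ and using $F_\infty(X, S_i) = 0$, a direct application of the product and chain rules at $Y = 1/S_i$ gives
\[
\partial_Y \tilde F_\infty(1/S_i) \;=\; -\,S_i^{\,2-d_\infty}\,\partial_Y F_\infty(S_i),
\]
so $\tilde v_i = v_i + (d_\infty - 2)\,|s_i|/e_i$. By Lemma~\ref{lem:tronc} applied to the internal \arnp call at the recursion level where $\tilde R_i$ is actually produced (in the base case $d<6$, this call uses the full truncation $n=\vRF[F_\infty]$), the output precision is at least $\vRF[F_\infty] - \tilde v_i$. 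Hence the proposition reduces to
\[
\vRF[F_\infty] \;\ge\; (r_i - 2 s_i)/e_i + \tilde v_i \;=\; r_i/e_i + v_i + d_\infty\,|s_i|/e_i.
\]
For $\tilde R_i$ kept at a shallower level, the filtering test $v_i^{\text{local}} < \eta/3$ combined with $r_i/e_i\le v_i$ (Lemma~\ref{lem:ri-vi}) already places the target precision $(r_i - 2s_i)/e_i$ below the achieved precision $2\eta/3$, so only the base case is non-trivial.

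To prove the displayed inequality, I factor $F_\infty = a_{d_\infty}(X)\prod_S(Y - S)$ over the algebraic closure of $\Ki((X))$. From $a_0(0) = 1$ and $a_0 = a_{d_\infty}\prod_S(-S)$ one gets $\val(a_{d_\infty}) = \sum_S|\val(S)|$, so the classical product formula reads
\[
\vRF[F_\infty] \;=\; (d_\infty - 1)\,\val(a_{d_\infty}) + \sum_S v(S) \;=\; \sum_S\bigl[\,v(S) + (d_\infty - 1)|\val(S)|\,\bigr].
\]
The non-archimedean bound $\val(S - S') \ge -\max(|\val(S)|,|\val(S')|)$ implies that each summand satisfies $v(S) + (d_\infty - 1)|\val(S)| \ge |\val(S)|$. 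For the summand indexed by a Puiseux series $S^* \ne S_i$ realising $r_i/e_i = \val(S_i - S^*)$ (the alternative case $r_i/e_i = \val(S_i)$ making the desired estimate follow immediately from the generic lower bound just obtained), isolating the $S_i$-contribution inside $v(S^*) = \val(a_{d_\infty}) + \sum_{S' \ne S^*}\val(S^* - S')$ gives the sharper estimate
\[
v(S^*) + (d_\infty - 1)|\val(S^*)| \;\ge\; r_i/e_i + |\val(S_i)|.
\]
Combining this sharper bound with the summand $v(S_i) + (d_\infty - 1)|\val(S_i)|$ and the trivial lower bounds for the remaining summands yields the inequality $\vRF[F_\infty] \ge N_i + d_\infty|s_i|/e_i$, hence the claim.

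The main obstacle is the careful accounting in the last step: in the non-monic setting $v(S)$ may be negative (as one sees on small examples involving three Puiseux series with widely different valuations), so termwise positivity of the $v(S)$'s is unavailable; the compensation must be orchestrated globally via the identity $\val(a_{d_\infty}) = \sum_S|\val(S)|$, which redistributes the $(d_\infty - 1)|\val(S)|$ correction into each summand and absorbs the potentially negative cross-differences.
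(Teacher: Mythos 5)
Your proof takes a genuinely different route from the paper's. The paper's argument is a three-way case analysis on the sign of $\val(S_{k_i})-\val(S_{i_0})$ (where $S_{i_0}$ realizes the regularity index), bounding $\tilde v_i$ term-by-term via Lemma~\ref{lem:v1/s} in one case, invoking $v/d\ge -s_i/e_i$ in another, and tracing the visibility of the relevant edge of $\Nc(\tilde F_\infty)$ through the recursion in the last. You instead aim at a single global inequality $\vRF[F_\infty]\ge r_i/e_i+v_i+d_\infty|s_i|/e_i$, derived from the identities $\tilde v_i=v_i+(d_\infty-2)|s_i|/e_i$ and $\val(a_{d_\infty})=\sum_S|\val(S)|$ combined with the resultant product formula. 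That numerical part of your argument is correct and arguably cleaner than the paper's case split, and the chain-rule identity for $\partial_Y\tilde F_\infty(1/S_i)$ is exactly right.

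However, there is a genuine gap in the reduction step, i.e.\ in asserting that \wrnp$(\tilde F_\infty,\vRF[F_\infty])$ achieves precision $\ge\vRF[F_\infty]-\tilde v_i$ on every RPE. You justify this cleanly only in the base case $\deg_Y(H)<6$, where \arnp{} is indeed called with truncation $n=\vRF[F_\infty]$. For RPEs kept at shallower recursion levels, \arnp{} is called with $\eta=6n/\deg_Y(H)<n$, and your one-sentence dismissal (``the filtering test $v_i^{\text{local}}<\eta/3$ combined with $r_i/e_i\le v_i$ already places the target below $2\eta/3$'') does not hold up. First, Lemma~\ref{lem:ri-vi} is stated and proved under the standing monicity hypothesis of Section~\ref{sec:algo}; $F_\infty$ has $F_\infty(0,Y)=1$ and is not monic, and the remark at the end of Section~\ref{ssec:proofD3-general} explicitly warns that for a non-monic $F$ one must replace $v_i$ by $v_i'=v_i+\dy|s_i|/e_i$ to recover $r_i/e_i\le v_i'$. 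Second, the filtering quantity $v_i^{\text{local}}$ is $\val(\partial_Y H(1/S_{k_i}))$ for the current factor $H$ of $\tilde F_\infty$, whereas $r_i,v_i,s_i$ are attached to $F_\infty$; you do not supply the dictionary between the two, and the inequality $(r_i-2s_i)/e_i\le 2\eta/3$ does not follow from $v_i^{\text{local}}<\eta/3$ by any of the quoted lemmas. Finally, the paper's Case~3 (second subcase) exhibits precisely a situation where the relevant edge of $\Nn(H)$ is invisible at shallow levels, so that the RPE can only be produced once $\eta$ has grown to $\ge v$; your uniform argument never verifies that an RPE with large $(r_i-2s_i)/e_i$ is actually forced to the base case, so ``only the base case is non-trivial'' is a claim that itself requires proof. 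The product-formula inequality you prove is correct, but the algorithmic reduction to it is not yet established.
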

\begin{proof}
  Denote $d=\deg_Y(F_\infty)$, $v=\val(\lc Y {F_{\infty}}$,
  $S_1,\cdots,S_d$ the Puiseux series of $F_\infty$ and $S_{k_i}$ one
  of them associated to the RPE $R_i$ of $F_\infty$ we
  are considering. Then $\frac{s_i}{e_i}=\val(S_{k_i})$ and
  $\val(S_{k_i}-S_j)\leq \frac{r_i}{e_i}$ for $j\neq k_i$ by
  definition of $r_i$.  Let $i_0$ satisfying
  $\displaystyle \val(S_{k_i}-S_{i_0})=\max_{j\neq
    k_i}\val(S_{k_i}-S_j)$
  (several values of $i_0$ are possible). We distinguish three cases:
  \begin{enumerate}
  \item \label{enum:eq} $\val(S_{k_i})=\val(S_{i_0})$; then either
    $\val(S_{k_i}-S_{i_0})=\frac{r_i}{e_i}$, or $e_{i_0}=q\,e_i$ with
    $q>1$. In the latter case, there exist $q$ conjugates Puiseux
    series $S_{i_0}^{[0]},\cdots,S_{i_0}^{[q-1]}$ of $S_{i_0}$ such
    that
    $\val(S_{k_i}-S_{i_0})=\val\left(S_{k_i}-S_{i_0}^{[j]}\right)$,
    thus
    $\sum_{j=0}^{q-1} \val\left(S_{k_i}-S_{i_0}^{[j]}\right) \geq
    \frac{r_i}{e_i}$;
    see \cite[Case 3 in Proof of Proposition 5, pages 204 and
    205]{PoRy11} for details.
  \item \label{enum:gt} $\val(S_{k_i})>\val(S_{i_0})$. Then $r_i=s_i$
    and $\val(S_{k_i})>\val(S_j)$ for $j\neq k_i$ by definition of
    $i_0$. This means $\frac \vinf d\geq-\frac{s_i}{e_i}$ as
    $\vinf=\sum_{k=1}^d -\frac{s_k}{e_k}$ from \cite[Lemma 1, page
    198]{PoRy11}.
  \item \label{enum:lt} $\val(S_{k_i})<\val(S_{i_0})$. Then
    $\val(S_{k_i}-S_{i_0})=s_i=r_i$. We can also assume that
    $\val(S_j)\neq\val(S_{k_i})$ for all $j\neq k_i$: if
    $\val(S_j)=\val(S_{k_i})$, then
    $\val(S_{k_i}-S_j)=\val(S_{k_i}-S_{i_0})$ and one could use
    $i_0=j$ and deal with it as Case \ref{enum:eq}.
  \end{enumerate}
  We can now prove Proposition \ref{prop:prec-infty}. First, for Case
  \ref{enum:eq}, knowing $\frac 1 {S_{k_i}}$ with precision
  $\tilde v_i:=\val\left(\partial_Y \tilde F_\infty\left(\frac
      1{S_{k_i}}\right)\right)$
  is sufficient: from Lemma \ref{lem:v1/s}, we have
  $\tilde v_i=\sum_{j\neq i} \val\left(\frac 1{S_{k_i}} - \frac
    1{S_j}\right)=\sum_{j\neq i}\val\left
    (S_{k_i}-S_j\right)-\val(S_{k_i})-\val(S_j)$.
  As either
  $\val(S_{k_i}-S_{i_0})-\val(S_{k_i})-\val(S_{i_0})=\frac{r_i-2\,s_i}{e_i}$
  or
  $\sum_{j=0}^{q-1}\val\left(S_{k_i}-S_{i_0}^{[j]}\right)-\val(S_{k_i})-\val(S_{i_0}^{[j]})
  \geq \frac{r_i-2\,s_i}{e_i}$, we are done.

  Then, concerning Case \ref{enum:gt}, from Proposition
  \ref{prop:Wrnp3} and Lemma \ref{lem:tronc}, we know the RPE $R_i$
  with precision at least $v_i+\frac v d$, that is at least
  $\frac v d \geq -\frac{s_i}{e_i}$. As $r_i=s_i$, this is at least
  $\frac{r_i-2\,s_i}{e_i}$.

  Finally, Case \ref{enum:lt} requires more attention. Let's first
  assume that $\val(S_{k_i})>\val(S_j)$ for some $j\neq i$; then
  $\val(S_{k_i}-S_j)-\val(S_{k_i})-\val(S_j)=\val(S_{k_i})=-\frac{s_i}{e_i}$,
  and we are done since $r_i=s_i$. If not, then we have
  $\val(S_{k_i})<\val(S_j)$ for all $j$. This means that $e_i=f_i=1$,
  and that $\Nc(\tilde F_\infty)$ has an edge $[(0,v),(1,v-s_i)]$,
  which is associated to $\tilde R_i$. It is enough to prove that the
  truncation bound used when dealing with this Puiseux series is at
  least $v$. As long as this is not the case, this edge is not
  considered from the definition of $\Nn(H)$; also, at each recursive
  call of \wrnp{} (Line \ref{algowrnp:lineRec}), the value of the
  truncation bound $\eta$ increases (since the degree in $Y$ is at
  least divided by $2$). In the worst case, we end with a degree $1$
  polynomial, thus using $\eta=\vRF[F_\infty]\geq v$. This concludes.
\end{proof}
\begin{proof}[Proof of Proposition \ref{prop:rnp}]
  Let us show that the truncation bound for $\tilde F_\infty$ is
  sufficient for recovering the singular part of the Puiseux series of
  $F_\infty$.  First note that the inversion of the second element is
  done as follows: consider
  $\tilde R_i(T)=(\gamma_i\,T^{e_i},\tilde
  \Gamma_i(T)=\sum_{k=0}^{\tau_i} \tilde \alpha_{i,k}\,T^k)$
  and denote $s_i=-\val[T](\tilde \Gamma_i(T))<0$; we compute the
  inverse of $T^{s_i}\,\tilde \Gamma_i(T)$ (that has a non zero
  constant coefficient) via quadratic Newton iteration \cite[Algorithm
  9.3, page 259]{GaGe13}; it takes less than $\O(\M(\tau_i+s_i))$
  arithmetic operations \cite[Theorem 9.4, page 260]{GaGe13}. In order
  to get the singular part of the corresponding RPE $R_i$ of
  $F_\infty$, we need to know $R_i$ with precision $\frac{r_i}{e_i}$,
  i.e. to know at least $r_i-s_i+1$ terms. It is thus sufficient to
  know $\tilde R_i$ with precision $r_i-2\,s_i$. This holds thanks to
  Proposition \ref{prop:prec-infty}. Correctness and complexity of
  Algorithm \rnp{} then follow straightforwardly from Propositions
  \ref{prop:Wrnp3} and \ref{prop:multihensel}.
\end{proof}
\begin{rem}
  \label{rem:vRF}
  Note that precision $\val(\disc F)$ is not always enough to get the
  singular part of the Puiseux series centered at $(0,\infty)$, as
  shows the following example. Consider
  $F_3(X,Y)=1+X\,Y^{d-1}+X^{d+1}\,Y^d$. The singular parts of its RPEs
  are $(T,\frac{-1}{T^d})$ and $(-T^{d-1},\frac 1 T)$. Its reciprocal
  polynomial is $\tilde F_3=Y^d+X\,Y+X^{d+1}$, with RPE's singular
  parts $(T,0)$ and $(-T^{d-1},T)$. Here we have $\val (\disc F)=d$,
  and $\tronc {\tilde F_3} d=Y^d+X\,Y$. The singular parts of
  $\tronc {\tilde F_3} d$ are indeed the same than the one of
  $\tilde F_3$, but we cannot recover the RPE $(T,\frac{-1}{T^d})$ of
  $F$ from the RPE $(T,0)$ of $\tronc{\tilde F_3}d$. Nevertheless, the
  precision $\vRF[F_3]=\val(\lc Y {F_3})+\val (\disc F_3)=2d+1$ is
  sufficient.
\end{rem}
\begin{proof}[Proof of Theorem \ref{thm:puidV-details}.] Compute
  $\vRF$ in less than $\O(\M(\dy\vRF)\log(\dy\vRF))$ operations from
  \cite[Lemma 12]{MoSc16}, then run \rnp{}($F,\vRF$) and conclude from
  Proposition \ref{prop:rnp}.
\end{proof}
\begin{rem}
  Another way to approach the non monic case is the one used in
  \cite{PoRy11}. The idea is to use algorithms \wrnp{} and \hrnp{}
  even when $F$ is not monic. This would change nothing as far as
  these algorithms are concerned, but the proof concerning truncation
  bounds must be adapted:
  \begin{enumerate}
  \item define $s_i:=\min(0,\val(S_i))$,
    $N_i':=N_i-\frac{s_i}{e_i}\,\dy$ and
    $v_i':=v_i-\frac{s_i}{e_i}\,\dy$;
  \item prove $N_i'=\frac{r_i}{e_i}+v_i'$ (use \cite[Figure 3]{PoRy11}
    for possible positive slopes of the initial call);
  \item replace $v_i$ by $v_i'$ and $N_i$ by $N_i'$ in the remaining
    results of Section \ref{ssec:trunc}; proofs use some intermediate
    results of \cite{PoRy11} (in particular, to prove
    $\frac{r_i}{e_i}\leq v_i'$, we need to use some formul\ae{} in the
    proof of \cite[Proposition 5, page 204]{PoRy11}).
  \end{enumerate}
  We chose to consider the monic case separately, since it makes one
  of the main technical results of this paper (namely tight truncation
  bounds) less difficult to apprehend, thus the paper more progressive
  to read.
\end{rem}



\section{Avoiding univariate factorisation.}
\label{sec:D5}
We proved Theorem \ref{thm:puidV} up to the cost of univariate
factorisations. To conclude the proof, one would additionally need to
prove that the cost of all univariate factorisations computed when
calling Algorithm \arnp{} is in $\Ot(\vRF\,\dy)$. As $\vRF$ can be
small, we would need a univariate factorisation algorithm for a
polynomial in $\Ki[Y]$ of degree at most $d$ with complexity
$\Ot(d)$. Unfortunately, this does not exist. We will solve this point
via Idea \ref{id:D5}; relying on the ``dynamic evaluation'' technique
\cite{DeDiDu85,DaMaScXi05} (also named ``D5 principle'') of Della
Dora, Dicrescenzo and Duval. This provides a way to compute with
algebraic numbers, while avoiding factorisation (replacing it by
\emph{square-free} factorisation). In this context, we will consider
polynomials with coefficients belonging to a direct product of field
extensions of $\Ki$; more precisely to a zero-dimensional \textit{non
  integral} $\Ki$-algebra $\Ki_I=\Ki[\Z]/I$, where $I$ is defined as a
triangular set in $\Ki[\Z]:=\Ki[Z_1,\cdots,Z_s]$. As a consequence,
zero divisors might appear, causing triangular decomposition and
splittings (see Section \ref{ssec:trigsets} for details). Four main
subroutines of the \arnp{} algorithm can lead to a decomposition of
the coefficient ring:
\begin{enumerate}[(i)]
\item \label{enum:NP} computation of Newton polygons,
\item \label{enum:sqrfree} square-free factorisations of characteristic
  polynomials,
\item \label{enum:wpt} subroutine \wpt{}, via the initial gcd
  computation,
\item \label{enum:primelt} computation of primitive elements.
\end{enumerate}
There are two other points that we need to take care of for our main
program:
\begin{enumerate}[(i)]
  \setcounter{enumi}{4}
\item \label{enum:hensel} subroutine \hensel{}, via the initial use of
  \cite[Algorithm 1]{MoSc16};
\item \label{enum:fact0} the initial factorisation of algorithm
  \rnp{} (when computing Puiseux series above \emph{all} critical points).
\end{enumerate}

\begin{rem}\label{rem:dynamic}
  Dynamic evaluation is not the key point of this paper, and has been
  already considered for computing Puiseux series (see
  e.g. \cite{DeDiDu85}). We could have simply said ``split when
  required''. However, keeping quasi-linear algorithms when dealing
  with dynamic evaluation in not an easy task, especially in our
  context where splittings may occur in many various
  subroutines. Hence, we decided to detail all steps and to be precise
  and self-contained about dynamic evaluation in our context. This
  makes this section relatively long and technical, but the reader may
  skip it at a first reading.
\end{rem}

To simplify the comprehension of this section, we will not mention
logarithmic factors in our complexity results, using only the $\Ot$
notation. This section is divided as follows:
\begin{enumerate}
\item We start by recalling a few definitions on triangular sets and
  in particular our notion of \emph{D5 rational Puiseux expansions} in
  Section \ref{ssec:trigsets}.
\item The key point of this section is to deal with these splitting
  with almost linear algorithms; to do so, we mainly rely on
  \cite{DaMaScXi05}. We briefly review in Section \ref{ssec:compD5}
  their results; additionally, we introduce a few algorithms needed
  in our context. In particular, this section details points
  (\ref{enum:primelt}) and (\ref{enum:hensel}) above.
\item Points (\ref{enum:NP}) and (\ref{enum:sqrfree}) above are
  grouped in a unique procedure \NewPol{}, detailed in Section
  \ref{ssec:NewPol}.
\item We provide D5 versions of algorithms \arnp{}, \wrnp{} and
  \rnp{} in Section \ref{ssec:algoD5}.
\item Finally, we prove Theorem \ref{thm:puidV} in Section
  \ref{ssec:proofdV}.
\end{enumerate}

\subsection{Triangular sets and dynamic evaluation.}
\label{ssec:trigsets}
\begin{dfn}
  \label{dfn:trigset}
  A (monic, autoreduced) \emph{triangular set} of
  $\Ki[Z_1,\cdots,Z_s]$ is a set of polynomials $P_1,\cdots,P_s$ such
  that:
  \begin{itemize}
  \item $P_i\in \Ki[Z_1,\cdots,Z_i]$ is monic in $Z_i$,
  \item $P_i$ is reduced modulo $(P_1,\cdots,P_{i-1})$,
  \item the ideal $(P_1,\cdots,P_s)$ of $\Ki[\Z]$ is radical.
  \end{itemize}
  We abusively call an ideal $I\subset \Ki[\Z]$ a triangular set if it
  can be generated by a triangular set $(P_1,\ldots,P_s)$. We denote
  by $\Ki_I$ the quotient ring $\Ki[\Z]/(I)$.
\end{dfn}
Note that this defines a zero-dimensional lexicographic Gr\"obner
basis for the order $Z_1<\cdots{}<Z_s$ with a triangular structure.
Such a product of fields contains zero divisor:
\begin{dfn}
  \label{dfn:regular}
  We say that a non-zero element $\alpha\in\Ki_I$ is \emph{regular} if
  it is not a zero divisor. We say that a polynomial or a
  parametrisation defined over $\Ki_I$ is regular if all its non
  zero coefficients are regular.
\end{dfn}
\paragraph{Triangular decomposition.} Given a zero divisor $\alpha$ of
$\Ki_I$, one can divide $I$ as $I=I_0\cap{}I_1$ with $I_0+I_1=(1)$,
$\alpha\mod I_0=0$ and $\alpha\mod I_1$ is invertible. Moreover, both
ideals $I_0$ and $I_1$ can be represented by triangular sets of
$\Ki[\Z]$.
\begin{dfn}
  \label{dfn:trig-decomp}
  A \emph{triangular decomposition} of an ideal $I$ is
  $I=I_1\cap \cdots \cap I_k$ such that every $I_i$ can be represented
  by a triangular set and $I_i+I_j=(1)$ for $1\leq i\neq j\leq k$.
\end{dfn}
Thanks to the Chinese remainder theorem, the $\Ki$-algebra $\Ki_I$ is
isomorphic to $\Ki_{I_1}\oplus\cdots\oplus\Ki_{I_k}$ for any
triangular decomposition of $I$. We extend this isomorphism
coefficient wise for any polynomial or series defined above $\Ki_I$.
\begin{dfn}
  \label{dfn:splitting}
  Consider any polynomial or series defined above $\Ki_I$. We
  define its \emph{splitting} according to a triangular decomposition
  $I=I_1\cap\cdots\cap I_k$ the application of the above isomorphism
  coefficient-wise.
\end{dfn}
A key point (as far complexity is concerned) is the concept of \emph{non
  critical} triangular decompositions. We recall \cite[Definitions 1.5
and 1.6]{DaMaScXi05}:
\begin{dfn}
  \label{dfn:coprime}
  Two polynomials $a,b \in \Ki_I[X]$ are said \emph{coprime} if the
  ideal $(a,b) \subset \Ki_I[X]$ is equal to $(1)$.
\end{dfn}
\begin{dfn}
  \label{dfn:noncrit}
  Let $(P_1,\cdots,P_s)$ and $(\tilde P_1,\cdots,\tilde P_s)$ be two distinct
  triangular sets. We define the \emph{level} $l$ of this two
  triangular sets to be the least integer such that $P_l\neq \tilde P_l$.  We
  say that these triangular sets are \emph{critical} if $P_l$ and
  $\tilde P_l$ are not coprime in
  $\Ki[Z_1,\cdots,Z_{l-1}]/(P_1,\cdots,P_{l-1})$. A triangular
  decomposition $I=I_1\cap\cdots\cap I_k$ is said \emph{non critical}
  if it has no critical pairs ; otherwise, it is said \emph{critical}.
\end{dfn}
\paragraph{D5 rational Puiseux expansions.}
We conclude this section by defining systems of D5-RPEs over fields
and product of fields. Roughly speaking, a system of D5-RPE over a
perfect field $\Ki$ is a system of RPEs over $\Ki$ grouped together with
respect to some square-free factorisation of the characteristic
polynomials, hence without being necessarily conjugated over $\Ki$.
We have to take care of two main points:
\begin{enumerate}
\item We want correct informations (e.g. regularity indices) before
  fields splittings. To do so, the parametrisations we compute are
  regular (without any zero divisors).
\item We want to recover usual system of RPEs
after fields splittings.
\end{enumerate}
In particular, the computed parametrisations will fit the following
definition:
\begin{dfn}
  \label{dfn:RPED5-0}
  Let $F\in \Ki[X,Y]$ be separable with $\Ki$ a perfect field. A
  system of \emph{D5 rational Puiseux expansions} over $\Ki$ of $F$
  above $0$ is a set $\{R_i\}_i$ such that:
  \begin{itemize}
  \item $R_i\in \Ki_{P_i}((T))^2$ for some square-free polynomial
    $P_i$,
  \item Denoting $P_i=\prod_j P_{ij}$ the univariate factorisation of
    $P_i$ over $\Ki$ and $\{R_{ij}\}_j$ the splitting of $R_i$ according
    to the decomposition $\Ki_{P_i}=\oplus_j \Ki_{P_{ij}}$, then the set
    $\{R_{ij}\}_{i,j}$ is a system of $\Ki$-RPE of $F$ above $0$ (as in
    Definition \ref{dfn:RPE}).
  \end{itemize}
\end{dfn}
In order to deal with \emph{all} critical points in Section
\ref{sec:desing}, we will compute the RPE's of $F$ above a root of a
square-free factor $Q$ of the resultant $R_F$:
\begin{dfn}
  \label{dfn:RPED5-all}
  Let $F\in \Ki_Q[X,Y]$ separable for some $Q\in \Ki[X]$
  square-free. We say that $F$ admits a system of D5-RPE's over
  $\Ki_Q$ above $0$ if there exists parametrisations as in Definition
  \ref{dfn:RPED5-0} that are regular over $\Ki_Q$. Then, a system of
  \emph{D5 rational Puiseux expansions} over $\Ki$ of $F$ above the
  roots of $Q$ is a set $\{Q_i,\Rc_i\}_i$ such that:
  \begin{itemize}
  \item $Q=\prod_i Q_i$,
  \item $\Rc_i$ is a system of D5 RPE's over $\Ki_{Q_i}$ of
    $F(X+z_i,Y)$ above $0$ (in the sense of definition above), where
    $z_i$ is the residue class of $Z$ modulo $Q_i(Z)$.
  \end{itemize}
\end{dfn}


\subsection{Complexity of dynamic evaluation.}
\label{ssec:compD5}
\paragraph{Results of \cite{DaMaScXi05}.}
We start by recalling the main results of \cite{DaMaScXi05}, providing
them only with the $\Ot$ notation (i.e. forgetting logarithmic
factors). In particular, we will take $\M(d)\in\Ot(d)$ in the
following. In our paper, we also assume the number of variables
defining triangular sets to be constant (we usually have $s=2$ in our
context).
\begin{dfn}
  \label{dfn:An}
  An \emph{arithmetic time} is a function $I\mapsto \An(I)$ with real
  positive values and defined over all triangular sets in
  $\Ki[Z_1,\cdots,Z_s]$ such that:
\begin{enumerate}
\item For every triangular decomposition $I=I_1\cap\cdots\cap{}I_h$,
 $\An(I_1)+\cdots+\An(I_h)\leq\An(I)$.
\item Any addition or multiplication in $\Ki_I$ can be made in $\An(I)$
  operations over $\Ki$.
\item Given a triangular decomposition $I=I_1\cap\cdots\cap I_h$, one
  can compute a non-critical triangular decomposition of $I$ that
  refines it in less than $\An(I)$ arithmetic operations. We denote
  \rmCP{} such an algorithm.
\item Given $\alpha\in\Ki_I$ and a non-critical triangular
  decomposition $I=I_1\cap\cdots\cap I_h$, one can compute the
  splitting of $\alpha$ in less than $\An(I)$ operations in $\Ki$. We
  denote \Split{} such an algorithm.
\end{enumerate}
\end{dfn}
\begin{thm}
  \label{thm:An}
  Let $I=(P_1,\cdots,P_s)$ be a triangular set, and denote
  $d_i=\deg_{Z_i}(P_i)$. Assuming $s$ to be constant, one can take
  $\An(I) \in \Ot(d_1\cdots{}d_s)$
\end{thm}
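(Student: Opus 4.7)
The plan is to verify the four conditions of Definition \ref{dfn:An} with $\An(I):=C\cdot d_1\cdots d_s\cdot \plog(d_1\cdots d_s)$ for suitable constants, by leaning on the machinery of \cite{DaMaScXi05} and exploiting the fact that $s$ is treated as a constant.

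First I would address conditions (2) and (1). Since $\Ki_I$ is a free $\Ki$-module of rank $d_1\cdots d_s$ with monomial basis $\{Z_1^{i_1}\cdots Z_s^{i_s}\,:\,0\leq i_j<d_j\}$, addition in $\Ki_I$ is linear in $d_1\cdots d_s$. For multiplication I would lift the operands to $\Ki[Z_1,\ldots,Z_s]$, multiply by iterated Kronecker substitution to reduce to a univariate product of degree $\O(d_1\cdots d_s)$ (using that $s$ is fixed), and then reduce the result modulo $I$ by successive reductions with respect to $P_s,P_{s-1},\ldots,P_1$; each reduction costs $\Ot(d_1\cdots d_s)$ via fast univariate modular reduction applied coefficient-wise, and there are only $s$ of them. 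Condition (1), the superadditivity $\sum_j \An(I_j)\leq \An(I)$ for any triangular decomposition $I=I_1\cap\cdots\cap I_h$, follows from the Chinese Remainder isomorphism $\Ki_I\simeq\bigoplus_j \Ki_{I_j}$: comparing $\Ki$-dimensions yields $d_1\cdots d_s=\sum_j \prod_i d_{i,j}$, where $d_{i,j}:=\deg_{Z_i}(P_{i,j})$, and the inequality is immediate after absorbing the common logarithmic factor into $\plog(d_1\cdots d_s)$.

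Next, I would deal with the algorithmic conditions (3) and (4). For \rmCP{}, given a critical decomposition, I would process it level by level: at the first level $l$ where two components $(P_1,\ldots,P_l)$ and $(\tilde P_1,\ldots,\tilde P_l)$ conflict, compute $\gcd(P_l,\tilde P_l)$ in the quotient algebra at level $l-1$, refining both components accordingly; zero divisors encountered during this GCD produce further splittings of the base, which are tracked recursively. The key ingredient is the dynamic GCD algorithm over a product of triangular quotients from \cite{DaMaScXi05}, which runs within the target complexity because GCDs at level $l$ cost $\Ot(d_1\cdots d_l)$ and the total work telescopes against the additivity of $\An$ already established. For \Split{}, once the decomposition is non-critical, the splitting of a single element $\alpha\in\Ki_I$ reduces to multiple reduction of univariate polynomials against the first-level factors $P_{1,j}$, then iteratively inside each branch against the next-level factors; a subproduct-tree organisation yields total cost $\Ot(d_1\cdots d_s)$, which again matches our target after absorbing $\plog$ factors.

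The hard part will be conditions (3) and (4): the algorithms \rmCP{} and \Split{} are genuinely non-trivial, because each GCD can itself introduce new splittings, and one must ensure that the cumulative work respects the superadditivity rather than blowing up by a factor of $h$. This is precisely where the specific dynamic divide-and-conquer design of \cite{DaMaScXi05} is needed: half-GCD-like routines that propagate triangular decompositions coherently and never recompute work below a level already handled. Conditions (1) and (2) are by comparison routine, and once all four are verified the theorem follows by taking $\An(I)$ as above and observing that $s$ is assumed constant so that constants depending on $s$ may be absorbed into $C$ and into the $\Ot$ notation.
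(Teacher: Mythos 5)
Your proposal ultimately takes the same route as the paper: the paper's proof is a one-line citation of Theorem 8.1 of \cite{DaMaScXi05}, and that theorem is established precisely by verifying the conditions of Definition \ref{dfn:An} along the lines you outline, so nothing is genuinely different — only more spelled out, with the hard parts (\rmCP{} and \Split{}) still deferred to the same reference.

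One small inaccuracy is worth flagging on your condition (2) argument. The phrase ``each reduction costs $\Ot(d_1\cdots d_s)$ via fast univariate modular reduction applied coefficient-wise, and there are only $s$ of them'' is not quite right. Reduction modulo $P_i$ for $i\ge 2$ is not a univariate reduction applied coefficient-wise: either you carry out the division over the quotient ring $\Ki[Z_1,\ldots,Z_{i-1}]/(P_1,\ldots,P_{i-1})$, in which case each step of the division (or Newton iteration for the reciprocal) recursively invokes multiplication and reduction at level $i-1$; or you treat the lower variables as formal parameters, in which case the degrees in $Z_1,\ldots,Z_{i-1}$ grow during the division and must be re-reduced afterwards. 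Either way, one multiplication in $\Ki_I$ costs $C^s\,\Ot(d_1\cdots d_s)$ for some constant $C>1$, not $s\cdot\Ot(d_1\cdots d_s)$. Since $s$ is held constant, the conclusion you draw is still correct, so this is not a fatal gap, but the argument as phrased would fail for growing $s$, and it undersells the fact that even condition (2) already depends on the careful bookkeeping in \cite{DaMaScXi05}.
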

\begin{proof}
  This is a special case of the main result of \cite{DaMaScXi05},
  namely Theorem 8.1 therein.
\end{proof}
\begin{prop}
  \label{prop:gcdD5}
  Let $I=(P_1,\cdots,P_s)$, and $A$, $B\in\Ki_I[Y]$ with degrees in $Y$
  less than $d$. Assuming $s$ constant, one can compute the extended
  greatest common divisor of $A$ and $B$ in less than
  $\Ot(d\cdot{}d_1\cdots d_s)$ operations over $\Ki$.
\end{prop}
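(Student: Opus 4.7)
The plan is to adapt the classical half-GCD algorithm, which computes the extended GCD of two univariate polynomials of degree at most $d$ over a field in $\Ot(d)$ operations, to the setting where the coefficient ring $\Ki_I$ is only a product of fields. Each arithmetic operation in $\Ki_I$ costs $\An(I)$ operations over $\Ki$, and Theorem \ref{thm:An} gives $\An(I)\in\Ot(d_1\cdots d_s)$. If no obstruction occurred, we would obtain the claim directly by multiplying the $\Ot(d)$ operation count of half-GCD by $\An(I)$.

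The core difficulty is that $\Ki_I$ is not integral, so whenever the half-GCD procedure tries to perform a division (identifying the degree of a remainder, inverting a leading coefficient, or normalizing a $2\times 2$ transition matrix), the relevant leading coefficient may be a zero divisor. The remedy is the D5 principle: before any inversion, test whether $\alpha\in\Ki_I$ is regular by computing $\gcd(\alpha,P_s)$ as an element of the triangular structure. If $\alpha$ is invertible, proceed as usual; if $\alpha$ is a zero divisor, use \rmCP{} to refine $I$ into a non-critical triangular decomposition $I=I_1\cap\cdots\cap I_h$ on which $\alpha$ is either zero or a unit on every component, use \Split{} to split $A$ and $B$ accordingly, and recurse on each $\Ki_{I_j}[Y]$. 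The output is then the disjoint union of the resulting extended GCDs.

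The complexity analysis rests on property (1) of an arithmetic time: for any triangular decomposition of $I$, $\An(I_1)+\cdots+\An(I_h)\leq \An(I)$. Consequently, the cost of the work performed on the various pieces after a split sums to at most the cost of the same step performed monolithically on $\Ki_I$, so splittings do not degrade the asymptotic bound. Each of the $\Ot(d)$ arithmetic operations performed by half-GCD then costs at most $\An(I)$ operations in $\Ki$, yielding a total cost of $\Ot(d)\cdot\An(I)=\Ot(d\cdot d_1\cdots d_s)$, as desired.

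The main obstacle is verifying that every step of the fast GCD pipeline (polynomial multiplication, division with remainder, the recursive half-GCD recombination, and the regularity tests themselves) can indeed be implemented under the D5 principle without creating uncontrolled numbers of critical components; this is precisely where the use of \rmCP{} after each split matters, and where the assumption that $s$ is constant enters, so that the constants hidden in $\Ot(\cdot)$ remain absolute. Once this bookkeeping is in place, the result follows, and it specialises to the classical $\Ot(d)$ bound when $I=(0)$, i.e.\ when $\Ki_I=\Ki$.
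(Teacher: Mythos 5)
The paper's proof of this proposition is a single citation to \cite[Proposition 4.1]{DaMaScXi05}; it does not re-derive the result. Your proposal is essentially a high-level sketch of the strategy taken in that reference (adapt fast extended GCD/half-GCD to the D5 setting, test regularity before each inversion, split via \rmCP{} and \Split{}, and control the total cost with the subadditivity property of the arithmetic time $\An$). That is the right framework, and your observation that property (1) of Definition~\ref{dfn:An} bounds the aggregate cost across all components is the correct high-level reason the bound survives splittings.

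However, your final paragraph correctly identifies a genuine gap and then waves it away. The hard content of \cite{DaMaScXi05} is precisely what you call ``bookkeeping'': in the half-GCD recursion, different components produced by a split can have \emph{different remainder-degree sequences}, so the recursive structure of the algorithm itself diverges across components, not just the scalar arithmetic. One must show that the recursion tree of half-GCD and the splitting tree can be interleaved so that the total work still telescopes into $\Ot(d)\cdot\An(I)$; that the number of distinct components stays controlled relative to $d_1\cdots d_s$; and that each regularity test and call to \rmCP{}/\Split{} fits inside the same budget. None of this is immediate from the statement ``splittings do not degrade the asymptotic bound,'' since subadditivity of $\An$ only bounds the cost of \emph{one} operation replayed on all pieces, not the cost of running structurally divergent algorithms on each piece. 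Your sketch is therefore not wrong, but it is incomplete at exactly the step that constitutes the substance of the cited result; as a blind re-derivation it would need to actually carry out that bookkeeping rather than assert it can be done.
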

\begin{proof}
  This is \cite[Proposition 4.1]{DaMaScXi05}.
\end{proof}
\paragraph{Splitting all coefficients of a polynomial.}  In the
remaining of this section, we focus on the case $s=2$, denoting $I=(Q,P)$,
$d_Q=\deg_{Z_1}(Q)$, $d_P=\deg_{Z_2}(P)$ and $d_I=d_Q\,d_P$.
\begin{lem}
  \label{lem:splitH}
  There exists an algorithm \ReducePol{} that, given $H\in\Ki_I[X,Y]$,
  returns a collection $\{(I_k,H_k)_k\}$ such that $I=\cap_k I_k$ is a
  non critical triangular decomposition and the polynomials
  $H_k=H\mod I_k $ are regular over $I_k$. This algorithm performs at
  most $\Ot(\deg_X(H)\,\deg_Y(H)\,d_I)$ operations over $\Ki$.
\end{lem}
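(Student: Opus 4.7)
The plan is to process the coefficients of $H$ one at a time and, for each coefficient, either certify that it is regular in the current components or force a splitting that kills the zero divisor. More precisely, write $H = \sum_{i,j} h_{ij} X^i Y^j$ with $h_{ij} \in \Ki_I$. I would maintain an evolving non-critical triangular decomposition $\Cc = \{I_k\}$ of $I$ together with the corresponding reductions $H^{(k)} := H \bmod I_k$. Initially $\Cc = \{I\}$ (already a triangular set, so trivially non-critical). We iterate over the $\deg_X(H) \deg_Y(H)$ pairs $(i,j)$: for each component $I_k \in \Cc$, if $h^{(k)}_{ij}$ is nonzero we test whether it is regular in $\Ki_{I_k}$; if it is a zero divisor, we split $I_k$ into two triangular ideals $I_{k,0},I_{k,1}$, replace $I_k$ by this pair in $\Cc$, and finally run \rmCP{} to restore non-criticality.

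To test regularity of an element $\alpha \in \Ki_{I_k}$ with $I_k = (Q_k,P_k)$, I view $\alpha$ as a polynomial in $Z_2$ with coefficients in $\Ki[Z_1]/Q_k$ and compute its extended gcd with $P_k$ via the algorithm of Proposition \ref{prop:gcdD5}; this call itself can trigger intermediate splittings of $Q_k$ or factorisations of $P_k$, which is exactly what we want: on each leaf the gcd is either $1$ (so $\alpha$ is invertible there) or a nontrivial factor $g$ of $P_k$, in which case $P_k = g \cdot (P_k/g)$ produces the desired splitting into $I_{k,0} = (Q_k,g)$, where $\alpha \equiv 0$, and $I_{k,1} = (Q_k,P_k/g)$, where $\alpha$ is invertible. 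After this step, $h^{(k)}_{ij}$ becomes either $0$ or a regular element in each of the new components.

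The correctness is then clear: at the end of the loop every surviving coefficient $h^{(k)}_{ij}$ is either zero or regular, so the output polynomials $H_k$ are regular in the sense of Definition \ref{dfn:regular}, and by construction $\Cc$ is a non-critical triangular decomposition of $I$. For the complexity, the key observation is the subadditivity axiom (1) of Definition \ref{dfn:An}: at any moment, $\sum_{I_k \in \Cc} \An(I_k) \le \An(I) = \Ot(d_I)$ by Theorem \ref{thm:An}. Therefore, processing a single coefficient across all current components costs
\[
\sum_k \Ot(\An(I_k)) \;=\; \Ot(\An(I)) \;=\; \Ot(d_I),
\]
using Proposition \ref{prop:gcdD5} for the gcd, Definition \ref{dfn:An} for the \rmCP{} and \Split{} calls, and the fact that the degrees in $Z_2$ involved are bounded by $d_P$. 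Summing over the at most $\deg_X(H)\deg_Y(H)$ coefficients yields the claimed bound $\Ot(\deg_X(H)\deg_Y(H)\,d_I)$.

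The main obstacle is not the per-coefficient gcd (which is directly supplied by Proposition \ref{prop:gcdD5}) but rather keeping the accounting clean: the number of components in $\Cc$ can in principle grow linearly with the number of zero-divisor encounters, and a naive analysis would multiply the cost by this count. The subadditivity of $\An$ on non-critical decompositions, together with a disciplined call to \rmCP{} after each split so that $\Cc$ is always non-critical when we start processing the next coefficient, is exactly what prevents this blow-up and allows the total cost to be amortised against a single $\Ot(d_I)$ budget per coefficient.
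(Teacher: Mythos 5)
Your proposal follows essentially the same route as the paper's proof (which in turn models itself on \cite[Algorithm monic]{DaMaScXi05}): process the coefficients of $H$ one at a time, test regularity of each coefficient by an extended gcd against the top-level polynomial $P_k$ via Proposition~\ref{prop:gcdD5}, refine the decomposition, run \rmCP{} to restore non-criticality, and amortize the per-coefficient cost to $\Ot(d_I)$ using the subadditivity axiom of Definition~\ref{dfn:An} together with Theorem~\ref{thm:An}. Both the correctness argument and the complexity accounting are the intended ones.

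One bookkeeping point deserves care. You propose to maintain the full reductions $H^{(k)} := H \bmod I_k$ alongside the evolving decomposition $\Cc$. Taken literally, each time a component $I_k$ is split you would have to re-split $H^{(k)}$, at a cost of $\Ot(\deg_X(H)\deg_Y(H)\,\An(I_k))$ per split. Over a path-like sequence of splittings (peeling off a degree-one factor each time), the maintenance cost alone sums to $\Ot(\deg_X(H)\deg_Y(H)\,d_I^{2})$, which overshoots the target. The paper avoids this by reducing only the single coefficient $h_{ij}$ under consideration to the decomposition found so far — an $\Ot(d_I)$ operation per coefficient thanks to non-criticality — and then splitting the whole polynomial $H$ once, at the very end, in a single $\Ot(\deg_X(H)\deg_Y(H)\,d_I)$ pass. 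Your amortization argument closes cleanly under this convention; just be explicit that the full $H$ is not re-split inside the loop.
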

\begin{proof}
  As for \cite[Algorithm monic]{DaMaScXi05}, for each
  coefficient of $H$, we split it according to the decomposition of
  $I$ found so far. For each reduced coefficient we get, we test its
  regularity using gcd computation. This gives us a new (possibly
  critical) decomposition of $I$. We run Algorithm \rmCP{} on it. At
  the end, we split $H$ according to the found decomposition.
  Complexity follows from Theorem \ref{thm:An} and Proposition
  \ref{prop:gcdD5}.
\end{proof}
\paragraph{Square-free decomposition above $\Ki_I$.} We say that a
monic polynomial $\phi\in \Ki_I[Y]$ is square-free if the ideal
$I+(\phi)$ is radical. $\phi=\prod_i \phi_i^{n_i}$ is the square-free
factorisation of $\phi$ over $\Ki_I$ if the $\phi_i$ are coprime
square-free polynomials in $\Ki_I[Y]$ and $n_i < n_{i+1}$ for all $i$.
\begin{prop}
  \label{prop:sqrfree}
  Consider $\Ki$ a perfect field with characteristic $p$ and $\phi\in\Ki_I[Y]$
  a monic polynomial of degree $d$. Assuming $p=0$ or $p>d$, there
  exists an algorithm \sqrfree{} that computes a set
  $\{(I_k,(\phi_{k,l},M_{k,l})_l)_k\}$ such that $I=\cap_k I_k$ is a
  non critical triangular decomposition and
  $\phi_k=\prod_l \phi_{k,l}^{M_{k,l}}$ is the square-free
  factorisation of $\phi_k:=\phi\mod I_k$.  It takes less than
  $\Ot(d\,d_I)$ operations over $\Ki$.
\end{prop}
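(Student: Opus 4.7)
The plan is to adapt Yun's classical square-free factorisation algorithm to the dynamic evaluation setting, using Proposition \ref{prop:gcdD5} as the main primitive and invoking \rmCP{} and \ReducePol{} to keep the decomposition non-critical. Yun's algorithm only requires division and extended gcd in the base ring: start from $g_0=\gcd(\phi,\phi')$, set $a_1=\phi/g_0$, $b_1=\phi'/g_0$, then iterate $c_i=b_i-a_i'$, $g_i=\gcd(a_i,c_i)$, $a_{i+1}=a_i/g_i$, $b_{i+1}=c_i/g_i$. The assumption $p=0$ or $p>d$ guarantees that $\phi'\neq 0$ whenever $\phi$ has a repeated factor, and that the output $\{g_i\}$ is indeed the square-free factorisation.

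Over $\Ki_I$, each extended gcd call can encounter zero divisors, but Proposition \ref{prop:gcdD5} already handles this: the algorithm outputs a triangular decomposition of $I$ together with the gcd on each component. First I would run the gcd of $\phi$ and $\phi'$, obtaining a collection $\{(I_k, g_k)\}_k$ with $I=\cap_k I_k$. On each $\Ki_{I_k}$, I continue Yun's iteration independently; each gcd and each division may again refine the decomposition. At the end, I call \rmCP{} to guarantee that the final decomposition is non-critical, and \ReducePol{} on each $\phi_{k,l}$ so that all factors are regular over their respective components. The square-free factorisations gathered over each leaf then satisfy the specification.

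For complexity, at each step of Yun's recursion, the polynomials $a_i$, $b_i$, $c_i$, $g_i$ have degrees summing telescopically to $O(d)$: more precisely, $\deg(g_i)+\deg(a_{i+1})=\deg(a_i)$, so the total work across iterations is proportional to $d$. On a single triangular component $I'$, Proposition \ref{prop:gcdD5} gives a cost of $\Ot(\deg\cdot d_{I'})$ per gcd, and fast division is similar. Summing the bounds on any non-critical decomposition $I=\cap_k I_k'$ produced along the way uses the additivity property $\sum_k \An(I_k')\leq \An(I)$ of Definition \ref{dfn:An}, together with Theorem \ref{thm:An} giving $\An(I)\in\Ot(d_I)$. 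Thus the total cost across all components and all iterations fits in $\Ot(d\,d_I)$, and the calls to \rmCP{} and \ReducePol{} add only $\Ot(d\,d_I)$ by Theorem \ref{thm:An} and Lemma \ref{lem:splitH}.

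The main obstacle is bookkeeping rather than a new algorithmic idea: one has to verify that the splittings triggered at different iteration levels combine into a single non-critical decomposition of $I$ without double-counting, and that the additivity of $\An$ is applied to the \emph{same} underlying decomposition throughout the telescoped cost analysis. A clean way is to maintain a working list of pairs $(I_k,\phi_k,\phi'_k)$, process each pair by one Yun step producing refined pairs, and appeal to additivity level by level; since the degree strictly decreases, at most $O(\log d)$ refinements per leaf occur, and the total cost remains $\Ot(d\,d_I)$.
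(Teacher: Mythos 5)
Your proposal is correct and takes essentially the same route as the paper: run Yun's algorithm over $\Ki_I$, replace each (extended) gcd by Proposition \ref{prop:gcdD5}, insert splitting steps (\rmCP{}) between iterations, and control the total cost via the telescoping degree bound together with the additivity of $\An$ (Definition \ref{dfn:An}) and Theorem \ref{thm:An}. One minor slip in your final paragraph: the claim that ``at most $O(\log d)$ refinements per leaf occur since the degree strictly decreases'' is not justified (Yun's iteration count equals the largest multiplicity and can be as large as $d$), but it is also unnecessary, since the bound you already obtained from $\sum_i\bigl(\deg(g_i)+\deg(a_{i+1})\bigr)\in O(d)$ together with additivity of $\An$ is what carries the complexity, exactly as in the paper's count of fewer than $d$ calls to \rmCP{}.
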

\begin{proof}
  We compute successive gcds and euclidean divisions,
  using Yun's algorithm \cite[Algorithm 14.21, page 395]{GaGe13} (this
  result is in characteristic 0, but works in positive characteristic
  when $p>d$). Each gcd computation is Proposition \ref{prop:gcdD5}.
  We just need to add splitting steps (if needed) 
  in between two calls. The complexity follows by using Proposition
  \ref{prop:gcdD5} in the proof of \cite[Theorem 14.23, page
  396]{GaGe13}, since there are less than $d$ calls to the algorithm
  \rmCP{}.
\end{proof}
\paragraph{Keeping a constant number of variables.} We extend the result
of Proposition \ref{prop:primeltKi} above $\Ki_Q$ for some square-free
polynomial $Q$. This requires additional attention on splittings.
\begin{prop}
  \label{prop:primitive}
  Let $\phi\in\Ki_I[Z_3]$ square-free, $d=d_P\,\deg_{Z_3}(\phi)$. If
  $\Ki$ contains at least $d^2$ elements, there exists a Las-Vegas
  algorithm that computes $(Q_k,P'_k,\psi_k)_k$ satisfying:
  \begin{itemize}
  \item $Q=\prod_k Q_k$,
  \item $P_k'$ is a squarefree polynomial of degree $d$ over
    $\Ki_{Q_k}$,
  \item $\psi_k:\Ki_{I_k}\to \Ki_{I'_k}$ is an isomorphism, where
    $I_k=(Q_k,P,\phi)$ and $I'_k=(Q_k,P'_k)$.
  \end{itemize}
  We call \PrimEltB{} such an algorithm. It takes
  $\Ot(d^{\frac{\omega+1}2}\,d_Q)$ operations over $\Ki$. Given
  $H\in\Ki_{I_k}[X,Y]$, one can compute $\psi_k(H)$ in less than
  $\Ot(\deg_X(H)\,\deg_Y(H)\,d_P\,d\,d_{Q_k})$.
\end{prop}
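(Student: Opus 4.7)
The plan is to mimic the proof of Proposition \ref{prop:primeltKi}, replacing ``compute the minimal polynomial of a random linear combination'' by its dynamic-evaluation counterpart over the product ring $\Ki_Q$, so that every zero-divisor obstruction becomes a splitting of $Q$ rather than a failure. First I would pick a random $\lambda \in \Ki$ and set $W := Z_2 + \lambda Z_3$. Over a field extension, a random such $W$ is primitive in $\Ki_I/(\phi)$ with probability at least $1 - \binom{d}{2}/|\Ki|$; since $|\Ki| \geq d^2$, the expected number of trials is $O(1)$, which yields the Las-Vegas nature of the algorithm.

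Next, I would compute the characteristic polynomial $\chi \in \Ki_Q[W]$ of multiplication by $W$ on the free $\Ki_Q$-module $\Ki_I[Z_3]/(\phi)$ of rank $d$, using the fast algorithm underlying Proposition \ref{prop:primeltKi} (see \cite[Section 2.2]{PoSc13b}). Working over $\Ki_Q$ rather than over a field, every inversion or zero-divisor test inside the linear-algebra subroutine is intercepted by \rmCP{} and \Split{} from Definition \ref{dfn:An}; this dynamically refines $Q = \prod_k Q_k$ into a non-critical triangular decomposition. By Theorem \ref{thm:An}, the total overhead due to splittings stays additive in $\An(Q) \in \Ot(d_Q)$, so the cost over all components adds up to $\Ot(d^{(\omega+1)/2} d_Q)$ arithmetic operations in $\Ki$.

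Then I would apply \sqrfree{} to $\chi$; this refines $Q = \prod_k Q_k$ further and returns, on each component, the squarefree factorization of $\chi_k := \chi \bmod Q_k$, at cost $\Ot(d\, d_Q)$ by Proposition \ref{prop:sqrfree}. On components where $\chi_k$ is squarefree of degree $d$, the element $W$ is a primitive element of $\Ki_{I_k}$ over $\Ki_{Q_k}$ and I set $P'_k := \chi_k$. If on some component $\chi_k$ is not squarefree, one restarts with a fresh $\lambda$; the expected number of restarts is $O(1)$ by the probability estimate above. Once every $P'_k$ is available, the isomorphism $\psi_k$ is determined by the images of $Z_2$ and $Z_3$ in $\Ki_{I'_k} = \Ki_{Q_k}[W]/(P'_k)$, obtained by expressing $Z_2$ and $Z_3$ in the basis $1, W, \ldots, W^{d-1}$ via a $d \times d$ linear system over $\Ki_{Q_k}$; this fits in the announced budget thanks to additivity of $\An$ along the decomposition.

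For the cost of applying $\psi_k$ to a polynomial $H \in \Ki_{I_k}[X, Y]$, I would treat $H$ coefficient by coefficient: each coefficient is an element of $\Ki_{Q_k}[Z_2, Z_3]/(P, \phi)$, of $\Ki_{Q_k}$-dimension $d = d_P \deg_{Z_3}(\phi)$, and its image is obtained by a single multiplication in $\Ki_{I'_k}$ after precomputing the images of the $\Ki_{Q_k}$-basis vectors. Using Kronecker substitution, the per-coefficient cost is $\Ot(d_P\, d\, d_{Q_k})$, and summing over the $\deg_X(H) \deg_Y(H)$ coefficients yields the announced bound. The main delicate point is bookkeeping: one must guarantee that \emph{every} subroutine (characteristic polynomial, squarefree factorization, linear solve) is run in a mode that respects, and if necessary refines, the current non-critical decomposition of $Q$; the guarantee that this stays within the stated complexity is precisely Theorem \ref{thm:An} together with Proposition \ref{prop:gcdD5}.
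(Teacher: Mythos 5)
Your overall plan matches the paper's: a random linear form $W = Z_2 + \lambda Z_3$, a fast (trace/power-projection \`a la Shoup) computation of its minimal polynomial over $\Ki_Q$, a square-freeness test that triggers restarts or splittings, and then an explicit isomorphism $\psi_k$, all run in a dynamic-evaluation mode where zero-divisors trigger \rmCP{} and \Split{}. Two places, however, fall short of the stated complexity.

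First, computing $\psi_k$ by ``expressing $Z_2$ and $Z_3$ in the basis $1,W,\ldots,W^{d-1}$ via a $d\times d$ linear system over $\Ki_{Q_k}$'' is too expensive. A generic $d\times d$ solve costs $\Ot(d^{\omega} d_{Q_k})$ operations in $\Ki$, which dominates the claimed $\Ot(d^{(\omega+1)/2} d_Q)$ (with $\omega<2.373$, $d^{\omega}\approx d^{2.373}$ against $d^{1.687}$; and even with $\omega=3$ the gap remains). Additivity of $\An$ along the decomposition handles the cost of \emph{splitting}, not the cost of the solve on each component. The paper's proof avoids this by obtaining the coordinates of $Z_2$, $Z_3$ from $d$ extra trace values and then solving the resulting \emph{Hankel} system via the reduction to extended gcd of \cite{BrGuYu80}, which keeps this step at $\Ot(d\,d_Q)$ per component and lets Proposition \ref{prop:gcdD5} absorb the gcd splittings. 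Without exploiting the structured linear algebra, the target bound is not met.

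Second, your description of applying $\psi_k$ to a coefficient of $H$ — ``obtained by a single multiplication in $\Ki_{I'_k}$ after precomputing the images of the $\Ki_{Q_k}$-basis vectors'' — does not fit the stated cost. The image of a coefficient is a $\Ki_{Q_k}$-linear combination of $d$ precomputed elements of $\Ki_{I'_k}$, each of $\Ki_{Q_k}$-dimension $d$; that is $\Ot(d^2 d_{Q_k})$ operations per coefficient, a factor $d/d_P=\deg_{Z_3}(\phi)$ too large, and it is not a single product in $\Ki_{I'_k}$. Invoking Kronecker substitution does not change this count: Kronecker accelerates a multiplication, not a $d$-term linear combination of length-$d$ vectors. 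The paper achieves $\Ot(d_P\,d\,d_{Q_k})$ per coefficient via a specific Horner-type rewriting scheme (\cite[Section 5.1.3, page 209]{PoRy11}); you would need to reproduce that scheme rather than use the precompute-and-combine approach.
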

\begin{proof}
  We follow the Las Vegas algorithm\footnote{here the assumption on
    the number of elements of $\Ki$ is used} given in \cite[Section
  2.2]{PoSc13b}. First, trace computation of the monomial basis takes
  $\O(\M(d\,d_Q))$ operations in $\Ki$ (it is reduced to polynomial
  multiplication thanks to \cite[Proposition 8]{PaSc06}). Then,
  picking a random element $A$, we compute the $2\,d$ traces of powers
  of $A$ by power projection. Methods based on \cite{Sh94} involve
  only polynomial, transposed polynomial and matrix multiplications,
  for a total in $\O(d^{\frac{\omega+1}2}\M(d_Q))$ operations in
  $\Ki$. Finally, our candidate for $P'$ can be deduced via Newton's
  method in $\O(\M(d\,d_Q))$ operations. It remains to test its
  square-freeness, involving gcd over $\Ki_Q$. It takes less than
  $\Ot(d\,d_Q)$ operations over $\Ki$ from Proposition
  \ref{prop:gcdD5}. If a factorisation of $Q$ appears, we run some
  splittings and Theorem \ref{thm:An} concludes.

  To compute $\psi_k$, we first need $d$ additional traces; this is
  once again power projection. Then, one solves a linear system
  defined by a Hankel matrix (see \cite[Proof of Theorem
  5]{Sh94}). This can be done using the algorithm described in
  \cite{BrGuYu80}, that reduces the problem to extended gcd
  computation, thus involves potential decomposition of $Q$. This is
  once again $\Ot(d\,d_Q)$ operations over $\Ki$ (using \rmCP{} if
  needed).

  To conclude, using e.g. Horner's scheme \cite[Section 5.1.3, page
  209]{PoRy11}, rewriting the coefficients of $H\in\Ki_{I_k}[X,Y]$ can
  be done in $\Ot(\deg_X(H)\,\deg_Y(H)\,d_P\,d\,d_{Q_k})$.
\end{proof}
\begin{rem}
  \label{rem:2vars}
  Algorithm \PrimEltB{} keeps the number of
  variables constant (at most two) for the triangular sets we are using
  during the whole algorithm. We do not work with univariate
  triangular sets for two reasons:
  \begin{enumerate}
  \item Computing such triangular set (starting from a bivariate one)
    would lead to a bound in $d_Q^{\frac{\omega+1}2}$, that can
    be $\dt^{\omega+1}$ when the factor $Q$ of the resultant has high
    degree (see Section \ref{sec:desing}). As $\omega>2$, this is too much.
  \item $Q$ (factor of the resultant) and $P$ (residual extension) do
    not provide the same geometrical information.
  \end{enumerate}
\end{rem}
\paragraph{Extending \wpt{} and \hensel{} to the D5
  context.} We conclude this section by providing trivial extension of
the Hensel algorithms: we only need to pay attention to the initial
gcd-computation (for \wpt) or its generalised version of \cite{MoSc16}
(for \hensel).
\begin{prop}
  \label{prop:wpt-D5}
  Let $G\in \Ki_I[X,Y]$ and $n\in\Ni$. There exist an algorithm that
  computes a set $(I_k, \tronc{\Gh_k}{n})$ such that $I=\cap_k I_k$ is
  a non critical decomposition of $I$ and $\Gh_k$ the Weierstrass
  polynomial of $G\mod I_k$. It takes less than
  $\O(\M(n\,\deg_Y(G)\,d_I))$ operations in $\Ki$. We still denote
  \wpt{} such an algorithm.
\end{prop}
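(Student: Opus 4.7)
The plan is to reduce to the non-D5 algorithm of Proposition \ref{prop:wptalgo} by first performing a decomposition of $I$ that makes the Weierstrass degree of $G$ unambiguously defined on each piece. As hinted in the paragraph preceding the statement, the only part of the classical Weierstrass algorithm (Mu75's Algorithm Q) that can fail in the D5 setting is its initial step: one must identify $d := \val[Y](G(0,Y))$, i.e.\ find the smallest index $i$ such that $g_i(0)$ is non-zero and invertible, where $G = \sum_i g_i(X)\,Y^i$. Over the non-integral algebra $\Ki_I$, a coefficient $g_i(0)$ may be a non-zero zero divisor, which obstructs this test and is precisely the source of the required splittings. This is the main (and really the only) obstacle.

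To resolve it, I would first call \ReducePol{} from Lemma \ref{lem:splitH} on the univariate polynomial $G(0,Y)\in\Ki_I[Y]$, producing a non-critical triangular decomposition $I=\cap_k I_k$ along with reductions whose non-zero coefficients are regular over $\Ki_{I_k}$. Since each $\Ki_{I_k}$ is a product of fields ($I_k$ being radical), any such regular element is in fact invertible. This preliminary reduction costs $\Ot(\deg_Y(G)\,d_I)$ operations over $\Ki$, which is absorbed by the target bound.

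For each piece $I_k$, the Weierstrass degree $d_k := \val[Y](G_k(0,Y))$ with $G_k := G \mod I_k$ is now well-defined and the corresponding leading coefficient is invertible in $\Ki_{I_k}$. I would then apply the classical \wpt{} algorithm of Proposition \ref{prop:wptalgo} verbatim over $\Ki_{I_k}$, with no further splitting: its inner Newton-like lifting only involves ring operations in $\Ki_{I_k}[[X]]/(X^n)$, which are safe. Combining this with Kronecker substitution exactly as in the proof of Proposition \ref{prop:wptalgo} (now with $\Ki_{I_k}$ of $\Ki$-dimension $d_{I_k}$ playing the role of $\Ki_P$) yields $\tronc{\Gh_k}{n}$ in $\O(\M(n\,\deg_Y(G)\,d_{I_k}))$ arithmetic operations.

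Summing over $k$ and using $\sum_k d_{I_k} = d_I$ together with the super-additivity $\M(a+b)\ge\M(a)+\M(b)$ from Lemma \ref{lem:M}, we obtain $\sum_k\M(n\,\deg_Y(G)\,d_{I_k}) \le \M(n\,\deg_Y(G)\,d_I)$, matching the claimed bound. Thus the only non-routine step is the preliminary call to \ReducePol{}; once the Weierstrass degree is pinned down over each piece, the rest is a black-box use of the non-D5 algorithm, which is why the authors rightly describe this extension as a \emph{trivial} one.
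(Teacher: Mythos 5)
Your preliminary call to \ReducePol{} on $G(0,Y)$ correctly pins down the Weierstrass degree $d_k$ over each piece, and you are right that the Hensel lifting iterations themselves are split-free (they only require divisions by a monic partial factor). But the claim that applying the non-D5 algorithm of Proposition \ref{prop:wptalgo} \emph{verbatim} over $\Ki_{I_k}$ produces ``no further splitting'' has a genuine gap: the classical algorithm begins by computing the B\'ezout relation between $Y^{d_k}$ and $U_0 := Y^{-d_k} G_k(0,Y)$ via the extended Euclidean algorithm, and this step is not split-free even when $U_0$ is regular. The intermediate remainders in the Euclidean algorithm over a product of fields can have zero-divisor leading coefficients, because the degree sequences over the different factor fields need not coincide. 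For instance, over $\Fi_7\times\Fi_7$ with $d_k=3$ and $U_0 = Y^3 + Y^2 + Y + (1,2)$ (all nonzero coefficients are units, hence regular), the second Euclidean remainder of $(Y^3, U_0)$ equals $(0,6)\,Y + (1,2)$, whose leading coefficient $(0,6)$ is a zero divisor — the degree sequence is $3,2,0$ on the first factor but $3,2,1,0$ on the second, so a split is unavoidable. This is precisely why the paper's proof explicitly invokes the D5 extended gcd (Proposition \ref{prop:gcdD5}), lets the XGCD step output a possibly nontrivial decomposition $I_i=\cap_j I_{ij}$, and then refines via \rmCP{} before running the split-free Hensel iterations. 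Your argument could be repaired by observing that this particular B\'ezout relation can instead be computed by power-series inversion of $U_0$ modulo $Y^{d_k}$ (Newton iteration starting from the safely invertible constant $U_0(0)^{-1}$), which involves only ring operations — but that is a deliberate deviation from the classical algorithm, not a black-box application of it, and it needs to be stated.
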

\begin{proof}
  First run \ReducePol{} if needed (it is not in our context), getting
  a set $(I_i,G_i')$. Than, for each $i$, use extended Euclidean
  algorithm with parameters $(Y^{M_i},Y^{-M_i}\,G_i(0,Y))$ with
  $M_i=\val[Y](G_i(0,Y)$, getting a decomposition $I_i=\cap_j I_{ij}$
  and associated B\'ezout relations. Compute a non triangular
  decomposition $I=\cap_k I_k$ that refines $\cap_i\cap_j I_{ij}$,
  and reduce $G$ and the B\'ezout relations accordingly. Finally, run
  the Hensel lemma (that does not generate any splitting) on each
  $G_k$, using the associated B\'ezout relation. Complexity follows
  from Lemma \ref{lem:splitH}, Proposition \ref{prop:gcdD5}, Theorem
  \ref{thm:An} and Proposition \ref{prop:wptalgo}.
\end{proof}

\begin{lem}
  \label{lem:xgcdk-D5}
  Given $G$, $H\in\Ki_I[X,Y]$ of degrees in $Y$ bounded by $d$, one
  can compute a set $(I_k,G_k,H_k,U_k,V_k,\eta_k)_k$ such that
  $I=\cap_k I_k$ is a non critical decomposition of $I$,
  $G_k=G\mod I_k$, $H_k=H\mod I_k$ and
  $U_k\cdot{}G_k+V_k\cdot{}H_k=X^{\eta_k}\mod X^{\eta_k+1}$ with
  $\eta_k$ the lifting order of $(G_k,H_k)$.  This takes
  $\Ot(d\,d_I\,\max_k\eta_k)$ operations over $\Ki$.
\end{lem}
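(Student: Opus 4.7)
The plan is to adapt \cite[Algorithm 1]{MoSc16} to the D5 context, splitting the base ring whenever a zero divisor appears during the successive extended gcd computations that algorithm relies on. I would proceed in three steps.

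First, I would run \ReducePol{} on both $G$ and $H$ to obtain a non-critical triangular decomposition $I=\cap_k I_k$ over which both $G_k := G \bmod I_k$ and $H_k := H \bmod I_k$ become regular. By Lemma \ref{lem:splitH} this costs $\Ot(d\,d_I\,\max_k\eta_k)$ (the factor in $\deg_X$ being absorbed in $\max_k\eta_k$ since coefficients beyond that degree play no role in the final relation), which fits the target bound. Keeping the coefficients regular is important so that the first steps of the Moroz--Schost procedure, which read off the valuation of the constant coefficients in $Y$, give meaningful data and do not mistake a zero divisor for a non-zero or zero element.

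Second, on each component, I would run the iterative scheme of \cite[Algorithm 1]{MoSc16}. It reduces to a bounded number of extended gcd computations in $\Ki_{I_k}[Y]$, each delegated to the D5 gcd algorithm underlying Proposition \ref{prop:gcdD5}. Any such call may return a further splitting of $I_k$ when a zero divisor is detected; I would then refine the decomposition with \rmCP{} so that it remains non-critical, split the current data $G_k,H_k$ and the intermediate relation along the new components, and continue independently on each sub-branch. A single run of Moroz--Schost on a component $I_k$ uses $\Ot(d\,\eta_k)$ operations in $\Ki_{I_k}$; by the sub-additivity $\An(I_{k,1})+\cdots+\An(I_{k,h})\le \An(I_k)$ of Definition \ref{dfn:An}, the sequential refinements inside that component still cost $\Ot(d\,\eta_k\,d_{I_k})$ operations in $\Ki$. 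Summing over the final partition and using once more the sub-additivity of $\An$ gives the announced bound $\Ot(d\,d_I\,\max_k\eta_k)$.

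The main obstacle is to guarantee that the computed $\eta_k$ is \emph{the} lifting order of the pair $(G_k,H_k)$, not just an upper bound inherited from a parent component: each splitting must be produced by a genuine zero divisor detected during an XGCD call, so that on both sides of the split the Moroz--Schost invariants (and in particular the minimality characterization of $\eta$) are preserved. This is exactly the behaviour of the D5 gcd algorithm of Proposition \ref{prop:gcdD5}, and once this is in place, correctness and the complexity bound follow from combining Theorem \ref{thm:An}, Proposition \ref{prop:gcdD5}, and the analysis of \cite[Algorithm 1 and Corollary 1]{MoSc16}.
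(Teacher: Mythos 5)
Your proposal is correct and takes essentially the same approach as the paper: adapt \cite[Algorithm 1]{MoSc16} to the dynamic-evaluation setting by letting the internal (extended) gcd computations trigger splittings, then use the sub-additivity of the arithmetic time $\An$ to keep the total cost within the announced bound. The paper's own proof is more terse but also more precise about the internals of the Moroz--Schost routine — it is a \emph{divide-and-conquer} half-gcd, not an iterative scheme, built from $2\times2$ matrix multiplications (which never split), two recursive calls, and the pseudo-division operator $\mathcal Q$ which itself calls Euclidean division, extended Euclidean algorithm, and a Hensel lift via Musser's Algorithm~Q; the paper identifies exactly which of these subroutines can induce splittings, whereas you lump them all under "a bounded number of extended gcd computations." Your preliminary \ReducePol{} pass is harmless but not needed, and the justification that its $\deg_X$-dependence is "absorbed in $\max_k\eta_k$" quietly relies on a truncation of $G,H$ modulo $X^{O(\eta_k)}$ that is true in context but not stated in the lemma.
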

\begin{proof}
  As said in the introduction of their paper, \cite[Algorithm
  1]{MoSc16} is ``a suitable adaptation of the half-gcd algorithm'': a
  call to their algorithm uses polynomial multiplication (more
  precisely multiplications of $2\times2$ matrices of univariate
  polynomials), two recursive calls and one computation of the
  ``pseudo-division operator'' $\cal Q$ \cite[Section
  3.1]{MoSc16}, which includes euclidean division, extended
  Euclidean algorithm and Hensel lifting (\cite[Algorithm Q]{Mu75} to
  compute ``normal form'' of polynomials). Whence a
  finite number of call that induce splittings, all
  considered in \cite{DaMaScXi05} (multiplication induces no
  splitting, Euclidean algorithm is the key point of
  \cite{DaMaScXi05}, and \cite[Algorithm Q]{Mu75} induces splitting
  only once, via the extended Euclidean algorithm).
\end{proof}
\begin{prop}
  \label{prop:khensel-D5}
  Let $n\in\Ni$, $F$, $G$, $H\in \Ki_I[X,Y]$ with $H$ monic in $Y$,
  $F=G\,H\mod X^{2\,\eta+1}$ and $\eta\ge \kgh (G,H)$. There exists an
  algorithm that computes a set $\{I_k, G_k, H_k\}_k$ such that
  $I=\cap_k I_k$ is a non critical decomposition of $I$,
  $G_k=G\mod(I_k,X^{\eta_k+1})$, $H_k=H\mod(I_k,X^{\eta_k+1})$ and
  $F\mod I_k=G_k\,H_k\mod X^{n+2\,\eta_k}$, where
  $\eta_k=\kgh(G_k,H_k)$. Moreover, if $G_k^\star$,
  $H_k^\star\in\Ki_{I_k}[X,Y]$ satisfy
  $F\mod I_k=G_k^\star\, H_k^\star\mod X^{n+2\,\eta_k}$, then
  $G_k=G_k^\star \mod X^n$ and $H_k=H_k^\star \mod X^n$. It takes less
  than $\O(\M(n\,\dy\,d_I))$ operations in $\Ki$. We still denote
  \hensel{} such an algorithm.
\end{prop}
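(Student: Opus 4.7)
The strategy is to reduce to the non-D5 Hensel lifting (Lemma \ref{lem:hensel}) applied componentwise, arguing that the only step that can generate splittings is the initial Bézout computation.

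First, I would call the routine of Lemma \ref{lem:xgcdk-D5} on $(G,H)$ with target precision at least $\eta+1$. This produces a non-critical decomposition $I = \cap_k I_k$ together with data $(G_k,H_k,U_k,V_k,\eta_k)$ where $G_k = G \bmod I_k$, $H_k = H \bmod I_k$, $\eta_k = \kgh(G_k,H_k) \le \eta$, and $U_k G_k + V_k H_k = X^{\eta_k} \bmod X^{\eta_k+1}$ over $\Ki_{I_k}$. Since $F = GH \bmod X^{2\eta+1}$ and $2\eta+1 \ge 2\eta_k+1$, reducing modulo $I_k$ we also obtain $F \bmod I_k = G_k H_k \bmod X^{2\eta_k+1}$, so that the hypotheses of the initial call of the classical \hensel{} algorithm (Lemma \ref{lem:hensel}) are met on each component with $n_0 = 2\eta_k+1$.

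Next, for each $k$, I would simply run the doubling algorithm \onestep{} iteratively over $\Ki_{I_k}$ until the precision of the factorisation reaches $n + 2\eta_k$. The crucial point here is that \onestep{} consists exclusively of polynomial additions, multiplications, and Euclidean divisions by $\Ht$, which stays monic in $Y$ throughout the process. Monicity means the Euclidean division requires no inversion of any potential zero divisor in $\Ki_{I_k}$, so no further splitting of $I_k$ is ever triggered during the lifting. The decomposition $I = \cap_k I_k$ produced at the Bézout step is therefore the final one. Correctness of the relation $F \bmod I_k = G_k H_k \bmod X^{n+2\eta_k}$ and the uniqueness statement modulo $X^n$ follow directly by applying Lemma \ref{lem:hensel} over each local ring $\Ki_{I_k}$.

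For the complexity, the initial Bézout step via Lemma \ref{lem:xgcdk-D5} costs $\Ot(\dy\, d_I\, \eta) \subset \O(\M(n\,\dy\,d_I))$. For the doubling stages, the bound $\O(\M(n_0\,\dy))$ of Lemma \ref{lem:onestep} generalises to $\O(\M(n\,\dy\,d_{I_k}))$ over $\Ki_{I_k}$ using the fact that arithmetic in $\Ki_{I_k}$ costs $\An(I_k)$ operations (Definition \ref{dfn:An}). Summing over $k$, the subadditivity property $\sum_k \An(I_k) \le \An(I)$ together with $\An(I) \in \Ot(d_I)$ (Theorem \ref{thm:An}) yields a total cost of $\O(\M(n\,\dy\,d_I))$, matching the claimed bound. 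The main technical subtlety is precisely the monicity of $H$ (preserved by \onestep) that guarantees no splittings during the lift; once this is observed the rest is a routine componentwise application of the already-proved Lemma \ref{lem:hensel}.
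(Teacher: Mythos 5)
Your proof follows exactly the paper's approach: split via Lemma~\ref{lem:xgcdk-D5} to get the Bézout data on a non-critical decomposition of $I$, then iterate \onestep{} componentwise as in Lemma~\ref{lem:hensel}. Your additional observation that \onestep{} never triggers further splittings — because the only Euclidean divisions are by the monic $H$ and $\Ht$ and the remaining operations are ring operations — is a correct and useful justification of the point the paper's terse proof leaves implicit.
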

\begin{proof}
The D5 adaptation of the \hensel{} algorithm is straightforward:
use Lemma \ref{lem:xgcdk-D5} first, then run \onestep{} as many times as
necessary for each $(I_i, G_i, H_i, U_i, V_i, \kgh_i)$ you get, as in
the proof of Lemma \ref{lem:hensel}.
\end{proof}


\subsection{Computing polygon datas in the D5 context.}
\label{ssec:NewPol}
To simplify the writing of the \hrnp{} algorithm, we group in
algorithm \NewPol{} below the computation of the Newton polygon and
the square-free decomposition of associated characteristic
polynomials. Given $H\in \Ki_I[X,Y]$ known with precision $n$, it
returns a list $\{(I_i,H_i,\Delta_{ij},\phi_{ijk})\}_k$ such that:
\begin{itemize}
\item $I=\cap I_i$ is a non critical triangular decomposition;
\item $H_i:=H\mod I_i$ is regular;
\item $\Nn(H_i)=\{\Delta_{ij}\}_j$;
\item $\prod_k\phi_{ijk}^{M_{ijk}}$ is the square-free factorisation
  of $\phi_{\Delta_{ij}}$.
\end{itemize}

\begin{algorithm}[ht]
  \nonl\TitleOfAlgo{\NewPol($H, I, n$)\label{algo:NewPol}}%
  \KwIn{%
    $I$ a bivariate triangular set and $H\in\Ki_I[X,Y]$ known modulo
    $X^{n+1}$. We assume $n>0$ and $\deg_Y(H)>0$.%
  }%
  \KwOut{%
    A list $\{(I_i,H_i,\Delta_{ij},\phi_{ijk},M_{ijk})\}$ as explained
    above.%
  }%
  \ForEach{$(H_i,I_i)$ \In{} \ReducePol$(H,I)$}{%
    $\{\Delta_{ij}\}_{j=1,\ldots,s} \, \gets \, \Nn(H_i)$
    \tcp*{$H_i$ is regular}%
    \For{$j=1,\ldots,s$}{%
      $\{I^l_i,\phi_{ijk}^l,M_{ijk}^l\} \gets $
      \sqrfree$(\phi_{\Delta_{ij}},I_{i})$%
    }%
  }%
  $\{I'_h\}_h\gets$ \rmCP{}($\{I_i^l\}_{i,l}$)\; $\{H'_h\}_h \gets$
  \Split{}$(H_i,\{I_i^l\}_{i,l},\{I'_h\}_h)$\;%
  \ForEach{$i,j,k$}{%
    $\{\phi'_{mjk}\}_{mjk}\gets$
    \Split{}$(\phi_{ijk}^l,\{I^l_i\}_l,\{I'_h\}_h)$\tcp*{{\small taking the
      right subset} $\{I'_h\}_h$}%
  }%
  \Return
  $\{(I'_h,H'_h,\Delta_{i(m)j},\phi'_{mjk})\}_{m,j,k}$\tcp*{$i(m): m\mapsto$
    correct $i$}%
\end{algorithm}
\begin{prop}
  \label{prop:D5NewPol}%
  Algorithm \NewPol{} is correct and takes 
  $\Ot(\deg_X(H)\,\deg_Y(H)\,d_I)$ operations in $\Ki$.
\end{prop}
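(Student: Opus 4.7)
The plan is to first settle correctness, which is mostly a matter of tracking how splittings propagate, and then bound the arithmetic cost line by line, exploiting the sub-additivity of $\An$ on triangular decompositions.

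For correctness, I would argue as follows. The call to \ReducePol{} returns a non-critical decomposition $I=\cap_i I_i$ with $H_i:=H\mod I_i$ regular (Lemma~\ref{lem:splitH}). Regularity is exactly what is needed so that the support of $H_i$, and hence its $n$-truncated Newton polygon $\Nn(H_i)$, is unambiguously determined: no coefficient that looks nonzero secretly vanishes modulo some component of $I_i$. For each edge $\Delta_{ij}$, the characteristic polynomial $\phi_{\Delta_{ij}}\in\Ki_{I_i}[W]$ is well-defined, and \sqrfree{} (Proposition~\ref{prop:sqrfree}) produces a finer non-critical decomposition $I_i=\cap_l I_i^l$ together with the square-free factorisations of each $\phi_{\Delta_{ij}}\mod I_i^l$. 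Collecting all these refinements and feeding them to \rmCP{} produces a single non-critical triangular decomposition $I=\cap_h I_h'$ compatible with every splitting encountered so far; \Split{} then transports $H_i$ and the $\phi_{ijk}$ to this common decomposition. The output therefore satisfies the announced specification.

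For complexity, I would use the four contributions separately. The initial \ReducePol{} call costs $\Ot(\deg_X(H)\,\deg_Y(H)\,d_I)$ by Lemma~\ref{lem:splitH}. Computing $\Nn(H_i)$ is purely combinatorial once $H_i$ is stored (regularity means zeroness can be read off), so it contributes no arithmetic operation on top of \ReducePol{}. For each $i$, the lengths of the edges of $\Nn(H_i)$ sum to at most $\deg_Y(H_i)\le\deg_Y(H)$, so by Proposition~\ref{prop:sqrfree} the total cost of \sqrfree{} applied to all $\phi_{\Delta_{ij}}$ is at most $\Ot(\deg_Y(H)\,d_{I_i})$. Summing over $i$ and using property~1 of Definition~\ref{dfn:An} to bound $\sum_i d_{I_i}$ by $d_I$ yields $\Ot(\deg_Y(H)\,d_I)$, which is absorbed in the target bound. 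Finally, \rmCP{} costs $\Ot(d_I)$ by Definition~\ref{dfn:An}, and the \Split{} calls on the $H_i$ and the $\phi_{ijk}$ cost $\Ot(\deg_X(H)\,\deg_Y(H)\,d_I)$ and $\Ot(\deg_Y(H)\,d_I)$ respectively, since the total degree in $Y$ of all the $\phi_{ijk}$ over one branch is bounded by $\deg_Y(H)$.

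The only mild difficulty is the accounting: one has to combine the inequality $\sum_j \deg_W(\phi_{\Delta_{ij}})\le\deg_Y(H_i)$ (an elementary property of the Newton polygon) with the sub-additivity $\sum_i\An(I_i)\le\An(I)$, and argue that the further refinement $\{I_h'\}_h$ of $\{I_i^l\}_{i,l}$ again satisfies $\sum_h d_{I_h'}\le d_I$ so that the final \Split{} step stays within budget. Summing all four contributions gives the announced bound $\Ot(\deg_X(H)\,\deg_Y(H)\,d_I)$.
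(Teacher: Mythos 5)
Your proof is correct and follows essentially the same route as the paper's, which is a terse one-liner invoking Proposition~\ref{prop:sqrfree}, Theorem~\ref{thm:An}, and the two degree relations $\sum_{j,k,l}\deg(\phi_{ijk}^l)\le\deg_Y(H)$ and $\sum_{i,l}\deg(I_i^l)=\sum_h\deg(I_h')=\sum_i\deg(I_i)=d_I$. You have simply expanded that citation into an explicit line-by-line accounting; the one small wrinkle is that you attribute the bound $\sum_i d_{I_i}\le d_I$ to subadditivity of $\An$ (Definition~\ref{dfn:An}, property~1), whereas the cleaner justification is that a triangular decomposition preserves the $\Ki$-dimension, so the degrees sum \emph{exactly} to $d_I$ -- but this does not affect the conclusion.
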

\begin{proof}
  Exacteness and complexity follow from Proposition \ref{prop:sqrfree}
  and Theorem \ref{thm:An}, using
  $\sum_{j,k,l} \deg(\phi_{ijk}^l)\le d_Y(H)$ for all $i$ and
  $\sum_{i,l} \deg(I^l_i) = \sum_h \deg(I'_h) = \sum_i \deg(I_i) =
  d_I$.
\end{proof}


\subsection{Computing half Puiseux series using dynamic evaluation.}
\label{ssec:algoD5}
In order to compute also the RPEs of $F$ above the roots of any
squarefree factor $Q$ of the resultant, we are led to consider
$I=(Q,P)$ instead of $P$ as an input for \hrnp{}, the D5 variant of
\arnp{}. More precisely, the input is a set $H$, $I$, $n$, $\pi$ such
that:
\begin{itemize}
\item $I=(Q,P)$ is a bivariate triangular set over $\Ki$ ($P=Z_2$
  initially, $Q=Z_1$ admitted);%
\item $H\in\Ki_I[X,Y]$ separable, monic in $Y$, with
  $d:=\deg_Y(H)>0$;%
\item $n\in\Ni$ is the truncation order we will use for the powers of
  $X$ during the algorithm;%
\item $\pi$ the current truncated parametrisation ($\pi=(X,Y)$ for the
  initial call).%
\end{itemize}
The output is a set $\{I_i,\Rc_i\}_i$ such that:
\begin{itemize}
\item $I=\cap_i I_i$ is a non critial decomposition,
\item $\Rc_i=\{R_{ij}\}$ is a set of D5-RPE's of $H_i:=H\mod I_i$
  satisfying $n - v_{ij} \ge r_{ij}$ and given with
  precision at least $(n-v_{ij})/e_{ij}\geq r_{ij}/e_{ij}\geq 0$,
\end{itemize}
where we let $v_{ij}:=\val\left(\partial_Y H_i(S)\right)$ for any
Puiseux series $S$ associated to $R_{ij}$.  We refer to the field
version \arnp{} for all notations which are not specified here.

\begin{algorithm}[ht]
  \nonl\TitleOfAlgo{\hrnp($H,I,n,\pi$)\label{algo:HRNP}}
  $B\ \gets\ A_{d-1}/d$ ; $\pi'\ \gets\tronc{\pi(X,Y-B)}{n}$ \label{algoARNP:piAbhyankar}\tcp*{$H=\sum_{i=0}^dA_i Y^i$}%
  \leIf{$d=1$}{\Return$(I,\pi'(T,0))$}{\label{algoARNP:Abhyankarshift} $H'\gets\tronc{H(X,Y-B)}{n}$}%
  $(I_i, H_i, \Delta_i, \phi_i)_i\gets$ \NewPol$(H',I,n)$\;%
  $\{\pi_i\}_i\gets$ \Split{}($\pi',\{I_i\}_i$)\tcp*{taking only once each different $I_i$}%
  \ForAll{$i$}{%
    \lIf{$\deg(\phi_i)=1$}{$\xi_{i1},I_{i1},H_{i1},\pi_{i1}=-\phi_i(0),I_i,H_i,\pi_i$}%
    \Else{%
      $\{I_{ij},\Psi_{ij}\}_j\gets$\label{algoHRNP:changeRep}\PrimEltB($I_i,\phi_i$)\;%
      $\{H_{ij}'\}_j\gets{}$ \Split($H_i,\{I_{ij}\}_j$) ; $\{\pi_{ij}'\}_j\gets{}$ \Split($\pi_i,\{I_{ij}\}_j$)\;%
      \lForAll{$j$}{$\xi_{ij},H_{ij},\pi_{ij}\gets\Psi_{ij}(Z),\Psi_{ij}(H_{ij}'),\Psi_{ij}(\pi_{ij}')$\label{algoHRNP:prim}}%
    }%
    \ForAll(\tcp*[f]{$\Delta_i$ belongs to $\edge{m_i}{q_i}{l_i}$ ; $u_i,v_i=\bezout(m_i,q_i)$}){$j$}{%
      $\pi_{ij}''\gets\pi_{ij}(\xi_{ij}^{v_i}\,X^{q_i},X^{m_i}\,(Y+\xi_{ij}^{u_i}))\mod I_{ij}$\label{algoHRNP:piRNPShift}\;%
      $H_{ij}''\ \gets \tronc{H_{ij}(\xi_{ij}^{v_i}\,X^{q_i},X^{m_i}\,(Y+\xi_{ij}^{u_i}))}{n_i}\mod I_{ij}$ \label{algoHRNP:updateN}\tcp*{$n_i=q_i\,n-l_i$}%
      $\{(I_{ijk},H_{ijk})\}\gets$ \wpt($H_{ij}'',n_i$)\label{algoHRNP:WPT}\;%
      $\pi_{ijk}\ \gets\ \Split(\pi_{ij}'',\{I_{ijk}\}_{ijk})$\;
      \lForAll{$k$}{$\{I_{ijkl},\Rc_{ijkl}\}_l\gets$ \hrnp($H_{ijk}, I_{ijk}, n_i,\pi_{ijk}$)\label{algoHRNP:recRNPShift}}%
    }%
  }%
  $\Rc\gets\{\}$ ; $\{I_h'\}_h\gets$ \rmCP($\{I_{ijkl}\}_{ijkl}$)\;%
  \ForAll(\tcp*[f]{taking the subset of $\{I_h'\}_h$ refining $I_{ijkl}$}){$i,j,k,l$}{%
    $\Rc\gets\Rc\;\cup\;$\Split($\Rc_{ijkl},\{I_h'\}_h$)%
  }%
  \Return $\Rc$\tcp*{each element of $\Rc$ coupled to their associated $I'_h$}%
\end{algorithm}

\begin{prop}\label{prop:D5hrnp}
  Let $Q\in \Ki[Z]$ be square-free and $F\in \Ki_Q[X,Y]$ be monic and
  separable in $Y$. The function call \hrnp$(F,(Q,Z),n,(X,Y))$ returns
  a correct answer in an expected $\Ot(d_Q\,n\,\dy^2)$ operations over
  $\Ki$.
\end{prop}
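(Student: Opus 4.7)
The plan is to parallel the proof of Proposition \ref{prop:arnp-complexity}, carefully tracking the contribution of the triangular set $I$ throughout the recursion.

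For correctness, I would argue by induction on the recursion depth. At each level, \NewPol{} (Proposition \ref{prop:D5NewPol}) refines $I$ into non-critical pieces, makes $H$ regular, and provides the Newton polygons together with square-free factorisations of the characteristic polynomials. Then \PrimEltB{} (Proposition \ref{prop:primitive}) keeps us in the setting of bivariate triangular sets (Remark \ref{rem:2vars}), while the D5 version of \wpt{} (Proposition \ref{prop:wpt-D5}) and the Puiseux transform produce new monic polynomials for the recursive call. The final calls to \rmCP{} and \Split{} re-organise the outputs into a non-critical decomposition of the initial $I$, and the resulting expansions are correct D5-\RPE{}s in the sense of Definition \ref{dfn:RPED5-all} by the same analysis as in the field version (together with \cite[Section 4.1]{Du89}), and the precision claim on $n - v_{ij} \ge r_{ij}$ is inherited from Lemma \ref{lem:tronc} branch by branch.

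For complexity, I would classify the lines of a single call \hrnp$(H,I,n_H,\pi)$ into two types, as in Proposition \ref{prop:arnp-complexity}. Setting $d_H = \deg_Y(H)$, the Type~1 lines (Abhyankar shift and \NewPol{}) cost $\Ot(n_H\,d_H\,d_I)$ by Lemma \ref{lem:bivshift} and Proposition \ref{prop:D5NewPol}, and the Type~2 lines (\PrimEltB{}, Puiseux transform, \wpt{}) cost $\Ot(q\,\deg(\phi)\,n_H\,d_H\,d_I)$ by Lemma \ref{lem:onesubs} and Propositions \ref{prop:wpt-D5} and \ref{prop:primitive} (the matrix-multiplication term $d^{(\omega+1)/2}d_Q$ from \PrimEltB{} is absorbed since $d = d_P\deg(\phi) \le \dy$). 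The crucial D5-specific observation is that splittings are cost-neutral: by Definition \ref{dfn:An}(1), for any non-critical decomposition $I = \bigcap_k I_k$ one has $\sum_k \An(I_k) \le \An(I)$, so performing a fixed operation on each piece costs no more than performing it on the original ring. As in Proposition \ref{prop:arnp-complexity}, the Abhyankar trick (Lemma \ref{lem:abhyankar}) bounds the number of pure branch separations ($q=\deg(\phi)=1$) by $\rho-1$ across the whole recursion, while each non-separating step ($q>1$ or $\deg(\phi)>1$) occurs at most $\log(e_i f_i) \le \log(\dy)$ times per \RPE{} $R_i$. Summing over the $\rho$ expansions using $\sum_i e_i f_i = \dy$ then gives the total bound $\Ot(d_Q\,n\,\dy^2)$.

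The main obstacle will be bookkeeping the balance between the $d_Q$ part of the triangular set (which \emph{partitions} under splittings) and the $d_P$ part (which \emph{grows} under \PrimEltB{}). The key invariant is that whenever $d_P$ grows by a factor $d$ via a primitive element, the degree $d_H$ of the polynomials processed in the resulting branches drops by the same factor $d$ through \wpt{}, keeping $d_P\,d_H$ bounded by $\dy$ along each branch (this is the D5 counterpart of Remark \ref{rem:idea-trunc}). Combined with Definition \ref{dfn:An}(1), this ensures the total cost is $d_Q$ times that of the field version \arnp{}, yielding the claimed $\Ot(d_Q\,n\,\dy^2)$ bound.
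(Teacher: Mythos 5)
Your proof follows the same route as the paper's, which consists of the single instruction to adapt the proof of Proposition~\ref{prop:arnp-complexity} to the D5 setting using Propositions~\ref{prop:primitive}, \ref{prop:wpt-D5}, \ref{prop:D5NewPol} and Theorem~\ref{thm:An}; your argument carries out exactly that adaptation, and correctly identifies the two details the paper leaves implicit (sub-additivity of $\An$ under non-critical splittings, and the invariant that $d_P\,\deg_Y(H)$ stays bounded by $\dy$ along each branch because \PrimEltB{} grows $d_P$ while \wpt{} shrinks $\deg_Y(H)$ by the same factor).
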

\begin{proof}
  Just adapt the proof of Proposition \ref{prop:arnp-complexity} to
  the D5 context, using Propositions \ref{prop:primitive}, \ref{prop:wpt-D5}  and
  \ref{prop:D5NewPol}, together with Theorem \ref{thm:An}.
\end{proof}


\subsection{Proof of Theorem \ref{thm:puidV}.}
\label{ssec:proofdV}
We finally conclude the proof of Theorem \ref{thm:puidV},
providing the D5 variants of algorithms \wrnp{} and \rnp{}, namely
algorithms \wdrnp{} and \drnp{} below.
\paragraph{The monic case.}
As in Section \ref{sec:rnp3}, we begin with the monic case. Therein,
we assume that the \hensel{} algorithm is a D5 version, as explained
in Section \ref{ssec:compD5}. Also, we recall that $v_{ij}$ denotes
$\val\left(\partial_Y H_i(S)\right)$ for any Puiseux series $S$
associated to $R_{ij}$.

\begin{algorithm}[ht]
  \nonl\TitleOfAlgo{\wdrnp($F,Q,n$)\label{algo:RNP3-D5}}%
  \KwIn{%
    $Q\in \Ki[Z]$ square-free, $F\in\Ki_Q[X,Y]$ separable and monic
    in $Y$, and $n \in \Ni$.%
  }%
  \KwOut{%
  $\{(Q_i,\Rc_i)\}_i$, with $Q=\prod Q_i$ and $\Rc_i$ a system of singular parts of
    D5-RPEs of $F\mod Q_i$ above $0$.%
  }%
  $\eta\gets\min(n,6\,n/\dy)$ ; $\Rc\gets\{\}$\;%
  $\{I_i,\Rc_i\}_i\gets$
  \hrnp($F,(Q,Z_2),\eta,\pi$)\label{D5algornp:lineARNP}\tcp*{$I_i=(Q_i,Z_2)$}%
  $\{F_i\}_i\gets$ \Split($F,\{Q_i\}_i$)\;%
  \ForAll{$i$}{%
    Keep in $\Rc_i$ the $R_{ij}$ such that $v_{ij} <\eta/3$\tcp*{known with precision $\ge 2\eta/3$}%
    \lIf{$\# \Rc_i=\dy$}{%
      $\Rc\gets\Rc\cup\;\{Q_i,\Rc_i\}$ ; \Continue{}%
    }%
    $G_i\;\gets\;\norm(\Rc_i,2\eta/3)$\label{D5algornp:norm}\;%
    $H_i\;\gets\;\quo(F_i,G_i,2\eta/3)$\tcp*{no splitting since $G_i$
      is monic}%
    $\{Q_{ij},G_{ij},H_{ij}\}_j\gets$ \hensel($F_i,G_i,H_i,n$)\label{D5algornp:hensel}\;%
    \lForAll{$j$}{%
      $\{(Q_{ijk},\Rc_{ijk})\}_k\gets$
      \wdrnp($H_{ij},Q_{ij},n,\pi$)%
    }%
    $\{\Rc'_{ijk}\}\gets$ \Split($\Rc_i,\{Q_{ijk}\}_{j,k}$)\;%
    $\Rc\gets\Rc\cup\{(Q_{ijk},\Rc_{ijk}\;\cup\;\Rc_{ijk}')_{j,k}\}$\;%
  }%
  \Return $\Rc$
\end{algorithm}

Recall the notations $R_F=\res(F,F_Y)$ and $\vRF=\val \, (R_F)$. We
obtain:
\begin{prop}
  \label{prop:D5rnp3W}
  Assuming that $n\geq\vRF$ and that the trailing coefficient of $R_F$
  is not a zero divisor in $\Ki_Q$, a function call \wdrnp($F,Q,n$)
  returns a correct answer in an expected $\Ot(d_Q\,\dy\,n)$ operations
  over $\Ki$.
\end{prop}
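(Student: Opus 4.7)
The plan is to mirror the proof of Proposition \ref{prop:Wrnp3} (the field version) in the D5 context, using the subadditivity property of arithmetic time (Theorem \ref{thm:An}) to absorb the cost of splittings, and exploiting the fact that all subroutines called by \wdrnp{} have been lifted to the D5 context in Sections \ref{ssec:compD5} and \ref{ssec:NewPol}.

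For correctness, the key observation enabled by the hypothesis on the trailing coefficient of $R_F$ is that for any splitting $Q=\prod_i Q_i$ produced by the algorithm, the coefficient of $X^{\vRF}$ in $R_F$ remains nonzero in each $\Ki_{Q_i}$, so $\vRF[F\bmod Q_i]=\vRF$ uniformly across components. In particular the input bound $n\geq\vRF$ is inherited by every recursive call, and Corollary \ref{cor:bound-kgh} applies componentwise, guaranteeing $\kgh(G_i,H_i)\leq 2\vRF/\dy\le\eta/3$ so that \hensel{} (Proposition \ref{prop:khensel-D5}) receives valid input at line \ref{D5algornp:hensel}. Correctness of Line \ref{D5algornp:lineARNP} is Proposition \ref{prop:D5hrnp}; the remaining lines are just \Split{} and bivariate arithmetic, so the structural argument of Proposition \ref{prop:Wrnp3} carries through on each component of the non-critical decomposition.

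For complexity, I would estimate the cost of a single call (excluding recursive calls). Line \ref{D5algornp:lineARNP} runs in $\Ot(d_Q\,\dy^2\,\eta)=\Ot(d_Q\,\dy\,n)$ by Proposition \ref{prop:D5hrnp} with $\eta=6n/\dy$. Lines \ref{D5algornp:norm} and \ref{D5algornp:hensel}, summed over the components produced by \hrnp{}, fit in $\Ot(\dy\,n\,\sum_i d_{Q_i})=\Ot(\dy\,n\,d_Q)$ by combining Lemma \ref{lem:norm} and Proposition \ref{prop:khensel-D5} (both translated via $\An(I_1)+\cdots+\An(I_k)\le\An(I)$, which is exactly the subadditivity property (i) of Definition \ref{dfn:An}). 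The division \quo{} is handled via Newton iteration within the same bound, and \Split{} is absorbed by Theorem \ref{thm:An}. Hence one call costs $\Ot(d_Q\,\dy\,n)$.

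The recursion is then controlled as in the field case: each recursive call at line \ref{D5algornp:hensel} is on polynomials $H_{ij}$ of $Y$-degree at most $\dy/2$ (this is preserved because \hrnp{} returns at least $\dy/2$ Puiseux series with $v_{ij}<\eta/3$, as in Proposition \ref{prop:vi-V} componentwise), and the combined triangular parameters $\{Q_{ij}\}_j$ at recursion depth one still satisfy $\sum_j d_{Q_{ij}}\le d_{Q_i}$, hence $\sum d_{Q_{ijk}}\le d_Q$ at every level. Summing geometrically in $\dy$ over the $\log_2(\dy)$ recursion levels with the aid of Lemma \ref{lem:M}, the total cost remains $\Ot(d_Q\,\dy\,n)$. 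The main subtlety will be bookkeeping: showing that the successive triangular decompositions at different recursion levels can indeed be merged into a single non-critical decomposition of $Q$ whose total ``size'' is bounded by $d_Q$, and that the uniform truncation bound $n$ survives all splittings under the trailing coefficient assumption; once these points are verified, the complexity estimate telescopes as expected.
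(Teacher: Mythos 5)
Your proof is correct and follows the same approach as the paper's: adapt Proposition~\ref{prop:Wrnp3} to the D5 context via Propositions~\ref{prop:D5hrnp} and~\ref{prop:khensel-D5} together with the subadditivity of arithmetic time (Theorem~\ref{thm:An}), with the trailing-coefficient hypothesis serving precisely to guarantee that $n\geq\vRF$ remains a valid truncation bound after every splitting of $Q$. The paper's own proof is terser (it also notes explicitly that \quo{} involves only monic divisors and so causes no splittings), but your argument fills in the same details the paper defers to the reader.
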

\begin{proof}
  The assumption on the trailing coefficient of the resultant of $F$
  is needed only to ensure that the truncation bound $\vRF$ is enough
  over all factors of $Q$. Otherwise, this is just an adaptation of
  the proof of Proposition \ref{prop:Wrnp3} to the D5 context, using
  Propositions \ref{prop:khensel-D5} and \ref{prop:D5hrnp}, together
  with Theorem \ref{thm:An} once again (subroutine \quo{} is used only
  with monic polynomials, and the remaining operations do not include
  any division).
\end{proof}
\paragraph{The general case.}
Algorithm \drnp{} below computes a system of singular part (at least)
of D5-RPEs of a primitive polynomial $F$ above the roots of any
square-free factor $Q$ of its resultant $R_F$.  We follow the same
strategy as in Algorithm \rnp{}, but we take care of triangular
decompositions due to division by zero divisors.  In particular, we
assume that algorithm \monic{} is a D5 version (it contains one call
to the extended Euclidean algorithm). Also, inversion of the RPEs of
$\tilde F_{\infty}$ can lead to some splittings (while inverting the
trailing coefficient of the series). However, we do not detail these
further splittings for readibility.

\begin{algorithm}[ht]
  \nonl\TitleOfAlgo{\drnp($F,Q,n$)\label{algo:RNP3}}%
  \KwIn{%
    $Q\in\Ki[Z_1]$ square-free, $F\in\Ki[X,Y]$ separable in $Y$ with
    $\dy>0$, and $n\in\Ni$ big enough.%
  }%
  \KwOut{%
    A system of singular parts (at least) of
    D5-RPEs of $F$ above the roots of $Q$.%
  }%
  $\Rc\gets \{\}$\ ;\ $\tilde F\gets\tronc{F(X+Z_1,Y)\mod Q}{n}$\tcp*{thus
    $\tilde F\in \Ki_Q[X,Y]$}%
  $\{Q_i,F_{i,0},F_{i,\infty}\}_i\gets$ \monic($\tilde F,n$)\;%
  \ForAll{$i$}{%
    $\tilde F_{i,\infty}\gets Y^{\deg_Y(F_{i,\infty})}F_{i,\infty}(X,1/Y)$\;%
    $\{Q_{ij},\Rc_{ij}\}_j\gets$
    \wdrnp($F_{i,0},Q_i,n$)\label{D5-algornp3:wrnp}\;%
    $\{Q_{ik}',\Rc_{ik}'\}_k\gets$
    \wdrnp($\tilde F_{i,\infty},Q_i,n$)\label{D5-algornp3:wrnp2}\;%
    \ForAll{$k$}{%
      Inverse the second element of each $R\in\Rc_{ik}'$\;
      Split $\{Q_{ik}',\Rc_{ik}'\}$ if required\;%
    }%
    $\{Q_{il}^{''}\}_l\gets$ \rmCP($\{Q_{ij}\}_j\cup
    \{Q_{ik}'\}_k$)\;%
    \lForAll{$k,j$}{ $\Rc\gets\Rc\;\cup$
      \Split($\Rc_{ij}$,$\{Q_{il}^{''}\}_l$) $\cup$
      \Split($\Rc_{ik}'$,$\{Q_{il}^{''}\}_l$)
    }%
  }%
  \Return{} $\Rc$\tcp*{elements of $\Rc$ with the same
    $Q''_{il}$ grouped together}%
\end{algorithm}

\begin{prop}\label{prop:D5rnp3}
  Assuming that $Q$ is a square-free factor of $R_F$ with multiplicity
  $n_Q\le n$, a function call \drnp($F,Q,n$) returns the correct
  answer in less than $\Ot(d_Q\,\dy\,n)$ operation overs $\Ki$.
\end{prop}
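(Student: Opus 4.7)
The plan is to mirror the proof of Proposition \ref{prop:rnp} step by step, replacing each field-based subroutine by its D5 counterpart from Section \ref{ssec:compD5} and tracking the non-critical triangular decompositions produced along the way. First, the substitution $X \gets X+Z_1 \bmod Q$ reduces the computation above all roots of $Q$ to the case $x_0=0$ performed uniformly over $\Ki_Q$. The D5 version of \monic{} then returns a non-critical decomposition $Q=\prod_i Q_i$ together with a factorisation $\tilde F \equiv u_i \, F_{i,0}\, F_{i,\infty} \bmod (Q_i,X^n)$, with $F_{i,0}$ monic in $Y$ and $F_{i,\infty}(0,Y)=1$. On each branch one then invokes \wdrnp{} on $F_{i,0}$ and on the reciprocal $\tilde F_{i,\infty}$; the RPEs at $(0,\infty)$ are recovered by inverting the second coordinate of the output of the second call, exactly as in \rnp{}. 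The correctness that the truncation bound $n$ is sufficient after inversion at infinity is Proposition \ref{prop:prec-infty}, applied in each $\Ki_{Q_i}$-branch. Since inverting a series with zero-divisor trailing coefficient may force additional splittings of $Q_i$, the algorithm refines the decomposition accordingly, then merges everything through \rmCP{} and \Split{} to return a non-critical decomposition over which all parametrisations are regular, thus a bona fide system of D5-RPEs in the sense of Definition \ref{dfn:RPED5-all}.

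To legitimately invoke Proposition \ref{prop:D5rnp3W} in each branch, one has to check its two hypotheses above each $Q_i$: namely that $n \geq \vRF[\tilde F \bmod Q_i]$ and that the trailing coefficient of the relevant resultant is not a zero divisor in $\Ki_{Q_i}$. Here the shift brings the roots of $Q$ to $X=0$, so $\val_X(R_{\tilde F} \bmod Q_i)$ equals the multiplicity of $Q_i$ in $R_F$, which is at most $n_Q \leq n$ by assumption; and since $Q$ (hence each $Q_i$) is square-free, the coefficient of $X^{n_Q}$ in $R_{\tilde F}$ is regular modulo $Q_i$. Passing from $\tilde F$ to $F_{i,0}$ only divides the resultant by a unit and by the resultant of $F_{i,\infty}$, both of which are regular in $\Ki_{Q_i}[[X]]$, so the conclusion for $\tilde F_{i,\infty}$ goes through analogously once one remarks that the reciprocal operation preserves squarefreeness of the resultant at $X=0$.

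The complexity is then controlled by the additivity property of an arithmetic time recalled in Definition \ref{dfn:An} and Theorem \ref{thm:An}. The initial truncated shift and the call to \monic{} cost $\Ot(d_Q\,\dy\,n)$ by Proposition \ref{prop:multihensel} extended to the D5 setting. On each branch $Q_i$, Proposition \ref{prop:D5rnp3W} bounds the cost of the two \wdrnp{} calls by $\Ot(d_{Q_i}\,\dy\,n)$. Since the $Q_i$ form a non-critical decomposition of $Q$, we have $\sum_i d_{Q_i}=d_Q$, so summing over $i$ yields $\Ot(d_Q\,\dy\,n)$ operations. The final inversion step, together with the calls to \rmCP{} and \Split{} used to merge the decompositions coming from the two \wdrnp{} calls, fits in the same bound by Theorem \ref{thm:An}.

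The main obstacle is the bookkeeping of splittings: the algorithm produces decompositions at several places (inside \monic{}, inside each \wdrnp{} call, and during inversion at infinity), and one must argue that the global decomposition obtained after merging remains non-critical and that the total work is still bounded by $\Ot(d_Q\,\dy\,n)$ rather than a sum that ignores the additivity property. All of this is already encapsulated in the arithmetic time formalism of Section \ref{ssec:compD5}, so the proof amounts to invoking Propositions \ref{prop:prec-infty}, \ref{prop:D5rnp3W} and \ref{prop:multihensel} branchwise and summing with Theorem \ref{thm:An}.
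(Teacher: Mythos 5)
Your proof is correct and follows the same route as the paper's (which is quite terse): it invokes Propositions \ref{prop:rnp} and \ref{prop:D5rnp3W} for correctness, and Theorem \ref{thm:An} together with $\sum_i \deg(Q_i)=d_Q$ and $\deg_Y(F_{i,0})+\deg_Y(F_{i,\infty})=\dy$ for complexity. The main thing you add is an explicit justification of the trailing-coefficient hypothesis of Proposition \ref{prop:D5rnp3W} — that $\val_X(R_{\tilde F}\bmod Q_i)=n_Q\leq n$ with regular leading coefficient because $Q$ is square-free and has exact multiplicity $n_Q$ in $R_F$, and that this regularity transfers to $F_{i,0}$ and $\tilde F_{i,\infty}$ via multiplicativity of the resultant — whereas the paper disposes of this with the phrase ``by construction''; this is a useful elaboration rather than a different argument.
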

\begin{proof}
  The correctness follows from Propositions \ref{prop:rnp} and
  \ref{prop:D5rnp3W} (the trailing coefficient of the resultant of
  $F_{i,0}$ and $F_{i,\infty}$ is not a zero divisor by
  construction). The complexity follows from Propositions
  \ref{prop:wpt} and \ref{prop:D5rnp3W}, Theorem \ref{thm:An},
  together with the relations $\deg_Y(F_{i,0})+\deg_Y(F_{i,\infty})=\dy$ and
  $\sum_{i}\deg(Q_i)=d_Q$.
\end{proof}
\begin{proof}[Proof of Theorem \ref{thm:puidV}.] The algorithm
  mentionned in Theorem \ref{thm:puidV} is Algorithm \drnp{}, run with
  parameters $Q=Z_1$ and $n=\vRF$, which can be computed via
  \cite[Algorithm 1]{MoSc16} in the aimed bound. Note that as we
  consider the special case $Q=Z_1$, $F$ has coefficients over a field
  and this operation does not involve any dynamic evaluation. The
  function call \drnp($F,Z_1,\vRF)$ fits into the aimed complexity
  thanks to Proposition \ref{prop:D5rnp3}.
\end{proof}


\section{Desingularisation and genus of plane curves
  .}
\label{sec:desing}
It is now straightforward to compute a system of singular parts of D5
rational Puiseux expansions above all critical points. We include the
RPEs of $F$ above $x_0=\infty$, defined as RPEs above $x_0=0$ of the
reciprocal polynomial $\tilde F:=X^{\dx} F(X^{-1},Y)$.
We have $\val(R_{\tilde F})=\dx\,(2\,\dy-1)-\deg(R_F)$.
\begin{dfn}\label{def:desing} Let $F\in \Ki[X,Y]$ be a separable
  polynomial over a field $\Ki$. A D5-desingularisation of $F$ over
  $\Ki$ is a collection
  $\{(\Rc_1,Q_1),\ldots, (\Rc_s,Q_s),\Rc_{\infty}\}$ such that:
  \begin{itemize}
  \item $Q_k\in \Ki[X]$ are pairwise coprime, square-free and satisfy
    $R_F=\prod_{k=1}^s Q_k^{n_k}$, $n_k\in \Ni^*$;
  \item $\Rc_k$ is a system of singular parts (at least) of D5-RPEs of
    $F$ above the roots of $Q_k$;
  \item $\Rc_{\infty}$ is a system of singular parts (at least) of
    D5-RPEs of $F$ above $X=\infty$.
  \end{itemize}
\end{dfn}
Note the following points:
\begin{itemize}
\item we can deduce from a D5-desingularisation of $F$ the singular
  part of the RPE's of $F$ above any root of $R_F$,
\item we allow $n_k=n_l$ for $k\ne l$ (the factorisation
  $R_F=\prod_{k=1}^s Q_k^{n_k}$ is not necessarily a square-free
  factorisation).
\end{itemize}
We obtain the following
algorithm:
\begin{algorithm}[ht]
  \nonl\TitleOfAlgo{\desing($F$)\label{algo:desing}}%
  \KwIn{%
    $F\in\Ki[X,Y]$ separable and primitive in $Y$, with $\dy>0$.%
  }%
  \KwOut{The D5-desingularisation of $F$ over $\Ki$}%
  $\Rc\gets\{\}$\;%
  \lForAll{$(Q,n) \in$ \sqrfree$(R_F)$}{%
    $\Rc\gets\Rc\cup\drnp(F,Q,n)$
  }%
  $n\gets{}\dx\,(2\,\dy-1)-\deg(R_F)$\;%
  \lIf{$n>0$}{$\Rc\gets\Rc\cup\drnp(\tronc{X^{\dx} F(X^{-1},Y)}{n},Z,n)$}%
  \Return{$\Rc$}
\end{algorithm}
\begin{prop}\label{prop:desing}
  Algorithm \desing($F$) works as specified. It takes an expected
  $\Ot(\dx\,\dy^2)$ operations over $\Ki$.
\end{prop}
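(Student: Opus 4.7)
The plan is to prove separately correctness and complexity, reducing both to properties of the subroutines that have already been established.

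For correctness, I would first observe that $R_F$ has degree at most $\dx(2\dy-1)$ in $X$, so its square-free factorisation $R_F=\prod_k Q_k^{n_k}$ is well-defined and the pairs $(Q_k,n_k)$ produced by \sqrfree{} satisfy the hypothesis of Proposition \ref{prop:D5rnp3}: each $Q_k$ is square-free and its multiplicity in $R_F$ is exactly $n_k\le n_k$. Hence each call $\drnp(F,Q_k,n_k)$ returns a system of singular parts of D5-RPEs of $F$ above the roots of $Q_k$, matching Definition \ref{def:desing}. For the fiber at infinity, the critical points of $\tilde F:=X^{\dx}F(X^{-1},Y)$ above $X=0$ are in bijection with the critical points of $F$ above $X=\infty$, and $\val(R_{\tilde F})=\dx(2\dy-1)-\deg(R_F)$ as indicated in the excerpt; when this valuation is zero, $F$ has no critical point at infinity and there is nothing to add, otherwise the hypothesis of Proposition \ref{prop:D5rnp3} is again satisfied and $\drnp(\tronc{\tilde F}{n},Z,n)$ returns the singular parts of the D5-RPEs above $\infty$. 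Collecting everything produces a valid D5-desingularisation.

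For complexity, the key telescoping is $\sum_k d_{Q_k}\,n_k \le \deg(R_F) \le \dx(2\dy-1)$, together with the observation that $R_F$ and its square-free factorisation can be computed within the target bound (the resultant $R_F\in\Ki[X]$ has degree $\O(\dx\dy)$, so \sqrfree{} costs $\Ot(\dx\dy)$; the resultant itself is classically computable in $\Ot(\dx\dy^2)$). For each factor $(Q_k,n_k)$, Proposition \ref{prop:D5rnp3} gives cost $\Ot(d_{Q_k}\,\dy\,n_k)$, whose sum is
\[
\Ot\Bigl(\dy\sum_k d_{Q_k}n_k\Bigr) \subset \Ot(\dy\cdot\dx\dy) = \Ot(\dx\,\dy^2).
\]
The call at infinity costs $\Ot(d_Z\,\dy\,n)$ with $d_Z=1$ and $n\le\dx(2\dy-1)$, which is also $\Ot(\dx\dy^2)$. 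Summing gives the announced bound.

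The main obstacle I expect is simply to verify that the reduction to $\drnp$ with the multiplicity $n_k$ as truncation bound is admissible, i.e. that this bound really dominates the individual resultant valuations of the localised polynomials $F(X+z,Y)$ for every root $z$ of $Q_k$. This follows from the fact that for each such root $z$, the $X$-valuation of $R_F(X+z)$ equals the multiplicity of $X-z$ in $R_F$, hence equals $n_k$ since $z$ is a root of $Q_k$; so the truncation bound passed to \drnp{} is at least the relevant $\vRF$ for every branch of the dynamic-evaluation tree, as required by Proposition \ref{prop:D5rnp3}. The infinity case is analogous via $\tilde F$. Everything else is bookkeeping on the triangular decompositions produced by \rmCP{} and \Split{}, which are absorbed in the arithmetic-time bound of Theorem \ref{thm:An}.
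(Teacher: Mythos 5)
Your proof is correct and follows essentially the same route as the paper: correctness via Proposition~\ref{prop:D5rnp3}, complexity via the resultant and square-free factorisation costs together with the identity $\sum_k \deg(Q_k)\,n_k + \vRF[\tilde F] = \dx(2\dy-1)$ (you use the corresponding inequalities, which suffice). The extra detail you supply---that the multiplicity $n_k$ really is the local resultant valuation of $F(X+z,Y)$ for each root $z$ of $Q_k$, so that the truncation bound handed to \drnp{} is admissible---is exactly the point the paper leaves implicit in the phrase ``correctness is straightforward from Proposition~\ref{prop:D5rnp3}.''
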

\begin{proof}
  Correctness is straightforward from Proposition
  \ref{prop:D5rnp3}. The computation of the resultant $R_F$ fits in
  the aimed bound \cite[Corollary 11.21, page 332]{GaGe13}, so is its
  square-free factorisation \cite[Theorem 14.20, page 4]{GaGe13}. The
  complexity is then a consequence of Proposition \ref{prop:D5rnp3},
  using the classical formula
  $\sum_k \deg(Q_k) n_k + \vRF[\tilde F] = \dx\,(2\,\dy-1)$.
\end{proof}

\paragraph{Proof of Theorem \ref{thm:puid3}.} It follows immediately
from Proposition \ref{prop:desing}.\hfill{}$\square{}$

\paragraph{Computing the genus of plane curves: proof of Corollaries
  \ref{cor:g}, \ref{cor:gMC} and \ref{cor:gLV}.}
Let $\{(Q_k,\Rc_k)\}_k$ be a D5-desingularisation of $F$. Since the
D5-RPEs $R_{ki}\in \Rc_k$ are regular by construction, the
ramification indices of all classical Puiseux series (i.e with
coefficients in $\algclos{\Ki}$) determined by $R_{ki}$ are
equal. If $F$ is irreducible over $\algclos{\Ki}$, the
Riemann-Hurwitz formula determines the genus $g$ of the projective
plane curve defined by $F$ as
\[
g=1-\dy+\frac{1}{2}\sum_k
\deg(Q_k)\sum_{i=1}^{\rho_k}f_{ki}(e_{ki}-1),
\]
where $f_{ki}$ and $e_{ki}$ are respectively the residual degrees and
ramification indices of the RPE $R_{ki}$.  This proves Corollary
\ref{cor:g}. Corollaries \ref{cor:gMC} and \ref{cor:gLV}
follow from \cite{PoRy08,PoRy12},
where the authors show that we can reduce $F$ modulo a well chosen
small prime within the given bit complexities.

\section{Factorisation in $\Ki[[X]][Y]$.}%
\label{sec:facto}
Our aim is to compute the irreducible analytic factors of $F$ in
$\Ki[[X]][Y]$ with precision $X^N$, and to do so in at most
$\Ot(\dy(\vRF+N))$ operations over $\Ki$, plus the cost of one
univariate factorisation of degree at most $\dy$.  The idea is to
first compute a factorisation modulo $X^{\vRF}$, and then to lift this
factorisation thanks to the following result:
\begin{prop}\label{prop:factolift}
  Let $F\in \Ki[[X]][Y]$, separable of degree $d$. Suppose given a
  modular factorisation
  \begin{eqnarray}\label{eq:2kappa}
    F\equiv u F_1\cdots F_k\mod X^n, \quad n> 2\kgh
  \end{eqnarray}
  where $u\in \Ki[[X]]^\times$, for all $i$ either
  $F_i$ or its reciprocal polynomial $\tilde F_i$ is monic, and
  \[
    \kgh=\kgh(F_1,\ldots,F_k):=\max_{I,J}\,\kgh(F_I,F_J),
  \]
  the maximum of the lifting orders being taken over all disjoint
  subsets $I,\,J\subset \{1,\ldots,k\}$, with $F_I=\prod_{i\in I} F_i$.
  Then there exists uniquely determined analytic factors
  $F_1^*,\ldots,F_k^*$ such that $F= u^* F_1^*\cdots F_k^*$, where
  \[
    F_i^*\equiv F_i\mod
    X^{n-\kgh}\,\,\text{ and }\,\,u^*\in \Ki[[X]],\,\,u^*\equiv u\mod
    X^{n-\kgh}.
  \]
  Moreover, starting from $(\ref{eq:2kappa})$, we can compute the
  $F_i^*$ up to an any precision $N\ge n-\kgh$ in $\Ot(dN)$ operations
  over $\Ki$.
\end{prop}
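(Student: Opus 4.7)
The overall plan is a multi-factor generalization of Lemma~\ref{lem:hensel}, organized as a divide-and-conquer on the number of factors. First I would reduce to the case where all $F_i$ are monic in $Y$: apply \monic{} to $F$ at precision $n$, obtaining $F\equiv w\,F_0\,F_\infty \mod X^n$ with $F_0$ monic and $\tilde F_\infty$ monic. Group the input factors $F_i$ according to whether $F_i$ or $\tilde F_i$ is monic, and use the uniqueness in Proposition~\ref{prop:wpt} (applied to both $F$ and to the product $F_1\cdots F_k$) to match these two groupings up to units. For the factors whose reciprocal is monic, apply the transformation $Y\mapsto Y^{\deg}F(X,1/Y)$; this reduces the whole problem to lifting a modular factorisation $F_0\equiv F_{i_1}\cdots F_{i_r}\mod X^n$ of a monic polynomial into monic factors, and similarly for $\tilde F_\infty$.

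For the monic case, I would proceed by a balanced binary divide-and-conquer. Split $\{F_{i_1},\ldots,F_{i_r}\}$ into two disjoint subsets $I,J$ of sizes $\lceil r/2\rceil$ and $\lfloor r/2\rfloor$, and set $G=F_I=\prod_{i\in I}F_i$, $H=F_J=\prod_{j\in J}F_j$. By hypothesis $\kgh(G,H)\le \kgh<n/2$, and by construction $F\equiv G\cdot H\mod X^n$ with $n>2\kgh$, so the hypotheses of Lemma~\ref{lem:hensel} are satisfied. Apply \hensel{} to lift $(G,H)$ to $(G^*,H^*)$ with $F\equiv G^*H^*\mod X^{N+2\kgh}$ (and precision $N$ on $G^*,H^*$), using $U,V$ from \cite{MoSc16}. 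Recurse on $(G^*,I)$ and $(H^*,J)$, each time splitting the current subset in two and calling \hensel{}: at every recursion level the subsets remain disjoint so the relevant lifting orders are still bounded by $\kgh$, and every intermediate modular factorisation is known with precision at least $N\ge n-\kgh>\kgh$, so the precision hypothesis $n>2\kgh$ of Lemma~\ref{lem:hensel} is preserved throughout.

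Uniqueness of $(F_1^*,\ldots,F_k^*)$ and of $u^*$ modulo $X^{n-\kgh}$ then follows by induction on $k$ from the uniqueness clause of Lemma~\ref{lem:hensel}: at each split, the pair $(G^*,H^*)$ is uniquely determined modulo $X^{n-\kgh}$ by $(F,G,H)$, and by induction each factor inside $G^*$ and $H^*$ is unique modulo $X^{n-\kgh}$. For the complexity, the recursion tree has depth $O(\log k)\subset O(\log d)$, and at level $j$ the Hensel calls are made on polynomials of total degree in $Y$ summing to $d$. By Lemma~\ref{lem:hensel}, a single lifting at that level costs $O(\M(N d_j)+\M(\kgh d_j)\log(\kgh d_j))$ where $d_j$ is the degree of the polynomial being split; summing over all nodes of the tree at a given level gives $\Ot(dN+d\kgh)$ per level, and using $\kgh<N$ this is $\Ot(dN)$. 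Multiplying by the $O(\log d)$ depth is absorbed by the $\Ot$ notation, yielding the claimed $\Ot(dN)$ bound.

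The only delicate point is the bookkeeping of precisions across the recursion: each application of \hensel{} loses $\kgh$ units of precision on the output compared to the input, and we must verify that after $O(\log d)$ levels we still have enough precision to satisfy the hypothesis $n>2\kgh$ at every node, and to reach the final output precision $N\ge n-\kgh$. This works because the ``loss'' of $\kgh$ occurs only once at each level in the sense that, at every node, we call \hensel{} with the same lifting target $N$ and the same bound $\kgh$ on the lifting order; the precision of the input at a child node is the precision of the output at the parent, which is already $N$ by Lemma~\ref{lem:hensel}. Thus the invariant ``input precision $\ge N+2\kgh$'' (equivalently $\ge n$ at the root, and preserved by \hensel{}) propagates down the tree, ensuring correctness at every recursive call.
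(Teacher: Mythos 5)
Your overall architecture — a balanced binary divide and conquer tree with Lemma~\ref{lem:hensel} at each node, together with a reduction to the monic case via \monic{} — is exactly the route the paper takes (it is \cite[Algorithm 15.17]{GaGe13} with the one--step lift and the B\'ezout computation replaced). The problem is the precision bookkeeping in your last paragraph, which is not correct as stated and does not close the main difficulty of the proof.

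You write that ``the precision of the input at a child node is the precision of the output at the parent, which is already~$N$,'' and conclude that the invariant ``input precision $\ge N+2\kgh$'' propagates. This conflates two different quantities. What is true is that the lifted polynomial $\tilde F_J$ at a node $J$ is \emph{computed} to precision~$N$ (or more). What the hypothesis of Lemma~\ref{lem:hensel} at the children $I_1,I_2$ actually requires is that the modular congruence $\tilde F_J \equiv F_{I_1}F_{I_2}\bmod X^{m}$ hold for some $m$ exceeding twice the local lifting order $\kgh(F_{I_1},F_{I_2})$. That precision $m$ is \emph{not}~$N$: the given modular subproducts $F_{I_1},F_{I_2}$ are only known mod~$X^n$, and the agreement between $\tilde F_J$ and $F_{I_1}F_{I_2}$ is bounded by the agreement between $\tilde F_J$ and the modular $F_J$, which is $n-\kgh(F_J,F_{\overline J})$ (not $N$, and possibly much smaller). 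Your other stated justification — ``$N\ge n-\kgh>\kgh$, so the precision hypothesis $n>2\kgh$ of Lemma~\ref{lem:hensel} is preserved'' — is off by a factor of two: you need the intermediate precision to exceed \emph{twice} the local lifting order, which can be as large as~$\kgh$, so exceeding~$\kgh$ is not enough.

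The condition that actually must be verified at a node $J$ with children $I_1,I_2$ is
\[
n-\kgh\bigl(F_J,F_{\overline J}\bigr) \;>\; 2\,\kgh\bigl(F_{I_1},F_{I_2}\bigr),
\]
and this does follow from $n>2\kgh$, but only via an identity that your argument never invokes. Because $\kgh(G,H)=\val\bigl(\res(G,H)\bigr)$ and the resultant is multiplicative, one has for any disjoint decomposition $J=I_1\sqcup I_2$
\[
\kgh\bigl(F_J,F_{\overline J}\bigr)+2\,\kgh\bigl(F_{I_1},F_{I_2}\bigr)
\;=\;
\kgh\bigl(F_{I_1},F_{\overline{I_1}}\bigr)+\kgh\bigl(F_{I_2},F_{\overline{I_2}}\bigr)
\;\le\; 2\kgh,
\]
and one must also justify that $\tilde F_J\equiv F_{I_1}F_{I_2}\bmod X^{\,n-\kgh(F_J,F_{\overline J})}$ by observing that the true subproduct $\bar F_J$ agrees with the modular $F_J$ to that precision (which follows from the uniqueness part of Lemma~\ref{lem:hensel} applied to the factorisation of~$F$ as $F_J$ times its cofactor, with lifting order $\kgh(F_J,F_{\overline J})$). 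Without some version of this argument, the claim that $n>2\kgh$ suffices throughout the recursion is not justified.
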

\begin{proof}
  Replace in \cite[Algorithm 15.17]{GaGe13} the use of
  \cite[Algorithm 15.10]{GaGe13} (line 6) by the \onestep{} algorithm,
  and the extented Euclidean algorithm (line 4) by \cite[Algorithm
  1]{MoSc16}. Existence and unicity of the lifting follow from Lemma
  \ref{lem:hensel}. So does complexity.
\end{proof}
\begin{rem}
  This results improves \cite[Lemma 4.1]{Ca86}, where $\kappa$ is
  replaced by $\vRF/2\ge \kgh$. Note that if $\kgh=0$, this is the
  classical multifactor Hensel lifting. Otherwise, note that instead
  of starting from a univariate factorisation, we need to know the
  initial factorisation modulo a higher power of $X$.
\end{rem}
\begin{proof}[Proof of Theorem \ref{thm:anfact}]
  We proceed as follows:
  \begin{enumerate}
  \item Compute $\vRF$ in the aimed bound.
  \item Adapt \drnp{} (called with parameters $F$, $Z$ and $\vRF$):
    \begin{itemize}
    \item Make the \norm{} call (line \ref{D5algornp:norm} of \wdrnp)
      additionally output minimal polynomials of the computed RPEs
      (i.e. the polynomials $G_i$ of Section \ref{ssec:norm});
    \item Replace the \hensel{} call (line \ref{D5algornp:hensel} of
      \wdrnp) by its multi-factor version (i.e. Proposition
      \ref{prop:factolift});
    \item Output the lifted factors instead of the RPEs in \wdrnp{}.
    \end{itemize}
  \item We get factors $\tilde F_i$ known modulo $X^{\vRF+1}$, with
    coefficients in a product of fields $\Ki_{P_i}$ and
    $\sum \deg(P_i)=\sum f_i\le \dy$. Perform the univariate
    factorisation of the $P_i$ and split accordingly the $\tilde F_i$
    to get a factorisation $F= u^* F_1^*\cdots F_k^*$ modulo
    $X^{\vRF}$.
  \item If $n>\vRF$, use Proposition \ref{prop:factolift} to lift this
    factorisation to the required precision.\qedhere{}
  \end{enumerate}
\end{proof}


\section{Concluding remarks}
\label{sec:conc}

In this paper, we provide worst-case complexity bounds for the local
and global desingularisation which are equivalent (up to a logarithmic factor)
to the computation of respectively the first non-zero coefficient of
the resultant $R_F$ \cite{MoSc16} and the resultant
computation. However, this provides for the moment only a theoretical
algorithm: our algorithm is a combination of many subroutines, and the
implementation of a fast efficient version would require a huge
amount of work, especially due to the dynamic evaluation
part. Moreover, there might be algorithm easier to implement that we
plan to study in future work (see below).

\paragraph{Worst case complexity is sharp.} We begin this section by
providing a family of polynomial for which our complexity bounds are
reached.
\begin{xmp}\label{xmp:hensel-sharp}
  Let $d>3$ be divisible by $2$ and consider $F=Y^d+(Y-X^{d/2})^2$, so
  that $\dx=\dy=\dt=d$. By Hensel's lemma, we have
  $F=G\,H\in \Qi[[X]][Y]$ with $G(0,Y)=Y^{d-2}+1$ and $H(0,Y)=Y^2$. As
  $G(0,Y)$ is square-free, we deduce immediately the singular parts of
  the Puiseux series of $G$ (that is, their constant term here). In
  order to compute the singular parts of (at least half) the Puiseux
  series of $H$ above $0$ using algorithm \rnp3{}, we need to lift
  further the factorisation $F=G\,H\mod X$ up to precision
  $\sigma\in \Theta(\vRF[H]/\deg_Y(H))$, and this precision is sharp
  from Lemma \ref{lem:tronc}. We have $\vRF[H]=\vRF=d^2$ while
  $\deg_Y(H)=2$ is constant. Hence the required precision is in
  $\Theta(d^2)$ and the lifting step costs $\Theta(d^3)=\Theta(D^3)$,
  leading to a cubic complexity in the total degree.
\end{xmp}

\paragraph{Irreducibility test via \arnp{} is $\Omega(\dy\,\vRF)$.} The previous
example shows the sharpness of the divide and conquer strategy. But
even the first step (algorithm \hrnp{}) is sharp, due to the ``blowing
up'' of the Puiseux transform. As a consequence, even for an
irreducible polynomial (where there is no need of the divide and
conquer strategy), complexity of Theorem \ref{thm:puidV} is sharp, as
shows the following example:
\begin{xmp}\label{xmp:irr-sharp}
  Let $d>12$ be divisible by $4$ and consider $F$ to be the minimal
  polynomial of the Puiseux series
  $S(X)=X^{\frac 4 d}+X+X^{\frac{d+1}d}$. We have $\dy=d$, $\vRF=7\,d-13$
  and $\val(F_Y(S))=7-\frac{13}{d}$, and Lemma \ref{lem:tronc} proves
  that we need to consider $\tronc F n$ with
  $n={8-\frac{d}{12}}>\frac\vRF d$, i.e. $F\mod X^8$. We have
  $\Nn(F)=((0,4),(d,0))$ with characteristic polynomial $(T-1)^4$, so
  that $m_1=1$, $q_1=\frac d 4$ and $l_1=d$. We therefore need to
  compute the Puiseux transform
  $G(X,Y)=\tronc{F(X^{\frac d 4},X\,(Y+1))/X^d}{n_1}$ with
  $n_1=\frac d 4\,n-d=d-3$. As $G$ has size
  $d\,n_1\in\Omega(\dy\,\vRF)$, so is the complexity of Lemma
  \ref{lem:onesubs}, thus of Theorem \ref{thm:puidV}.
\end{xmp}
As a consequence, this blowing-up step prevents any Newton--Puiseux
like method for providing an irreducibility test in $\Ki[[X]][Y]$ (or
$\algclos\Ki[[X]][Y]$) in $\Ot(\vRF)$ operations in $\Ki$. We plan to
investigate the approach of Abhyankhar \cite{Ab89} to improve that
point; in particular, we hope such an approach to improve
the practical implementation of the algorithm.

\paragraph{The reverse role strategy.} If we only want Puiseux series
centered at $(0,0)$, we can try to invert the roles played by $X$ and
$Y$: thanks to the inversion formula \cite[Proposition 4.2]{GaGoPo17},
we can recover the singular parts of the Puiseux series of $F$
centered at $(0,0)$ with respect to $Y$ from those of
$\Ft(X,Y)=F(Y,X)$.

Considering Example \ref{xmp:hensel-sharp}, the polynomial
$\Ft\in \Ki[[X]][Y]$ is then Weierstrass of degree $d$. One can
compute $\vRF_{\Ft}=d^2+2\,(d-1)$. Hence, we need a lifting precision
$\tilde{\sigma}\in\Theta(\vRF_{\Ft}/d)=\Theta(d)$ in order to compute
at least half of the Puiseux series of $\Ft$, for a total cost
$\Theta(d^2)$. As $\Ft$ has edge polynomial $(Y^{d/2}-X)^2$, we deduce
that we will in fact separate the singular parts of \emph{all} Puiseux
series of $\Ft$ with precision $\tilde{\sigma}$ - recovering then
those of $F$ by appyling the inversion formula - for a total quadratic
cost $\Theta(d^2)=\Theta(D^2)$ assuming that we may apply the
inversion formula within this bound.

\begin{rem}
  We did not check that applying the inversion formula really fits in
  the aimed bound. This problem is closely related to the computation
  of the reciprocal series of a serie $S\in X\Ki[[X]]^*$, that is the
  series $\tilde{S}\in X\Ki[[X]]^*$ such that $S\circ \tilde{S}=X$. We
  did not pursue further this investigation as Example
  \ref{xmp:milnor} below shows that the reverse role strategy fails in
  general - even assuming fast inversion formula. At minima,
  \cite[Theorem 4.4]{GaGoPo17} shows that computing the
  \emph{characteristic monomials} of the Puiseux series of $F$
  centered at $(0,0)$ assuming that those of $\tilde{F}$ are given
  fits in the aimed bound. This data is of particular importance as it
  allows to compute the topological type of the branches of the germ
  of curve defined by $F$ at $(0,0)$.
\end{rem}

We could hope that there is always such a nice way to choose a
suitable system of local coordinates in order to compute all the
Puiseux series centered at $(0,0)$ - or at least their characteristic
monomials - in less than cubic complexity in the total
degree. Unfortunately, Example \ref{xmp:milnor} below shows that this
is hopeless. With the notations above, we have $\vRF[H]=\mu + n_Y-1$
and $\vRF[\Ht]=\mu+n_X-1$ thanks to \cite[Chapter II, Proposition 1.2,
page 317]{Te73}, with $n_Y:=\deg_Y(H)=\val[Y](F(0,Y))$,
$n_X:=\deg_Y(\Ht)=\val(F(X,0))$ and $\mu:=(F_X,F_Y)_0$ the Milnor
number of the germ of curve defined by $F$ at the origin. Thanks to
the inversion formula, computing (the characteristic monomials of) at
least half of the Puiseux series \textit{centered at $(0,0)$} with
\rnp3{} while allowing the reverse role strategy costs
$\Theta(\mu\min (\dy/n_Y,\dx/n_X))$. Unfortunately, this can be
$\Theta(\dt^3)$:
\begin{xmp} \label{xmp:milnor} Let $d>6$ be divisible by $6$ and let
  $F=(\phi+X^{d/2})^2-\phi^{d/3}$ with $\phi=Y^3-X^2$. So $F$ has
  total degree $\dt=d$. We have
  $F_X=X\left((d\,X^{d/2-1}-4)\,(\phi+X^{d/2})+\frac
    {2d}3\,\phi^{d/3-1}\right)$
  and $F_Y=Y^2\left(6\,(\phi+X^{d/2})-d\,\phi^{d/3-1}\right)$.  As
  $d\geq 12$, we have $(X,6\,(\phi+X^{d/2})-d\,\phi^{d/3-1})_0=3$,
  $(Y,U\,(\phi+X^{d/2})+\frac {2d}3\,\phi^{d/3-1})_0=2$. We also have
  \[
  \begin{array}{rl}
    &\left((3\,d\,X^{d/2-1}-12)\,(\phi+X^{d/2})+2\,d\,\phi^{d/3-1},
      6\,(\phi+X^{d/2})-d\,\phi^{d/3-1}\right)_0\\%
    =&\left(3\,d\,X^{d/2-1}\,(\phi+X^{d/2}),6\,(\phi+X^{d/2})-d\,\phi^{d/3-1}\right)_0\\%
    =&3\,(d/2-1)+\left((\phi+X^{d/2}),\phi^{d/3-1}\right)_0=-3+d^2/2\\%
  \end{array}
  \]
  We finally get $\mu=(F_X,F_Y)_0=6+d^2/2\in\Theta(d^2)$. Since
  $n_Y=6$ and $n_X=4$ we obtain
  $\min
  (\dy\mu/n_Y,\dx\mu/n_X)=d^3/12+d\in\Theta(d^3)=\Theta(D^3)$.
  The reverse role strategy is thus not helpful in that case.
\end{xmp}

More generally the Milnor number is invariant under local
diffeomorphic change of coordinates $\pi:(\Ki^2,0)\to (\Ki^2,0)$. In
Example \ref{xmp:milnor}, we can check that we always have
$\max (n_X(\pi^* F),n_Y(\pi^* F))=\max(n_X,n_Y)$, and - assuming
$\pi$ polynomial - we check further
that we always have
$\min (\deg_X(\pi^* F),\deg_Y(\pi^* F))\ge \min(\dx,\dy)$. Hence, there is
no hope to reduce the polynomial $F$ to a nicer polynomial $G$ having
faster desingularisation at $(0,0)$ (or even faster irreducibility test)
using polynomial diffeomorphism of $(\Ki^2,0)$ before appyling
\rnp3{}. This shows that our complexity results are sharp, and so
independently of the choice of a polynomial local change of
coordinates in $(\Ki^2,0)$.

Note that this example is particularly sparse, but one could for
instance consider the ``dense'' polynomial
$F=Y^{d/3}+\sum_{k=0}^{d/6-1}(\phi+X^{d/2})^2\,\phi^k$ that will lead
to the same conclusion than the one of Example \ref{xmp:milnor}.


\bibliographystyle{abbrv}{\bibliography{tout}}
\addcontentsline{toc}{section}{References.}

\end{document}